\definecolor{jan}{rgb}{0.0,0.3,0.8}
\def\tand{\quad{\rm and}\quad}
\DeclarePairedDelimiter{\norma}{\lvert}{\rvert} 
\DeclarePairedDelimiter{\normad}{\lVert}{\rVert} 
\DeclarePairedDelimiter{\abs}{\lvert}{\rvert}
\DeclarePairedDelimiter{\tond}{(}{)} 
\DeclarePairedDelimiter{\quadr}{[}{]}
\DeclarePairedDelimiter{\graf}{\{}{\}} 
\DeclarePairedDelimiter{\scal}{\langle}{\rangle}
\DeclareMathOperator*{\argmin}{arg\,min}
\DeclareMathOperator*{\hess}{\mathrm{D}^2}
\DeclareMathOperator*{\law}{law}
\newcommand{\numberset}{\mathbb}
\newcommand{\N}{\numberset{N}}
\newcommand{\R}{\numberset{R}}
\renewcommand{\epsilon}{\varepsilon}
\newcommand{\dive}{\mathbf{\nabla} \cdot}
\newcommand{\dom}[1]{\mathrm{dom}(#1)}
\newcommand{\pp}{\mathcal{P}}
\newcommand{\expe}[1]{\mathbb{E}\bigl[#1\bigr]}
\newcommand{\kldiv}[2]{\mathcal{D}_{\mathrm{KL}}\tond*{#1\,\middle\|\,  #2}}
\newcommand{\fistwo}[2]{\mathcal{I}_2\tond*{#1\,\middle\|\,  #2}}
\newcommand{\fisinf}[2]{\mathcal{I}_\infty\tond*{#1\,\middle\|\,  #2}}
\newcommand{\fisp}[2]{\mathcal{I}_p\tond*{#1\,\middle\|\,  #2}}
\DeclareMathOperator{\supp}{supp}
\newcommand{\indic}{\mathds{1}}
\newcommand{\dd}{{\, \mathrm{d}}}
\newcommand{\ddd}{{\mathrm{d}}}
\newcommand{\ddt}{\frac{\mathrm{d}}{\mathrm{d}t}}
\newcommand{\ip}[1]{\langle {#1}\rangle}   
\newcommand{\bip}[1]{\big\langle {#1}\big\rangle}   
\numberwithin{equation}{section}
\theoremstyle{plain}
\newtheorem{theorem}{Theorem}[section]
\newtheorem{proposition}[theorem]{Proposition}
\newtheorem{lemma}[theorem]{Lemma}
\newtheorem{corollary}[theorem]{Corollary}
\newtheorem{definition}[theorem]{Definition}
\newtheorem*{defs*}{Definition}
\newtheorem*{remark}{Remark}
\newtheorem*{ass*}{Assumption}
\newcommand{\sP}{\mathsf{P}}
\newcommand{\bfF}{\mathbf{F}}
\newcommand{\bfV}{\mathbf{V}}
\newcommand{\cC}{\mathcal{C}}
\newcommand{\cF}{\mathcal{F}}
\newcommand{\cR}{\mathcal{R}}
\newcommand{\cS}{\mathcal{S}}
\newcommand{\cT}{\mathcal{T}}
\newcommand{\hcT}{\widehat{\mathcal{T}}}
\def\BS{\boldsymbol}
\def\bflambda{{\BS\lambda}}
\newcommand{\E}{{\mathbb E}}
\renewcommand{\P}{{\mathbb P}}
\newcommand{\qbr}{\overline{B_R}\times \overline{B_R} }
\newcommand{\clobal}{\overline{B_R}}
\DeclareMathOperator*{\esssup}{ess\,sup}
\title{$L^\infty$-optimal transport of anisotropic log-concave measures and exponential convergence in Fisher's infinitesimal model}
\author{Ksenia A.~Khudiakova
	\qquad Jan Maas 
	\qquad Francesco Pedrotti \thanks{Institute of Science and Technology Austria (ISTA),
		Am Campus 1, 3400 Klosterneuburg, Austria \quad \\ (
		\texttt{kseniia.khudiakova@ist.ac.at}	
		\texttt{jan.maas@ist.ac.at}
		\texttt{francesco.pedrotti@stat.math.ethz.ch}
		)}}
\begin{document}

\maketitle
\begin{abstract}
	We prove upper bounds on the $L^\infty$-Wasserstein distance from optimal transport between strongly log-concave probability densities and log-Lipschitz perturbations. 
In the simplest setting, such a bound amounts to a transport-information inequality involving the $L^\infty$-Wasserstein metric and the relative $L^\infty$-Fisher information. 
We show that this inequality can be sharpened significantly in situations where the involved densities are anisotropic. 
Our proof is based on probabilistic techniques using Langevin dynamics. 
As an application of these results, we obtain sharp exponential rates of convergence in Fisher’s infinitesimal model from quantitative genetics, generalising recent results by Calvez, Poyato, and Santambrogio in dimension 1 to arbitrary dimensions.
	\end{abstract}
    
\tableofcontents
	
\section{Introduction}

	Upper bounds on transport distances to log-concave probability densities play a central role in the theory of optimal transport and in applications in high-dimensional geometry and probability.
	
	One fundamental example is \emph{Talagrand's inequality} \cite{Talagrand:1996}, which provides a remarkable upper bound for the $2$-Wasserstein distance to the standard Gaussian measure $\gamma$. For all probability measures $\nu$ having finite relative entropy 
	$
		\kldiv{\nu}{\gamma} 
		= 
		\int 
			\log \frac{\ddd \nu}{\ddd \gamma}(x)
		\dd \nu(x),
	$
	Talagrand's inequality asserts that
		$W_2(\nu, \gamma) \leq
		\sqrt{2 \kldiv{\nu}{\gamma} }$.
	More generally, Otto and Villani \cite{ott-vil-2000} showed that 
	\begin{align}
		\label{eq:Otto-Villani}
		W_2(\nu, \mu) 
		\leq
		\sqrt{\frac{2}{\kappa}\kldiv{\nu}{\mu} }
	\end{align}
	for all $\nu$, whenever $\mu$ satisfies a logarithmic Sobolev inequality with constant $\kappa > 0$.
	This includes in particular the class of all $\kappa$-log-concave densities.
	(A probability density $\mu$ is said to be 
		\emph{$\kappa$-log-concave} for some $\kappa \in \R$, 
		if $\mu = e^{-U}$ where 
			$U : \R^d \to \R \cup \{+ \infty\}$ 
		is \emph{$\kappa$-convex}; 
		i.e.,~$x \mapsto U(x) - \frac{\kappa}{2}|x|^2$ is convex.)
	The main reason for the great interest of this inequality is that it implies dimension-free Gaussian concentration for $\mu$.
	
	Another seminal result of a similar flavour is 
		\emph{Caffarelli's contraction theorem} \cite{Caffarelli:2000},
	which asserts that 
	any $1$-log-concave probability density $\mu$ can be obtained as the image  (or push-forward) of the standard Gaussian measure $\gamma$ under a $1$-Lipschitz map $T : \R^d \to  \R^d$.
	In fact, the optimal transport map for the $W_2$-distance (the so-called Brenier map) does the job.
	This theorem is a powerful tool to transfer functional inequalities from the Gaussian measure to the large class of $1$-log-concave measures.

	\subsection{ \texorpdfstring{$L^\infty$}{}-optimal transport of log-concave densities}

	This paper deals with yet another class of bounds 
	on the transport distance to a log-concave reference density, 
	involving the transport distance $W_\infty$
	instead of the more common distance $W_2$.
	For probability measures
	$\mu, \nu$ on $\R^{d}$,
	$W_{\infty}(\mu, \nu)$ can be defined in probabilistic terms by
	\begin{align*}
		W_{\infty}(\mu, \nu)
		= 
		\inf_{X,Y} 
		\Big \{
		\esssup_{\omega \in \Omega} | X(\omega) - Y(\omega) |
		\Big\} \, ,
	\end{align*}
	where the infimum runs over 
	all $\R^{d}$-valued random vectors 
	$X$ and $Y$
	defined on the same probability space 
	$(\Omega, \cF, \P)$
	with
	$\law(X) = \mu$ and $\law(Y) = \nu$.

	Our goal is to obtain quantitative bounds on the transport distance $W_\infty(\mu,\nu)$
	to a log-concave reference density $\mu$ for a large class of measures. 
	The following prototypical example is the simplest special case of our main result; see Corollary \ref{cor:log-lip-perturb-coupl} below.
	Under more restrictive assumptions, this bound was recently obtained in \cite{gan-tha-upa-2023},  \cite[Lem.~3.6] {aga-kal-kar-2023} and in \cite[Prop.~3.1]{Calvez-Poyato-Santambrogio:2023}.

\begin{proposition}
	\label{prop:simple}
	Let 
		$\mu$ and $\nu$ 
	be probability densities on $\R^d$.
	Suppose that  
		$\mu$ is $\kappa$-log-concave for some $\kappa > 0$,
	and that $\nu = e^{-H} \mu$,
	where $H \in C(\R^d)$ is
		$L$-Lipschitz for some $L < \infty$. 
	Then: 
	\begin{align}
		\label{eq:simple}
	W_{\infty} (\mu,\nu) \le \frac{L}{\kappa} \, .
	\end{align}
\end{proposition}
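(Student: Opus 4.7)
The plan is to construct the desired $W_\infty$-coupling of $\mu$ and $\nu$ via a synchronous Langevin coupling. Writing $\mu = e^{-U}$ with $U$ being $\kappa$-convex, and subtracting from $H$ the constant $\log \int e^{-H}\,\dd\mu$ (which preserves its $L$-Lipschitz property), I may write $\nu = e^{-(U+H)}$. I would then introduce the coupled Langevin SDEs
\begin{align*}
	dX_t &= -\nabla U(X_t)\,dt + \sqrt{2}\,dB_t,\\
	dY_t &= -\nabla (U+H)(Y_t)\,dt + \sqrt{2}\,dB_t,
\end{align*}
driven by the \emph{same} Brownian motion $B$, with $X_0 \sim \mu$ and $Y_0 \sim \nu$ coupled arbitrarily (say, independently). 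Since $\mu$ and $\nu$ are invariant for the respective dynamics, $\law(X_t)=\mu$ and $\law(Y_t)=\nu$ for every $t \ge 0$.

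Setting $Z_t := X_t - Y_t$, the Brownian increments cancel, and the $\kappa$-convexity of $U$ together with $|\nabla H|\le L$ yield the contraction estimate
\begin{align*}
\tfrac{d}{dt}|Z_t|^2 = -2\langle Z_t,\nabla U(X_t)-\nabla U(Y_t)\rangle + 2\langle Z_t,\nabla H(Y_t)\rangle \le -2\kappa|Z_t|^2 + 2L|Z_t|,
\end{align*}
whence $\tfrac{d}{dt}|Z_t| \le -\kappa |Z_t| + L$, and Gr\"onwall gives $|X_t - Y_t| \le e^{-\kappa t}|X_0 - Y_0| + (1-e^{-\kappa t})L/\kappa$. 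Let $\pi_t$ denote the law of $(X_t, Y_t)$; each $\pi_t$ is a coupling of $(\mu,\nu)$, so the family $\{\pi_t\}_{t\ge 0}$ is tight, and for every $\epsilon > 0$,
\begin{align*}
\pi_t\bigl(\{|x-y| > L/\kappa + \epsilon\}\bigr) \le \P\bigl(|X_0-Y_0| > e^{\kappa t}\epsilon\bigr) \xrightarrow[t\to\infty]{} 0.
\end{align*}
Extracting a weakly convergent subsequence $\pi_{t_k} \weakly \pi_\infty$, the open set $\{|x-y| > L/\kappa + \epsilon\}$ has $\pi_\infty$-mass bounded by $\liminf_k \pi_{t_k}$ of the same set, hence equal to zero; sending $\epsilon \downarrow 0$ produces a coupling of $(\mu,\nu)$ essentially supported in $\{|x-y| \le L/\kappa\}$, which gives $W_\infty(\mu,\nu) \le L/\kappa$.

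The delicate step is the regularity needed to make the synchronous-coupling computation rigorous: with $U$ only $\kappa$-convex and $H$ only Lipschitz, $\nabla U$ and $\nabla H$ may exist only almost everywhere, and the well-posedness of the SDEs and the invariance of $\mu,\nu$ are not automatic. The natural remedy is to mollify $U$ and $H$ (which preserves $\kappa$-convexity and the Lipschitz constant respectively), run the argument above for the smooth drifts, and then pass to the limit in the mollification parameter using the lower semicontinuity of $W_\infty$ under weak convergence of couplings. This is precisely the technical circle of ideas that the authors handle in greater generality in their main theorem (Corollary~\ref{cor:log-lip-perturb-coupl}), of which Proposition~\ref{prop:simple} is the isotropic specialisation.
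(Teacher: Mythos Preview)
Your proposal is correct and uses essentially the same synchronous Langevin coupling as the paper (see the proof of Theorem~\ref{thm:main-abstract-coupl}, of which Corollary~\ref{cor:log-lip-perturb-coupl} is the isotropic specialisation). The only notable difference is the choice of initial condition: the paper takes $X_0 = Y_0 \sim \nu$, so that $|Z_0|=0$ and the bound $|Z_t| \le L/\kappa$ holds for all $t$ directly from the differential inequality, after which one lets $\law(X_t)\to\mu$ and invokes lower semicontinuity of $W_\infty$; your stationary initialisation with independent $(X_0,Y_0)$ followed by Gr\"onwall and extraction of a weakly convergent subsequence of couplings is an equally valid variant.
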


This bound is sharp, as can be seen by considering two shifted isotropic Gaussian measures.

Proposition \ref{prop:simple} can also be formulated as a 
functional inequality involving the 
\emph{$L^\infty$ relative Fisher information}
$\fisinf{\nu}{\mu}$ defined by
\begin{align*}
	\fisinf{\nu}{\mu} 
		= 
	\Bigl\| 
		\nabla \log \Bigl(\frac{\ddd \nu}{\ddd \mu} \Bigr)
	\Bigr\|_{L^\infty(\R^d, \mu)}
\end{align*}
for sufficiently regular densities $\nu \ll \mu$.
Indeed, Proposition \ref{prop:simple} asserts that
any probability density $\mu \in L^1_+(\R^d)$ that is 
$\kappa$-log-concave for some $\kappa > 0$
satisfies the 
	\emph{$L^\infty$ transport-information inequality} 
\begin{align*}
	W_\infty(\mu,\nu) \le \frac{1}{\kappa} \fisinf{\nu}{\mu} 
\end{align*}
for all sufficiently regular probability densities $\nu$. 
This inequality can be viewed as an $L^\infty$-analogue 
of well known $L^2$-based transport-information inequalities; see Section \ref{sec:transport-bounds} for more details.

\medskip

One of the main contributions of this paper is the insight that the estimate \eqref{eq:simple} can be improved significantly
when the involved probability densities are anisotropic.
Anisotropic densities are ubiquitous in applications, e.g., when densities are concentrated near a lower-dimensional manifold.
To formulate the improved estimate, it will be convenient to introduce some more notation.

Let $K \in \R^{d \times d}$ be a symmetric matrix.
A function $U : \R^d \to \R \cup \{+\infty\}$ is \emph{$K$-convex} if 
$x \mapsto U(x) - \frac12 \ip{x, Kx}$ is convex.
If $U \in C^2(\R^d)$, 
then $U$ is $K$-convex 
if and only if
	$\hess U(x) \succcurlyeq K$
for all $x \in \R^d$.
A function $\mu \in L_+^1(\R^d) \setminus \{0\}$ is said to be \emph{$K$-log-concave} if $\mu = e^{-U}$ for some $K$-convex function $U$.
The special case $K = \kappa I_d$ corresponds to the notions of $\kappa$-convexity and $\kappa$-log-concavity introduced above.
If $K = 0$, we recover the usual notions of convexity and log-concavity.

\smallskip

Let $A$ and $B$ be orthogonal subspaces satisfying $A \oplus B = \R^n$, and let $\sP_A$ and $\sP_B$ denote the corresponding orthogonal projections. 
The following result (see Corollary \ref{cor:directional} below) is a generalisation of Proposition \ref{prop:simple}, capturing different behaviour of the involved measures on the subspaces $A$ and $B$.
In the special case where $A = \R^n$ and $B = \emptyset$ we recover \eqref{eq:simple}.

\begin{theorem}
\label{thm:main-directional}
Let 
		$\mu$ and $\nu$ 
	be probability densities on $\R^d$.
Suppose that  
	$\mu$ is $K$-log-concave 
where 
	$K = \kappa_A \sP_A + \kappa_B \sP_B$ 
	for some $\kappa_A, \kappa_B > 0$,
and that $\nu = e^{-H} \mu$,
	with $H \in C(\R^d)$ satisfying,
	for some $L_A < \infty$,
	\begin{align*}
		\norma*{H(x)-H(y)}
			\leq 
		L_A |\sP_A (x-y) |
		\quad \text{for all }
		x, y \in \R^d \, .
	\end{align*}
Then:
\begin{equation*}
	W_{\infty}(\mu, \nu) \leq 
	\begin{cases}
		\frac{L_A}{\kappa_A} 
		&
		\text{ if } 
		\kappa_A \leq 2 \kappa_B \, ,
		\\
		\frac{L_A}
		{2 \sqrt{\kappa_B(\kappa_A - \kappa_B)}} 
		&
		\text{ if } 
		\kappa_A \geq 2 \kappa_B\, .
	\end{cases}
\end{equation*}
\end{theorem}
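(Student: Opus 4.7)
The plan is to use a synchronous coupling of the overdamped Langevin dynamics associated with $\mu$ and $\nu$: if both processes are driven by the \emph{same} Brownian motion, the noise cancels in the difference, so $X_t-Y_t$ evolves along a deterministic (path-wise) vector field for which $K$-convexity of $-\log\mu$ and the directional Lipschitz bound on $H$ produce an explicit contraction. The bound on $W_\infty(\mu,\nu)$ will then arise as the radius of the smallest Euclidean ball that is forward-invariant and attracting for this flow.

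Writing $\mu = e^{-U}$ and $\nu = e^{-(U+H)}$ (after a standard mollification that preserves the hypotheses up to arbitrarily small error), I consider the coupled SDEs
\[
	d X_t = -\nabla U(X_t)\,dt + \sqrt{2}\,dB_t,
	\qquad
	d Y_t = -\nabla U(Y_t)\,dt - \nabla H(Y_t)\,dt + \sqrt{2}\,dB_t,
\]
with $X_0\sim\mu$ and $Y_0\sim\nu$ arbitrarily coupled (say independent), both driven by the \emph{same} $B$. Each marginal equation is stationary, so $X_t\sim\mu$ and $Y_t\sim\nu$ for every $t\ge 0$. The Brownian motions cancel in $Z_t := X_t-Y_t$, which satisfies the random ODE
\[
	\dot Z_t = -\bigl(\nabla U(X_t)-\nabla U(Y_t)\bigr) + \nabla H(Y_t).
\]
The directional Lipschitz hypothesis forces $\nabla H(y)\in A$ and $|\nabla H(y)|\le L_A$ for every $y$, while the $K$-convexity of $U$ gives $\langle Z,\nabla U(X)-\nabla U(Y)\rangle \ge \kappa_A |\sP_A Z|^2 + \kappa_B |\sP_B Z|^2$. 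Writing $a_t = \sP_A Z_t$ and $b_t = \sP_B Z_t$, this yields
\[
	\ddt |Z_t|^2 \le -2\kappa_A |a_t|^2 - 2\kappa_B |b_t|^2 + 2 L_A |a_t|.
\]

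The heart of the argument is then a small optimisation: find the smallest $M>0$ such that the right-hand side above is $\le 0$ on the sphere $\{|a|^2+|b|^2 = M^2\}$, for then the closed ball $\{|Z|\le M\}$ is forward-invariant and, by strict inequality for $|Z|>M$, attracting. Substituting $|b|^2 = M^2 - r^2$ with $r = |a|\in[0,M]$, the condition reduces to
\[
	(\kappa_A-\kappa_B)\, r^2 - L_A\, r + \kappa_B M^2 \ge 0
	\quad \text{for all } r\in[0,M].
\]
A case analysis on the position of the unconstrained critical point $r^\star = L_A/(2(\kappa_A-\kappa_B))$ relative to $M$ produces exactly the threshold $\kappa_A = 2\kappa_B$ of the theorem: when $\kappa_A\le 2\kappa_B$ the constrained minimiser sits at the endpoint $r=M$, and the binding constraint is $M\ge L_A/\kappa_A$; when $\kappa_A\ge 2\kappa_B$ it sits at $r=r^\star\in[0,M]$, and the binding constraint is $M\ge L_A/(2\sqrt{\kappa_B(\kappa_A-\kappa_B)})$. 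The subcases $\kappa_A\le\kappa_B$ (concave or linear quadratic) are checked directly by evaluating at the endpoints of $[0,M]$ and give the same first value.

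It remains to pass from the pathwise contraction to an actual coupling of $\mu$ and $\nu$. By a Gronwall argument on the dominant branch of the differential inequality, $\limsup_{t\to\infty}|X_t-Y_t|\le M$ almost surely. Since $\law(X_t,Y_t)$ has fixed marginals $(\mu,\nu)$ and is therefore tight, any subsequential weak limit as $t\to\infty$ is a coupling of $\mu$ and $\nu$; combined with closedness of the sets $\{|x-y|\le M+\epsilon\}$, this limit is concentrated on $\{|x-y|\le M\}$, giving $W_\infty(\mu,\nu)\le M$. The step I expect to be the real obstacle is the quadratic optimisation of the previous paragraph: it is precisely the place where the matching between the two subspaces $A$ and $B$ is performed and where the improvement over Proposition \ref{prop:simple} genuinely comes from. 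The surrounding technical points — mollification of $U$ and $H$, well-posedness of the coupled SDE, and the extraction of the limiting coupling — are standard.
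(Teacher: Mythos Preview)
Your approach is essentially the paper's: synchronous Langevin coupling for $\mu$ and $\nu$, the same differential inequality for $|Z_t|^2$, and the same two-case optimisation (the paper packages the coupling as a general criterion, Theorem \ref{thm:main-abstract-coupl}, and then specialises to $K=\kappa_A\sP_A+\kappa_B\sP_B$ and $\ell(z)=L_A|\sP_A z|$ in Corollary \ref{cor:directional}). The one notable simplification in the paper is the initialisation $X_0=Y_0\sim\nu$, so that $Z_0=0$ and the elementary fact $\sup h=\sup\{h(t):h'(t)\ge0\}$ gives $|Z_t|\le M$ for \emph{all} $t$; since $Y_t\equiv\nu$ is stationary and $\law(X_t)\to\mu$ weakly, lower semicontinuity of $W_\infty$ finishes immediately, bypassing your Gronwall/attracting-ball step and the extraction of a subsequential limit coupling.
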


In the regime $1 \leq \frac{\kappa_A}{\kappa_B} \leq 2$, 
observe that the constants in the denominator 
depends only on the directional log-concavity constant $\kappa_A$, and not on the uniform log-concavity constant $\kappa_B$. 

Proposition \ref{prop:simple} and 
Theorem \ref{thm:main-directional}
will be proved as corollaries to a general criterion
	(Theorem \ref{thm:main-abstract-coupl}).
The proof is based on a probabilistic argument using careful estimates for Langevin dynamics for $\mu$ and $\nu$. 

While our main results are general, our investigation is partly motivated by applications to the long-term behaviour of Fisher's infinitesimal model from quantitative genetics, as will be discussed in Section \ref{sec:Fisher}. 
The improvement of Theorem \ref{thm:main-directional} over Proposition \ref{prop:simple} is crucial to obtain sharp rates of convergence in this model, as we will discuss below.

\subsection{Application to Fisher's infinitesimal model}
\label{sec:Fisher}

Fisher's infinitesimal model from quantitative genetics describes the distribution $F_n \in L^1_+(\R^d)$ of a $d$-dimensional trait $x \in \R^d$ in an evolving population at discrete times $n \in \N_0$.	
The trait distribution evolves according to the rule $F_{n+1} = \cT[F_n]$, where 	$\cT = \cS \circ \cR$
consists of a reproduction operator $\cR$ and a selection operator $\cS$ acting on $L^1_+(\R^d)$.
The reproduction operator $\cR$ is Fisher's infinitesimal operator given by
\begin{align*}
	\cR[F](x) 
	= 
	\int_{\R^{d} \times \R^{d}} 
	G\Big(x - \frac{x_1 + x_2}{2}\Big) 
	\frac{F(x_1) F(x_2)}{\|F\|_{L^1}}
	\dd x_1 \dd x_2
\end{align*} 
for $F \in L^1_+(\R^d)$ and $x \in \R^d$,
where $G(x) = (2\pi)^{-d/2} \exp(-|x|^2/2)$ is the standard Gaussian kernel on $\R^d$. We use the natural convention that $\cR[0] = 0$.
This operator describes sexual reproduction in a mean-field model where individuals mate independently and produce offspring whose traits are (isotropic) Gaussian centred at the average traits of their two parents.
The operator $\cR$ preserves the size of the population: $\| \cR[F]\|_{L^1} = \| F\|_{L^1}$ for all $F \in L_+^1(\R^d)$.
Selection effects are modelled using the multiplication operator $\cS$, which is given by
\begin{align*}
	\cS[F](x) 
	= e^{-m(x)} F(x)
\end{align*}
for a fixed mortality function $m : \R^d \to [0,\infty)$. This operator reflects the idea that individuals with certain traits have a higher survival probability than others. 
In this paper, $m$ will be strictly convex, which means that individuals with intermediate trait values have a higher survival probability. This is the regime of 
\emph{stabilising selection}.

Fisher's infinitesimal model was introduced in \cite{Fisher:1919} and explicitly formulated in \cite{Bulmer:1985}.
Though the model has been influential in quantitative genetics since it was proposed, 
it was proved only recently that the model emerges as a limit of models subject to the laws of Mendelian inheritance when the number of discrete loci tends to infinity \cite{Barton-Etheridge-Veber:2017}. 
We refer to \cite[Ch.~24]{Walsh:2018} for the biological background of various different infinitesimal models.

	\subsubsection*{Long-term behaviour}
	Significant recent progress has been obtained in understanding the long-term behaviour of the model as $n \to \infty$ under suitable assumptions on the mortality function $m$. 
	In particular, it is natural to ask whether there exists a (unique) probability distribution $\bfF$ that is \emph{quasi-invariant} in the sense that 
	$
	\cT[\bfF] = \bflambda \bfF
	$
	for some $\bflambda > 0$.
	Then one may ask whether the renormalised densities
	$F_n / \bflambda^n$
	converge to $\bfF$
	for a general class of initial probability distributions $F_0$, 
	and to quantify the speed of convergence using suitable metrics or functionals.
	
	A comprehensive investigation has been carried out in the special case of quadratic selection, namely $m(x) = \frac{\alpha}{2} |x|^2$ for some $\alpha > 0$ \cite{Calvez-Lepoutre-Poyato:2021}.
	In this situation, the model preserves the class of Gaussian distributions
	and it is shown that there exists a unique quasi-equilibrium $\bfF$, which is an explicit Gaussian distribution. 
	Moreover, the authors prove exponential convergence to $\bfF$ (in the sense of relative entropy) for general initial data. 
	
	The remarkable recent paper \cite{Calvez-Poyato-Santambrogio:2023} treats more general uniformly convex selection in dimension $1$. 
	Namely, under the assumption that $m : \R \to [0,\infty]$ satisfies  $m'' \geq \alpha$ for some $\alpha > 0$, the authors show the existence of a (non-explicit) $\beta$-log-concave quasi-equilibrium $\bfF$, without establishing its uniqueness.
	The parameter $\beta > \max\{\frac12, \alpha\}$ depends on $\alpha$ in an explicit way.
	Moreover, \cite{Calvez-Poyato-Santambrogio:2023} 
	uncovers a remarkable central role played by the 
	$L^\infty$ relative Fisher information.
	The authors show that the one-step contractivity estimate
	\begin{align}
		\label{eq:Fisher-contraction}
		\fisinf{\cT[F]}{\bfF}
		\leq 
			\big(\tfrac12 + \beta\big)^{-1}
		\, \fisinf{F}{\bfF}
	\end{align}
	holds for all $F \in L^1_+(\R^d)$. 
	This inequality immediately yields the exponential convergence bound
	$
	\fisinf{F_n}{\bfF}
	\leq 
	(\frac12 + \beta)^{-n}
	\fisinf{F_0}{\bfF}
	$
	for all initial distributions $F_0$ with $\fisinf{F_0}{\bfF} < \infty$. 
	Observe that the latter condition is a strong assumption on the initial datum $F_0$; e.g.,~if $G$ and $G'$ are 1-dimensional Gaussian distributions with different variances, then $\fisinf{G}{G'} = \infty$.

	\subsubsection*{Proof of the one-step contractivity}
	
	Let us briefly discuss the strategy of the proof of \eqref{eq:Fisher-contraction} from \cite{Calvez-Poyato-Santambrogio:2023}. 
	After proving the existence of a $\beta$-log-concave quasi-equilibrium $\bfF$, the authors consider the renormalised densities 
	$u_n := F_n / \bflambda^n \bfF$,
	which satisfy the recursive equation
	\begin{align*}
		u_{n+1}(x)
		=
		\int_{\R^{d} \times \R^{d}} 
		\frac{u_n(x_1) u_n(x_2)}{\|u_n \bfF\|_{L^1(\R^d)}}
		P(x_1, x_2;x)
		\dd x_1 \dd x_2 \, ,
	\end{align*}
	where $P(x_1, x_2; x)$ denotes the weighted transition rates from parental traits $(x_1, x_2)$ to a child with trait $x$.  
	These rates are given by 
	\begin{align}
		\label{eq:P-def}
		P(x_1, x_2;x)
		= 
		\frac{1}{Z(x)}
		\bfF(x_1) 
		\bfF(x_2)
		G\Big(x - \frac{x_1 + x_2}{2}\Big)\, ,
	\end{align}
	where $Z(x) = \int_{\R^{d} \times \R^{d}} \bfF(x_1) 
	\bfF(x_2)
	G\big(x - \frac{x_1 + x_2}{2}\big) \dd x_1 \dd x_2$ denotes the normalising constant which ensures that $P(\cdot;x)$ is a probability distribution on $\R^d \times \R^d$ for all $x \in \R^d$.
	
	The proof of the one-step contractivity estimate \eqref{eq:Fisher-contraction} relies on two key inequalities.
	Firstly, for all strictly positive initial data 
		$u_0 \in C^1(\R^d)$ 
	and all $x, \tilde x \in \R^d$,
	it is shown in 
		\cite[Lem.~2.4]{Calvez-Poyato-Santambrogio:2023} 
	that
	\begin{align}
		\label{eq:key-1}
		| \log u_1(x) - \log u_1(\tilde x) |
		\leq
		\big\| \nabla \log u_0 \big\|_{L^\infty(\R^d)} \,
		W_{\infty,1}\big( P(\cdot;x) , P(\cdot; \tilde x)  \big) \, .
	\end{align}
	Here, $W_{\infty,1}$ denotes the $\infty$-Wasserstein metric 
	over the base space $\R^{2d}$ endowed with the norm 
		$|(x_1, x_2)|_1 := |x_1| + |x_2|$, 
	with $|x_i|$ denoting 
		the Euclidean norm of 
		$x_i \in \R^d$ for $i = 1,2$. 
	While \eqref{eq:key-1} is stated in  \cite{Calvez-Poyato-Santambrogio:2023}
	for $d =1$, the proof extends verbatim to arbitrary dimensions.
	
	The second key inequality from \cite{Calvez-Poyato-Santambrogio:2023} 
	is a sharp bound on the $W_{\infty,1}$-distance appearing in the above inequality. 
	Namely, in the special case $d = 1$, it is shown that, for all $x, \tilde x \in \R$,
	\begin{align}
		\label{eq:key-2}
		W_{\infty,1}\big( P(\cdot;x) , P(\cdot; \tilde x)  \big)
		\leq 
		\big(\tfrac12 + \beta\big)^{-1}
		 \, | x - \tilde x | \, .
	\end{align}
	The inequalities \eqref{eq:key-1} and \eqref{eq:key-2} combined yield the crucial one-step contractivity inequality \eqref{eq:Fisher-contraction} for the $L^\infty$ relative Fisher information.

	However, 
	as pointed out in \cite[Rem.~1.6]{Calvez-Poyato-Santambrogio:2023}, 
	there are non-trivial obstacles that prevent an extension of the proof of \eqref{eq:key-2} to higher dimensions.
	The reason is that this proof employs the Brenier map (the optimal transport map for the $W_2$-distance), which satisfies the Monge-Amp\`ere equation. The required $L^\infty$-bound on the Brenier map between $P(\cdot;x)$ and $P(\cdot; \tilde x) \in L^1_+(\R^{2})$ is then obtained by using a maximum principle for the Monge-Amp\`ere equation in convex but not uniformly convex domains, exploiting recent progress on the regularity theory for the Monge-Amp\`ere equation in two-dimensional domains with special symmetries \cite{Jhaveri:2019}.

	\subsubsection*{Results}
	
    In this paper we obtain a sharp multi-dimensional version of \eqref{eq:key-2} by a completely different (probabilistic) method. 
	In fact, the (backward-in-time) transition kernels from different points $x, \tilde x\in\R^d$ have an intrinsic anisotropic nature, they are strongly log-concave, and they can be expressed as log-Lipschitz perturbations of each other.
	Therefore, we can derive the desired sharp bound from Theorem \ref{thm:main-directional}.
	Using the notation from above, we first establish the existence of a quasi-invariant distribution in the multi-dimensional setting.
	
	\begin{theorem}[Existence of a quasi-equilibrium]
		\label{thm:main-existence}
		Let $m \in C^1(\R^d)$ be $\alpha$-convex
		for some $\alpha > 0$.
		Then there exist
		$\bflambda \in (0,1)$ 
		and
		a probability density $\bfF \in L_+^1(\R^d)$ 
		such that 
		$\cT [\bfF] = \bflambda \bfF$.
		Moreover, $\bfF$
		is $\beta$-log-concave, where 
			$\beta > \max \bigl\{ \frac12, \alpha \bigr\}$ 
		satisfies $\beta = \alpha + \frac{\beta}{\frac12 + \beta}$.
	\end{theorem}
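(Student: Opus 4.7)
We construct the quasi-equilibrium as a fixed point of the normalised nonlinear operator $\hcT[F] := \cT[F]/\|\cT[F]\|_{L^1(\R^d)}$, and then set $\bflambda := \|\cT[\bfF]\|_{L^1(\R^d)}$. The starting point is a propagation-of-log-concavity estimate: if $F = e^{-U}$ is a $\beta$-log-concave probability density for some $\beta>0$, then $\cR[F] = e^{-U^\sharp}$ with $U^\sharp$ being $\tfrac{\beta}{1/2+\beta}$-convex. Indeed, writing
\begin{equation*}
	\cR[F](x)
	\propto \int_{\R^{2d}} \exp\bigl(-V(x,x_1,x_2)\bigr)\dd x_1 \dd x_2,
	\qquad
	V(x,x_1,x_2) := \tfrac{1}{2}\bigl|x - \tfrac{x_1+x_2}{2}\bigr|^2 + U(x_1)+U(x_2),
\end{equation*}
a direct computation of the Hessian of $V$ in the block form $(x\,;\,x_1,x_2)$ shows that its pointwise lower bound has Schur complement, after marginalising $(x_1,x_2)$, equal to $\tfrac{2\beta}{1+2\beta}I_d$. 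The Brascamp--Lieb inequality then yields $\nabla^2 U^\sharp \succeq \tfrac{2\beta}{1+2\beta}I_d$, and multiplication by $e^{-m}$ with $m$ being $\alpha$-convex gives that $\cT[F]$ is $\phi(\beta)$-log-concave with $\phi(\beta) := \alpha + \tfrac{\beta}{1/2+\beta}$. The scalar equation $\phi(\beta) = \beta$ becomes $\beta^2 - (\alpha+\tfrac12)\beta - \tfrac{\alpha}{2} = 0$, whose unique positive root $\beta^*$ satisfies $\beta^*(\beta^* - \alpha - \tfrac12) = \tfrac{\alpha}{2} > 0$, hence $\beta^* > \alpha + \tfrac12 > \max\{\tfrac12,\alpha\}$.

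The main difficulty is to produce a fixed point of $\hcT$ in the class of $\beta^*$-log-concave probability densities, which is \emph{not} convex under finite mixtures; Schauder's theorem therefore cannot be applied at the level of densities. Instead, one parameterises $F = e^{-U}$ by the potential $U$ and studies the induced map $\Phi: U \mapsto m + U^\sharp - \log\bflambda(U)$, where $\bflambda(U) := \|e^{-(m+U^\sharp)}\|_{L^1(\R^d)}$ enforces the normalisation. The class of $\beta^*$-convex functions is a convex cone, preserved by $\Phi$ by the first step. To obtain a compact invariant subset, one combines the quadratic lower bound $U(x) \geq U(x_\star) + \tfrac{\beta^*}{2}|x-x_\star|^2$ at the minimiser $x_\star$ of any $\beta^*$-convex $U$ with the critical-point equation $\nabla m(x_\star) + \nabla U^\sharp(x_\star) = 0$ satisfied at the minimiser of $\Phi(U)$: the $\alpha$-convexity of $m$ confines $x_\star$ to a ball $\overline{B_R}$ depending only on $\alpha$, $\beta^*$ and $m$. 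Fixing the normalisation $U(x_\star(U))=0$ and propagating a suitable polynomial upper bound along $\Phi$ yields a convex compact subset of $\beta^*$-convex functions in the $C^1_{\mathrm{loc}}$ topology (using that convex functions converging pointwise converge also in $C^1_{\mathrm{loc}}$ on the interior of the domain), invariant under the continuous map $\Phi$. Schauder--Tychonoff then furnishes a fixed point $U = \Phi(U)$, whence $\bfF := e^{-U}$ satisfies $\hcT[\bfF] = \bfF$.

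Setting $\bflambda := \|\cT[\bfF]\|_{L^1(\R^d)}$ yields $\cT[\bfF] = \bflambda\bfF$. The conservation $\|\cR[\bfF]\|_{L^1} = \|\bfF\|_{L^1} = 1$ follows directly from Fubini, and since $m \geq 0$ is strictly convex, $e^{-m} < 1$ on a set of positive $\cR[\bfF]$-measure, whence $\bflambda = \int e^{-m}\cR[\bfF] \in (0,1)$. The principal obstacle is the compactness argument of the previous paragraph: the nonconvexity of the set of log-concave densities forces the passage to potentials, where topology, centring, growth bounds, and uniform control of $\bflambda(U)$ along iterates must be set up carefully so that Schauder--Tychonoff can be applied in the infinite-dimensional setting.
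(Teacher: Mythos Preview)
Your approach is genuinely different from the paper's. The paper does \emph{not} use a Schauder-type fixed-point argument on potentials. Instead, it first localises to a ball $\overline{B_R}$ and iterates $\cT_R$ starting from a Gaussian; the key step is to show that the sequence $(\nabla\log F_n)_n$ is Cauchy in $C(\overline{B_R})$, which follows from the one-step contraction
\[
\fisinf{F_{n+1}}{F_n}\leq \bigl(\tfrac12+\beta\bigr)^{-1}\fisinf{F_n}{F_{n-1}}
\]
established via the paper's $W_\infty$-transport bound (Corollary~\ref{cor:main-W-infty}). This yields a localised quasi-equilibrium $\bfF_R$; one then proves a uniform second-moment bound on $\{\bfF_R\}_{R>0}$ (Proposition~\ref{prop:second-moment-bound}) and passes to a weak limit along $R_n\to\infty$. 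Crucially, the confinement argument in the paper uses the \emph{fixed-point identity} $\bfV_R=m_R+U_R+\mathrm{const}$, where $U_R=-\log\cR[\bfF_R]$: combining $\hess U_R\preccurlyeq I_d$ (from the Gaussian convolution) with the $\alpha$-convexity of $m$ and Lemma~\ref{lem:min-sum-conve-and-con-smooth} gives a bound on $\argmin\bfV_R$ in terms of $\argmin U_R$, and the self-consistency closes the loop.

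Your plan has a genuine gap precisely at the confinement step, which you yourself flag as the principal obstacle. The critical-point equation $\nabla m(x_\star)+\nabla U^\sharp(x_\star)=0$ only gives $\alpha|x_\star|\le|\nabla U^\sharp(x_\star)|\le|x_\star-\tilde y|$, where $\tilde y=\argmin U^\sharp$; this does \emph{not} bound $|x_\star|$ independently of $U$. To run Schauder you need an invariant ball, i.e.\ $|x_\star(U)|\le R\Rightarrow|x_\star(\Phi(U))|\le R$, but the one-step estimates above (or the variant $\tfrac{\alpha}{2}|x_\star|^2\le U^\sharp(0)-U^\sharp(x_\star)\le\tfrac12|\tilde y|^2$) only yield $|x_\star(\Phi(U))|\le c_\alpha\bigl(|x_\star(U)|+C\bigr)$ with $c_\alpha>1$ when $\alpha<1$, so no ball is invariant in that regime. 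The paper sidesteps this by (i) working on a bounded domain where no a~priori confinement is needed to get a fixed point, and (ii) proving the moment bound only \emph{after} one already has the fixed-point relation. If you want to salvage the Schauder route, you would need a genuinely new ingredient to get the a~priori invariance---for instance, tracking the mean through $\cR$ (which preserves it) and then quantifying how multiplication by $e^{-m}$ pulls the mean toward $0$; as written, the proposal does not supply this.
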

	
	The proof of this result adapts the arguments from  \cite{Calvez-Poyato-Santambrogio:2023}, 
	where the corresponding result was obtained for $d = 1$.  
	The key technical tool is the $L^\infty$-transport bound from 
		Theorem \ref{thm:main-directional}, 
	which yields a Cauchy property for a sequence of iterates, 
	and hence a candidate quasi-equilibrium. 
	The properties of the $L^\infty$ relative Fisher information require us to work first with a localised problem on a bounded domain, and subsequently identify a quasi-equilibrium for the original operator $\cT$ 
	by an approximation procedure.
	The extension of this argument to higher dimensions brings additional technicalities to deal with the boundedness of the domains and to show tightness of a sequence of quasi-equilibria.

	As Theorem \ref{thm:main-existence} yields the existence 
	of a quasi-equilibrium $\bfF$,
	we can define the weighted transition kernels $P(\cdot;x)$ 
	by 
	$P(x_1, x_2;x)
		= 
		\frac{1}{Z(x)}
		\bfF(x_1) 
		\bfF(x_2)
		G\big(x - \frac{x_1 + x_2}{2}\big)$
	as in \eqref{eq:P-def}, 
	where $Z(x)$ denotes a normalising constant.
	Using Theorem \ref{thm:main-directional} we obtain the following $d$-dimensional generalisation of \eqref{eq:key-2}.
	
	\begin{theorem}[$W_\infty$-contractivity]
		\label{thm:main-W-infty}
		Let $m \in C^1(\R^d)$ be $\alpha$-convex 
		for some $\alpha > 0$.
		Then:
		\begin{align*}
			W_{\infty}
			\bigl( 
				P(\cdot; x) , P(\cdot; \tilde x)
				\bigr) 
				\le 
				2^{-1/2}
				\big(\tfrac12 + \beta\big)^{-1}
				| x - \tilde x|
				\,
		\end{align*}
		for all $x, \tilde x\in \R^d$,
		where $\beta > 
		\max\{\frac12,\alpha\}$ satisfies $\beta = \alpha + \frac{\beta}{\frac12 + \beta}$.
	\end{theorem}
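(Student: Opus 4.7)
The plan is to apply the directional transport bound of Theorem \ref{thm:main-directional} to the two densities $P(\cdot;x)$ and $P(\cdot;\tilde x)$ on $\R^{2d}$, choosing as $A$ the diagonal subspace $\{(v,v) : v \in \R^d\}$ and as $B$ the anti-diagonal subspace $\{(v,-v) : v \in \R^d\}$. The reason this choice is natural is that the Gaussian mating kernel $G(x - (x_1+x_2)/2)$ depends on $(x_1,x_2)$ only through the sum $x_1+x_2$, so it adds curvature only in the $A$-direction, leaving the $B$-direction with exactly the curvature inherited from $\bfF\otimes\bfF$.

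First I would verify the $K$-log-concavity of $P(\cdot;x)$. From \eqref{eq:P-def} one has
\begin{equation*}
-\log P(x_1,x_2;x) = -\log\bfF(x_1)-\log\bfF(x_2)+\tfrac12\bigl|x - \tfrac{x_1+x_2}{2}\bigr|^2 + \log Z(x)+\tfrac{d}{2}\log(2\pi).
\end{equation*}
Since $\bfF$ is $\beta$-log-concave by Theorem \ref{thm:main-existence}, the first two terms contribute a Hessian bounded below by $\beta I_{2d} = \beta(\sP_A+\sP_B)$. A direct computation shows that the Hessian of the quadratic term equals $\tfrac14\begin{pmatrix} I & I \\ I & I\end{pmatrix} = \tfrac12 \sP_A$. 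Thus $P(\cdot;x)$ is $K$-log-concave with $K = (\beta+\tfrac12)\sP_A + \beta\,\sP_B$, so $\kappa_A = \beta+\tfrac12$ and $\kappa_B = \beta$.

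Next I would compute the density ratio. Writing $\nu = P(\cdot;\tilde x)$, $\mu = P(\cdot;x)$ and $H = \log(\mu/\nu)$, the only $(x_1,x_2)$-dependent part of $H$ is the difference of the two quadratic terms, which simplifies to
\begin{equation*}
	H(x_1,x_2) = \tfrac12\langle x-\tilde x,\ x_1+x_2\rangle + \text{const}.
\end{equation*}
The gradient $\nabla H = \bigl(\tfrac12(x-\tilde x),\tfrac12(x-\tilde x)\bigr)$ lies entirely in the diagonal subspace $A$, with Euclidean norm $|x-\tilde x|/\sqrt{2}$. Consequently $H$ satisfies the directional Lipschitz condition $|H(u)-H(v)| \leq L_A|\sP_A(u-v)|$ with $L_A = |x-\tilde x|/\sqrt{2}$, as is immediate from $|\sP_A(u-v)| = |(x_1-y_1)+(x_2-y_2)|/\sqrt{2}$.

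Finally I would invoke Theorem \ref{thm:main-directional}. Since $\beta \geq \tfrac12$ by Theorem \ref{thm:main-existence}, we have $\kappa_A = \beta+\tfrac12 \leq 2\beta = 2\kappa_B$, placing us in the first regime. The bound therefore reads
\begin{equation*}
	W_\infty\bigl(P(\cdot;x),P(\cdot;\tilde x)\bigr) \leq \frac{L_A}{\kappa_A} = \frac{|x-\tilde x|/\sqrt{2}}{\beta+\tfrac12} = 2^{-1/2}\bigl(\tfrac12+\beta\bigr)^{-1}|x-\tilde x|,
\end{equation*}
which is the desired inequality. There is no real obstacle once Theorem \ref{thm:main-directional} is in hand: the whole content of the argument is the recognition that the diagonal/anti-diagonal decomposition is precisely what is needed to exploit the extra $\tfrac12$ of curvature that the mating Gaussian provides, and that without the anisotropic refinement one would only obtain the weaker constant $(\sqrt{2}\,\beta)^{-1}$ coming from the smaller log-concavity constant $\kappa_B = \beta$.
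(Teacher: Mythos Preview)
Your proposal is correct and follows essentially the same route as the paper: the paper packages this argument as Corollary~\ref{cor:main-W-infty} (for an arbitrary $\kappa$-log-concave $F$ on $\R^{2d}$, $\kappa>\tfrac12$) and then invokes it, but the content is identical---decompose $\R^{2d}$ into the diagonal and anti-diagonal subspaces, compute $\kappa_A=\beta+\tfrac12$, $\kappa_B=\beta$, $L_A=|x-\tilde x|/\sqrt2$, and apply the anisotropic bound in the regime $\kappa_A\le 2\kappa_B$.
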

         
	Since $W_{\infty,1} \leq \sqrt{2} W_\infty$ 
 in view of the trivial inequality 
 	$|(x_1,x_2)|_1 
		\le 
	\sqrt{2}\,\norma{(x_1,x_2)}$,
	this result implies the desired bound \eqref{eq:key-2}.
	Consequently, 
	the main conclusions of \cite{Calvez-Poyato-Santambrogio:2023} carry over to multi-dimensional traits.
	The following result summarises these conclusions.

	\begin{corollary}\label{cor:main-conv-fisher-model}
		Let $m \in C^1(\R^d)$ be $\alpha$-convex 
		for some $\alpha > 0$, and
		let $(\bflambda, \bfF)$ be as in Theorem \ref{thm:main-existence}.
		Take $0 \neq F_0 \in L^1_+(\R^d) $
        with
			$\fisinf{F_0}{\bfF} < \infty$, 
		and set
			$F_n = \cT^n[F_0]$
		for $k \geq 0$. 
		Then:
		\begin{enumerate}[(i)]
			\item \label{it:fis-inf-contr} ({Convergence of the relative $L^\infty$-Fisher information}) \
			For all $n \in \N$ we have
			\[
				\fisinf{F_n}{\bfF} 
					\leq
				\big(\tfrac12 + \beta\big)^{-n}
				\fisinf{F_0}{\bfF} \, . 
			\] 
			\item \label{it:kl-mass-dec} ({Convergence of the relative entropy}) \
			There exists a constant $C > 0$ depending on $F_0$  such that for all $n \in \N$ we have
			\begin{align*}
				\kldiv{ \frac{F_n}{\| F_n \|_{L^1}}}
					   {\bfF} 
					& \le 
				C \big(\tfrac12 + \beta\big)^{-2n}
				\tand
				\abs*{
						\frac{ \| F_n \|_{L^1} }
								{ \| F_{n - 1} \|_{L^1} }
						 - \bflambda} 
				\leq 
				C 
				\big(\tfrac12 + \beta\big)^{-n} \, . 
			\end{align*}
		\end{enumerate}
		
	\end{corollary}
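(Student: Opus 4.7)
The plan is to work with the renormalised densities $u_n := F_n/(\bflambda^n \bfF)$, so that $\fisinf{F_n}{\bfF} = \|\nabla \log u_n\|_{L^\infty}$ and the non-linear recursion for $u_{n+1}$ in terms of $u_n$ and the weighted transition kernels $P(\cdot;x)$ is the one recalled in Section~\ref{sec:Fisher}.

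For \eqref{it:fis-inf-contr}, the central step is a one-step contractivity estimate. Applying \eqref{eq:key-1} with $u_0$ replaced by $u_n$ yields
\begin{align*}
	\abs{\log u_{n+1}(x) - \log u_{n+1}(\tilde x)}
	\leq
	\|\nabla \log u_n\|_{L^\infty}\,W_{\infty,1}\bigl(P(\cdot;x),P(\cdot;\tilde x)\bigr).
\end{align*}
Combining the elementary inequality $W_{\infty,1}\leq \sqrt 2\, W_\infty$ with Theorem~\ref{thm:main-W-infty}, the right-hand side is bounded by $(\tfrac12+\beta)^{-1}\|\nabla \log u_n\|_{L^\infty}\,|x-\tilde x|$. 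Dividing by $|x-\tilde x|$ and letting $\tilde x\to x$ gives the one-step contractivity $\|\nabla \log u_{n+1}\|_{L^\infty}\leq (\tfrac12+\beta)^{-1}\|\nabla \log u_n\|_{L^\infty}$; iteration then yields \eqref{it:fis-inf-contr}.

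For the entropy bound in \eqref{it:kl-mass-dec}, I would invoke the Bakry--\'Emery logarithmic Sobolev inequality: since $\bfF$ is $\beta$-log-concave with $\beta>0$, it satisfies LSI with constant $\beta$. Setting $v_n := u_n/\|u_n\bfF\|_{L^1}$, so that $v_n\bfF = F_n/\|F_n\|_{L^1}$ is a probability density and $\nabla \log v_n = \nabla \log u_n$, the LSI gives
\begin{align*}
	\kldiv{\tfrac{F_n}{\|F_n\|_{L^1}}}{\bfF}
	\leq \frac{1}{2\beta}\int \abs{\nabla \log v_n}^2 v_n(x) \bfF(x)\dd x
	\leq \frac{1}{2\beta}\fisinf{F_n}{\bfF}^2,
\end{align*}
which combined with \eqref{it:fis-inf-contr} gives the claimed rate $(\tfrac12+\beta)^{-2n}$.

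For the mass ratio, I would exploit that $\cR[F] = \|F\|_{L^1}\,\cR[\hat F]$, where $\hat F := F/\|F\|_{L^1}$. Using $\cT = \cS\circ\cR$ with $\cS[F] = e^{-m}F$, this yields
\begin{align*}
	\frac{\|F_n\|_{L^1}}{\|F_{n-1}\|_{L^1}} = \int e^{-m(x)} \cR[\hat F_{n-1}](x)\dd x
	\qquad\text{and}\qquad
	\bflambda = \int e^{-m(x)} \cR[\bfF](x)\dd x.
\end{align*}
An elementary bilinear estimate gives $\|\cR[f]-\cR[g]\|_{L^1}\leq 2\|f-g\|_{L^1}$ for probability densities, so (using $e^{-m}\leq 1$) the left-hand side differs from $\bflambda$ by at most $2\|\hat F_{n-1}-\bfF\|_{L^1}$. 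Pinsker's inequality together with the entropy bound just derived then yields the desired rate $(\tfrac12+\beta)^{-n}$. The main technical point to address is the pointwise differentiation in the derivation of the one-step contractivity, which formally requires $u_n$ to be $C^1$ and strictly positive; this can be handled by a regularisation argument in the spirit of \cite{Calvez-Poyato-Santambrogio:2023}, relying on the fact that $\fisinf{F_0}{\bfF}<\infty$ already controls the Lipschitz constant of $\log u_0$ on the support of $\bfF$.
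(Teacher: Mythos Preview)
Your proposal is correct and follows essentially the same approach as the paper. Two minor remarks: for \eqref{it:fis-inf-contr}, the paper sidesteps the regularity issue entirely by working directly with Lipschitz constants (which is how $\fisinf{\cdot}{\bfF}$ is defined in Definition~\ref{def:infinity-Fisher}), so the bound on $|\log u_{n+1}(x)-\log u_{n+1}(\tilde x)|$ already \emph{is} the desired Lipschitz estimate and no passage to gradients or regularisation is needed; for the mass ratio in \eqref{it:kl-mass-dec}, the paper writes both quantities as integrals of $\phi:=e^{-m}*G$ against $\widehat F_n\otimes\widehat F_n$ and $\bfF\otimes\bfF$ and applies Pinsker on $\R^{2d}$ via tensorisation of the entropy, which is equivalent to your bilinear estimate on $\cR$ followed by Pinsker on $\R^d$.
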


One may wonder whether analogues of the contraction property  	
in \eqref{it:fis-inf-contr} hold with the same rate for functionals other than $\fisinf{\cdot}{\bfF}$, such as the relative entropy and the 
relative $L^2$-Fisher information.
In Section \ref{sec:different_metrics} we show that this is not the case, not even in the setting of quadratic selection 
	($m(x) = \frac{\alpha}{2}|x|^2$) and Gaussian initial data. 
We refer the reader to Section \ref{sec:different_metrics} for the details.

\subsection{Structure of the paper}

Section \ref{sec:transport-bounds} deals with 
$L^\infty$-optimal transport bounds for perturbations of log-concave densities, containing a general criterion (Theorem \ref{thm:main-abstract-coupl}) 
and the proofs of 
	Proposition \ref{prop:simple} and Theorem \ref{thm:main-directional}.
The applications to Fisher's infinitesimal model, and in particular the proof of Theorems \ref{thm:main-existence} and \ref{thm:main-W-infty} and Corollary \ref{cor:main-conv-fisher-model}, can be found in Section \ref{sec:application-fisher}.
The discussion after Corollary \ref{cor:main-conv-fisher-model} is expanded in  Section \ref{sec:different_metrics}, which deals with the relative $L^2$-Fisher information and the relative entropy instead of the relative $L^\infty$-Fisher information.
Finally, Section \ref{sec:peaks-log-conc} contains two lemmas on log-concave distributions that are used in the proof of Theorem \ref{thm:main-existence} in Section \ref{sec:application-fisher}.
	
	\subsection{Notation and preliminaries}
	\label{sec:notation}

	Let $L^1_+(\R^d)$ denote the cone of non-negative functions in $L^1(\R^d)$.
	Throughout the paper, we identify (probability) densities in $L_+^1(\R^d)$ with the corresponding (probability) measures.
	
	Weak convergence of densities (or measures) denotes convergence in duality with bounded continuous functions.
	We will frequently use that $(\mu,\nu) \mapsto W_\infty(\mu,\nu)$ 
	is jointly continuous with respect to weak convergence of probability measures. This follows from the corresponding result for $W_p$, since $W_p \to W_\infty$ pointwise as $p \to \infty$; see \cite{Givens-Shortt:1984}.

	\begin{definition}
		\label{def:infinity-Fisher}
    Suppose that $\mu \in L_+^1(\R^d)$ is a
	density,
	not necessarily normalised,
	such that $\supp \mu$ is closed and convex.
	If $\nu \in L_+^1(\R^d)$
	satisfies 
		$\nu \ll \mu$ 
	and 
		$ \log \bigl(\frac{\ddd\nu}{\ddd\mu}\bigr) = f$ 
	$\mu$-a.e.~for some Lipschitz function $f\colon \supp \mu \to \R$,
	then 
	\begin{equation}\label{eq:rel-Linf-fish-info}
		\fisinf{\nu}{\mu} 
			:= 
			\sup\biggl\{ 
				\frac{|f(x) - f(y)|}{|x - y|}
				\ : \ 
				x, y \in \supp \mu\,, \ x \neq y
			\biggr\} 
			\, .
	\end{equation}
	Otherwise, $\fisinf{\nu}{\mu} := + \infty$.
	\end{definition}	
	
	\begin{remark}
	In particular, if $\nu \ll \mu$ and $\log \bigl(\frac{\ddd\nu}{\ddd\mu}\bigr) = f$ $\mu$-a.e.~for some  $f \in C^1(\supp \mu)$, then 
		$\fisinf{\nu}{\mu} = \| \nabla f \|_{L^\infty(\R^d, \mu)}$.
	\end{remark}

    The relative entropy (or Kullback-Leibler divergence) of a probability density $\nu$ with respect to a probability density $\mu$ is defined by
    \begin{equation}\label{eq:rel-entr}
			\kldiv{\nu}{\mu} = 
			\begin{cases}
				\displaystyle 
    \int_{\R^d} 
				\rho \log \rho
		\dd \mu
     &\text{if } \nu \ll \mu \text{ 
			with $\rho:= \frac{\ddd\nu}{\ddd\mu}$} \, ,
				\\
				+ \infty &\text{otherwise}\, .
			\end{cases}
		\end{equation}

		$B_r(x)$ denotes the open ball of radius $r > 0$ around $x \in \R^d$. 
		Its closure will be denoted $\overline B_r(x)$

		$\gamma_{\mu,C}$ denotes the centred Gaussian density with mean $\mu \in \R^d$  and covariance matrix $C \in \R^{d \times d}$.
		If $\mu = 0$ we simply write $\gamma_{C}$.

	The following well-known property of log-concave densities will be useful in the sequel; see, e.g.,~\cite[Thm.~3.7.2]{sau-wel-2014}.

	\begin{lemma}[Preservation of log-concavity]
	\label{lem:convolution}
		For $i = 1, 2$, 
		let $\mu_i \in L_+^1(\R^d)$ be $K_i$-log-concave for some matrix 
		$K_i \in \R^{d\times d}$ with $K_i \succ 0$. 
		Then $\mu_1 * \mu_2$ is $K$-log-concave with
		\begin{align*}
			K^{-1} = K_1^{-1} + K_2^{-1} \, .
		\end{align*}
	\end{lemma}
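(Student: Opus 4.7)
My plan is to reduce the claim to the classical Prékopa theorem (the $x$-marginal of a jointly log-concave density on $\R^d\times\R^d$ is log-concave in $x$) after extracting a sharp quadratic correction. Writing $\mu_i = e^{-U_i}$ with $U_i$ being $K_i$-convex, I would decompose $U_i(z) = \tfrac{1}{2}\ip{z, K_i z} + W_i(z)$ with $W_i$ convex. Motivated by the Gaussian case $\gamma_{K_1^{-1}} * \gamma_{K_2^{-1}} = \gamma_{K_1^{-1}+K_2^{-1}}$, set $K := (K_1^{-1} + K_2^{-1})^{-1}$, which by the parallel-sum identity equals $K_1(K_1+K_2)^{-1} K_2 = K_2(K_1+K_2)^{-1} K_1$. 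Since $K$-log-concavity of $\mu_1 * \mu_2$ is equivalent to log-concavity of $x \mapsto (\mu_1 * \mu_2)(x) e^{\frac{1}{2}\ip{x, Kx}}$, it suffices to prove the latter.

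Writing $(\mu_1 * \mu_2)(x) e^{\frac{1}{2}\ip{x,Kx}} = \int_{\R^d} e^{-\Psi(x,y)}\,\ddd y$ with
\[
    \Psi(x,y) := U_1(x-y) + U_2(y) - \tfrac{1}{2}\ip{x, Kx},
\]
Prékopa reduces the problem to the joint convexity of $\Psi$ on $\R^d \times \R^d$. The maps $(x,y)\mapsto W_1(x-y)$ and $(x,y)\mapsto W_2(y)$ are jointly convex (pre-compositions of convex functions with linear maps), so they can be discarded, and what matters is the quadratic part
\[
    Q(x,y) = \tfrac{1}{2}\ip{x-y, K_1(x-y)} + \tfrac{1}{2}\ip{y, K_2 y} - \tfrac{1}{2}\ip{x, Kx},
\]
whose block Hessian is
\[
    \hess Q = \begin{pmatrix} K_1 - K & -K_1 \\ -K_1 & K_1 + K_2 \end{pmatrix}.
\]
Since $K_1 + K_2 \succ 0$, the Schur-complement criterion reduces $\hess Q \succcurlyeq 0$ to $(K_1 - K) - K_1(K_1+K_2)^{-1} K_1 \succcurlyeq 0$. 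The left-hand side simplifies to $K_1(K_1+K_2)^{-1} K_2 - K$, which vanishes by the choice of $K$.

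Hence $\Psi$ is jointly convex, $e^{-\Psi}$ is jointly log-concave on $\R^d\times\R^d$, and Prékopa's theorem gives log-concavity of its $y$-marginal, which is exactly the $K$-log-concavity of $\mu_1 * \mu_2$. The only non-routine step is identifying the parallel-sum matrix $K$: the Gaussian prototype fixes it unambiguously, and the Schur complement then saturates at equality, showing simultaneously that the formula is correct and that no strictly larger $K$ could be extracted by this argument.
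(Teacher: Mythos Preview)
Your proof is correct. The paper does not actually prove this lemma: it states it as a well-known fact and refers to \cite[Thm.~3.7.2]{sau-wel-2014}. Your argument via Pr\'ekopa's theorem together with the Schur-complement identity for the parallel sum $K = K_1(K_1+K_2)^{-1}K_2$ is a clean and standard self-contained proof; the quadratic computation saturates exactly, confirming that the stated $K$ is optimal for this method.
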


	We also use the following well-known result in the reverse direction; see \cite[Lem.~1.3]{Eldan-Lee:2018}.

	\begin{lemma}[Log-convexity along the heat flow]\label{lem:upp-bound-hess-pot-gauss-conv}
		Let $\mu$ be a probability measure on $\R^d$. 
		For any $t > 0$ the probability density 
			$\mu_t := \mu * \gamma_{t I_d}$ 
		is $(-\frac1t)$-log-convex, 
		in the sense that, for all $x \in \R^d$,
		\begin{equation}
			\hess \bigl(
					-\log \mu_t(x) \bigr) 
				\preccurlyeq   
				\frac{1}{t}I_d \, .
		\end{equation}
	\end{lemma}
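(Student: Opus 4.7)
The plan is to factor out the leading Gaussian from the convolution and then exploit the convexity of the log-Laplace transform of a positive measure. Writing out the definition,
\begin{equation*}
\mu_t(x) = \frac{1}{(2\pi t)^{d/2}} \int_{\R^d} e^{-|x-y|^2/(2t)} \dd\mu(y)
= \frac{e^{-|x|^2/(2t)}}{(2\pi t)^{d/2}} \int_{\R^d} e^{\langle x,y\rangle/t}\, e^{-|y|^2/(2t)} \dd\mu(y),
\end{equation*}
I would introduce the finite positive measure $\dd\tilde\mu(y) := (2\pi t)^{-d/2} e^{-|y|^2/(2t)} \dd\mu(y)$ and set $\Lambda(x) := \log \int_{\R^d} e^{\langle x,y\rangle/t} \dd\tilde\mu(y)$, so that
\begin{equation*}
-\log\mu_t(x) = \tfrac{|x|^2}{2t} - \Lambda(x).
\end{equation*}
The claim is thus equivalent to $\hess\Lambda(x) \succcurlyeq 0$ for every $x \in \R^d$.

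The key step is the classical observation that the logarithm of the moment generating function of a positive measure is a convex function on $\R^d$, a direct consequence of H\"older's inequality. The Gaussian factor $e^{-|y|^2/(2t)}$ in $\tilde\mu$ provides enough decay to ensure that all exponential moments are finite and that differentiation under the integral sign is justified, so $\Lambda \in C^\infty(\R^d)$ with $\hess\Lambda \succcurlyeq 0$. Plugging this back yields
\begin{equation*}
\hess\bigl(-\log\mu_t(x)\bigr) = \tfrac{1}{t}I_d - \hess\Lambda(x) \preccurlyeq \tfrac{1}{t}I_d,
\end{equation*}
which is exactly the asserted bound.

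I do not foresee any serious obstacle. The only small preliminary to dispatch is that $\mu_t$ is smooth and strictly positive, so that $-\log\mu_t$ is $C^\infty$ on all of $\R^d$; both facts are immediate since convolution with a strictly positive Gaussian is infinitely smoothing. As a probabilistic alternative, one could instead compute directly that $\hess(-\log \mu_t)(x) = \tfrac{1}{t} I_d - \tfrac{1}{t^2}\,\mathrm{Cov}_{\nu_x}(Y)$, where $\nu_x$ denotes the posterior probability measure on $\R^d$ with density proportional to $y \mapsto e^{-|x-y|^2/(2t)}\mu(y)$; the positive semidefiniteness of the covariance matrix then gives the same conclusion (and in fact the sharper identity), but is essentially the same calculation as the log-MGF approach above.
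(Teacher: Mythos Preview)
Your argument is correct: factoring out the Gaussian yields $-\log\mu_t(x)=\tfrac{|x|^2}{2t}-\Lambda(x)$ with $\Lambda$ the log-moment generating function of the finite positive measure $\tilde\mu$, and convexity of $\Lambda$ (via H\"older) gives the bound. The posterior-covariance variant you mention is indeed the same computation and produces the identity $\hess(-\log\mu_t)(x)=\tfrac1t I_d-\tfrac1{t^2}\mathrm{Cov}_{\nu_x}(Y)$.

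The paper does not give its own proof of this lemma; it simply records it as a well-known fact and cites \cite[Lem.~1.3]{Eldan-Lee:2018}. Your approach is the standard one found in that reference (there stated via the covariance identity), so there is nothing to compare.
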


	\section{\texorpdfstring{$L^\infty$}{}-optimal transport of log-concave measures}	
	\label{sec:transport-bounds}

	In this section we present several bounds for the $\infty$-Wasserstein distance $W_\infty(\mu,\nu)$ 
	between 
	a log-concave measure $\mu$ and a log-Lipschitz perturbation $\nu$.
	Unless specified otherwise, 
	the Wasserstein distance is taken with respect to the Euclidean distance on the underlying space.
	Our bounds will be derived from the following general criterion.

	\begin{theorem}
	\label{thm:main-abstract-coupl}
Let $\mu$ and $\nu$ be probability densities on $\R^d$ 
satisfying the following assumptions:
\begin{enumerate}[(i)]
	\item $\mu$ is $K$-log-concave for some matrix 
	$K \in \R^{d\times d}$ with $K \succ 0$. 
	\item \label{it:directional-Lip} $\nu = e^{-H} \mu$ with 
	$H \in C(\R^d)$ satisfying
	\begin{align*}
		\abs*{H(y)-H(x)} \le \ell(x-y)
		\quad \text{for all }
		x, y \in \R^d \, ,
	\end{align*}
	for some
		positively 1-homogeneous 
		function
		$\ell \in C(\R^d)$.
\end{enumerate}
Then we have 
\begin{equation*}
	W_{\infty}(\mu, \nu) \leq M \, ,
\end{equation*}
where 
\[
M \coloneqq \sup_{z\in \R^d} 
	\Bigl\{ 
	| z | 
	\, : \,
		\ip{z, Kz}  \leq \ell(z) 
		\Bigr\} \, .
\]
\end{theorem}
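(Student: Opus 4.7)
The plan is to couple two Langevin diffusions with invariant measures $\mu$ and $\nu$ using the same Brownian motion (synchronous coupling), and to bound the size of their difference pathwise.

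After a mollification step (convolving both $U := -\log\mu$ and $H$ with a small Gaussian, which preserves the $K$-convexity of $U$ and the modulus $\ell$ of $H$, and then invoking the joint continuity of $W_\infty$ under weak convergence recalled in Section~\ref{sec:notation}), I may assume $U, H \in C^\infty(\R^d)$. Setting $V := U + H$, consider the SDEs
\begin{equation*}
dX_t = -\nabla U(X_t)\,dt + \sqrt{2}\,dB_t,
\qquad
dY_t = -\nabla V(Y_t)\,dt + \sqrt{2}\,dB_t,
\end{equation*}
driven by the same Brownian motion $B$ and started from $X_0 = Y_0 = 0$. The noise cancels in $Z_t := Y_t - X_t$, so $t \mapsto Z_t$ is pathwise $C^1$.

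The core step is a pathwise differential inequality. From $K$-convexity of $U$ one has $\ip{Z_t, \nabla U(Y_t) - \nabla U(X_t)} \ge \ip{Z_t, K Z_t}$; from assumption~\ref{it:directional-Lip} together with positive $1$-homogeneity of $\ell$, a first-order expansion of $\epsilon \mapsto H(y + \epsilon v)$ yields $|\ip{\nabla H(y), v}| \le \ell(v)$ for all $y, v \in \R^d$. Combined,
\begin{equation*}
\ddt |Z_t|^2 \le -2\bigl(\ip{Z_t, K Z_t} - \ell(Z_t)\bigr).
\end{equation*}
Set $g(z) := \ip{z, Kz} - \ell(z)$. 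Homogeneity of its two summands (degrees $2$ and $1$) implies that along any unit ray $\{tu : t \ge 0\}$, the set $\{g \le 0\}$ is the interval $[0, \ell(u)/\ip{u, Ku}]$; taking the supremum over unit $u$ identifies $M = \sup_{|u|=1} \ell(u)/\ip{u, Ku}$, and shows $g(z) > 0$ whenever $|z| > M$. Since $Z_0 = 0$, a standard first-passage argument (if $\tau$ were the first time $|Z_\tau| = M$, the inequality above would force $|Z_t|^2$ to be strictly decreasing immediately past $\tau$) yields $|Z_t| \le M$ for every $t \ge 0$, almost surely.

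To conclude, $\law(X_t) \to \mu$ in $W_2$ by Bakry--\'Emery, and the synchronous coupling $|Y_t - X_t| \le M$ transfers tightness from $\{\law(X_t)\}$ to $\{\law(Y_t)\}$, whence $\law(Y_t) \to \nu$ weakly (uniqueness of the stationary measure coming from standard arguments using the quadratic growth of $V$ at infinity, inherited from $U$). Joint continuity of $W_\infty$ under weak convergence then yields $W_\infty(\mu,\nu) \le M$, and undoing the mollification closes the proof. The main obstacle I anticipate is precisely this last convergence $\law(Y_t) \to \nu$: since $\nu$ need not be log-concave and $H$ need not be semi-convex, standard ergodicity criteria do not apply directly, and one must use the coupling itself to obtain tightness and then identify the limit.
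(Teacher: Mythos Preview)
Your core argument---the synchronous coupling of the two Langevin diffusions and the pathwise differential inequality $\ddt|Z_t|^2 \le -2(\ip{Z_t,KZ_t}-\ell(Z_t))$---matches the paper exactly. The difference lies in two technical choices, and the first one dissolves precisely the obstacle you flagged.

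\textbf{Initial condition.} The paper does \emph{not} start both processes at a point. Instead it takes $X_0 \sim \nu$ and $Y_0 = X_0$. Then $Y$ (the Langevin for $\nu$) is started at its own invariant measure, so $\law(Y_t) = \nu$ for \emph{every} $t$; no ergodicity for $\nu$ is needed at all. Only $\law(X_t) \to \mu$ is required, and that follows from the strong log-concavity of $\mu$. Lower semicontinuity of $W_\infty$ under weak convergence then gives the result directly. This sidesteps entirely the issue you anticipated in your final paragraph.

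\textbf{Mollification.} Your plan to convolve $U$ with a Gaussian is problematic: $U$ may take the value $+\infty$ on part of $\R^d$, and even when finite, $U*\gamma_\epsilon$ need not have a Lipschitz gradient (e.g.\ $U(x)=|x|^4$), which you need for strong existence of the SDE. The paper instead mollifies the \emph{density} $\mu$, setting $\mu_n = \mu * \gamma_{1/n I_d}$ and $\nu_n \propto e^{-H}\mu_n$. Then $-\log\mu_n$ is automatically smooth, $K_n$-convex with $K_n \to K$ (Lemma~\ref{lem:convolution}), and has Hessian bounded above by $nI_d$ (Lemma~\ref{lem:upp-bound-hess-pot-gauss-conv}), so $\nabla(-\log\mu_n)$ is Lipschitz. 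One then checks $M_n \to M$ and $\nu_n \to \nu$ weakly to pass to the limit. The smoothing of $H$ is handled in a separate final step.
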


    \begin{remark}
			Note that the assumptions imply that $H$ is Lipschitz continuous with Lipschitz constant 
				$L := \sup_{|z| = 1} \ell(z)$.
            A possible choice of $\ell$ is given by $\ell(z) = L \norma{z}$.
			However, it is important to allow for other choices of $\ell$ which take anisotropy into account. 
			This will indeed be crucial to get optimal bounds in our application to the Fisher model.
            When $H\in C^1(\R^d)$, 
			the assumed bound on $H$ can be written equivalently as 
            \begin{align*}
				\scal{\nabla H(x), z} \le \ell(z)
				\quad \text{for all }
				x, z \in \R^d\, .
			\end{align*}
        \end{remark}
    \begin{proof}
		The proof consists of three steps.
		\smallskip

    \textit{Step 1.} \
    Suppose first that $\mu = e^{-U}$ for some 
		$U \in C^2(\R^d)$   
	such that $\nabla U$ is Lipschitz,
	and that $H\in C^1(\R^d)$.
	It then follows from the standard theory of stochastic differential equations 
		\cite[Thm.~5.2.9]{Karatzas-Shreve:1991} 
	that 
    there exists a unique strong solution to the following system of SDEs, driven by \emph{the same Brownian motion} $B_t$, for all times $t \geq 0$:
		\begin{align}
			\label{eq:first-lang-log-conc}
			\dd X_t 
			& = 
			-\nabla U(X_t) \dd t 
			+
			\sqrt{2} \dd B_t \, , \quad 
			& X_0 & \sim \nu\, ,
			\\
			\label{eq:second-lang-log-lip-pert}
			\dd Y_t 
			&	= 
			-\nabla U(Y_t) \dd t 
			- \nabla H(Y_t) \dd t 
			+
			\sqrt{2} \dd B_t\, , 
			\quad 
			& Y_0 & = X_0\,.
		\end{align}
		Subtracting these equations in their integral form
		we note that
		the Brownian term vanishes,
		and since $X$ and $Y$ have a.s.~continuous sample paths,
		we infer that
		the sample paths of
		$Z := X - Y$ 
		are continuously differentiable a.s.
		Using the chain rule and our assumptions, we find
		\begin{align*}
			\frac12 \ddt  |Z_t|^2 
			& =  - \bip{X_t-Y_t,\nabla U (X_t) - \nabla U(Y_t) }
			+ \bip{X_t-Y_t, \nabla H(Y_t)}
			\\
			& \leq 
			- \scal{Z_t, K \,Z_t} + \ell(Z_t) \, .
		\end{align*}
		
		Observe now that, for any differentiable function 		
			$h\colon \R_{\geq 0}\to \R_{\geq 0}$ 
		with $h(0)=0$ 
		we have
		$
		\sup h = \sup_{x \geq 0}\{ h(x) : h'(x)\geq 0 \}.
		$
		Applying this identity to 
		$h(t) = \frac12|Z_t|^2$, we obtain 
		\begin{align}
		\label{eq:xy-bound}
			|X_t-Y_t| \le M  
			\quad\text{for all } t\ge 0\, .
		\end{align}
		
Since $\mu$ is strongly log-concave and $X_0$ has finite second moment, 
$\law(X_t)$ converges to $\mu$ in $W_2$-distance as $t \to \infty$, 
hence weakly. 
Using the joint lower semicontinuity of $W_\infty$ with respect to weak convergence \cite{Givens-Shortt:1984}
we deduce that $W_\infty(\mu, \nu) \leq M$.

\smallskip
\textit{Step 2.} \
We now remove the extra assumptions on $\mu$. 
To this end, set 
$\mu_n = \mu * \gamma_{\frac{1}{n}I_d}$ 
and define the probability density 
	$\nu_n 
		\propto e^{-H}\mu_n$.
Note that $U_n = - \log \mu_n$ is smooth
with
\[
	K_n \coloneqq \tond*{K^{-1} + \frac{1}{n}I_d}^{-1} \preccurlyeq \hess U_n \preccurlyeq nI_d			
\]
by Lemma \ref{lem:convolution} and \ref{lem:upp-bound-hess-pot-gauss-conv}.
Therefore, we are in a position to apply Step 1 and we obtain the bound 
	$W_\infty(\mu_n, \nu_n) \le M_n$,
where
\[
M_n \coloneqq \sup_{z\in \R^d} 
	\Bigl\{ 
	| z | 
	\, : \,
		\ip{z, K_nz} 
		\leq \ell(z)  
		\Bigr\} \, .
\]
        
Note that $\mu_n \to \mu$ weakly. 
Moreover, Lemma \ref{lem:strong-weak-conv-log-conc-gauss-conv}
below implies that 
$\nu_n \to \nu$ weakly too. 
Hence, using again the joint lower semicontinuity of $W_\infty$ with respect to weak convergence 
we find
\begin{align*}
	W_\infty(\mu, \nu)
			\leq 
	\liminf_{n \to \infty}
		W_\infty(\mu_n, \nu_n)
			\leq 
	\liminf_{n \to \infty}
		M_n \, .
\end{align*}
It thus remains to show that $M_n \to M$.

For this purpose, we define the sets
\begin{align*}
	C_n = 
		\graf*{z \in \R^d \, : \, \ip{z, K_n z}  \leq \ell(z) }
	\tand
	C = 
		\graf*{z \in \R^d \, : \,  \ip{z, K z}  \leq \ell(z) } 
	\, .
\end{align*}
Since $t \mapsto t^{-1}$ is operator monotone (see, e.g., \cite[Lem.~2.7]{Carlen:2010}), 
we have
$
	\ip{z, K_n z} 
		\geq 
	\ip{z, K_{n-1} z}
$
for all $z$,
hence
	$C_n \subseteq C_{n-1}$
and $M_n \leq M_{n-1}$.
Moreover, since $\ip{z, K_n z}  \to \ip{z, K z}$ 
monotonically for all $z$,
we have 
	$C = \bigcap_n C_n$.

Using the continuity and the positive $1$-homogeneity 
of $\ell$, 
we infer that the sets $C_n$ are non-empty and compact.
Consequently, there exists
		$z_n \in C_n \subseteq C_1$ with
		$|z_n| = M_n$.
Since $C_1$ is compact, we may extract a subsequence
$\{z_{n_k}\}_k$ converging to some $\hat z \in C_1$.
Since each $C_m$ is closed, and since $z_{n_k} \in C_m$
whenever ${n_k} \geq m$, it follows that $\hat z \in C_m$, hence $\hat z \in \bigcap_m C_m = C$.
Therefore, $M \geq |\hat z| 
	= \lim_{k \to \infty} |z_{n_k}| 
	= \lim_{k \to \infty} M_{n_k}$.
Since $M \leq M_n \leq M_{n-1}$ for all $n$,
it follows that $\lim_{n \to \infty} M_{n} = M$.

\smallskip
\textit{Step 3.} \
We remove the differentiability assumptions on $H$. 
Write
\[
L := \sup_{\norma{x}=1} \ell(x) < \infty\, ,
\]
so that $H$ is $L$-Lipschitz.
Let $j \colon\R^d\to \R_{\ge 0}$ be a smooth mollifier supported in the unit ball of $\R^d$.
We write 
$j_n (x) := n^d j(nx)$ and $H_n := j_n * H$,
so that
\[
H_n(x) 
	= 
	n^d \int_{\R^d} 
			H(x-y) 
			j(ny) 
		\dd y 
	= 
		\int_{\R^d} 
			H\tond*{x-\frac{y}{n}} 
			j(y) 
		\dd y \, . 
\]
Since
$\supp j \subseteq B_1(0)$,
we have for  all $x, y \in \R^d$,
\begin{align}\label{eq:unif-conv-Hn}
	\abs*{H_n(x) - H(x)} 
			& \le
		\frac{L}{n}\, ,
	\\ \label{eq:lip-Hn}
	\abs*{H_n(x) - H_n(y)} 
			& \le 
		\ell(x-y) \leq {L} \norma*{x-y}\, .
\end{align}
	Define the probability measures $\nu_n \propto e^{-H_n}\mu$. 
	Since $H_n$ is a smooth function satisfying \eqref{eq:lip-Hn}, an application of Step 2 yields
	\[
		W_\infty(\mu, \nu_n) \leq M \, .
	\]
	Hence, since $W_\infty$ is jointly weakly lower semicontinuous,
	it suffices to show that $\nu_n \to \nu$ weakly. 
	For this purpose, it is in turn sufficient to prove that  
	$e^{-H_n}$ converges to $e^{-H}$ in $L^1(\mu)$, which we will do next.

Fix $\epsilon > 0$.
Since $\mu$ is $\kappa$-log-concave with $\kappa > 0$,
we have $- \log \mu(x) \geq \frac{\kappa}{2}| x - \bar x|^2$ for some $\bar x \in \R^d$.
Furthermore, since $\abs*{H(x)} \leq |H(0)| + L |x|$, \eqref{eq:unif-conv-Hn} implies that 
	$
	\abs*{H_n(x)}
		\le C 
				+ L \norma*{x} 
	$
with $C := \abs*{H(0)} + L$.
Therefore, there exists $R > 0$ such that, for all $n \geq 1$, 
\begin{align*}
	\int_{B_R(0)^c} e^{-H_n} \dd \mu 
	+ 
	\int_{B_R(0)^c} e^{-H} \dd \mu 
	\leq \frac{\epsilon}{2} \, .
\end{align*}
Furthermore, since the function $x \mapsto e^{-x}$ is uniformly continuous on bounded intervals,
\eqref{eq:unif-conv-Hn} implies that
there exists $\bar n \geq 1$ such that for all $n\ge \bar n$,
\[
	\sup_{x\in \overline{B}_R(0)}
	\bigl|e^{-H_n(x)}-e^{-H(x)}\bigr|
	\le 
	\frac{\epsilon}{2}\, .
\]
Consequently, for $n \geq \bar n$,
\begin{align*}
			\int_{\R^d} 
				\bigl| 
					e^{-H_n} - e^{-H}
				\bigr|  \dd \mu
			\le 
			\int_{B_R(0)^c}
				e^{-H_n} \dd \mu
			+
			\int_{B_R(0)^c}
				e^{-H} 
				\dd \mu
			+ 
			\sup_{\overline{B}_R(0)}
				\norma*{e^{-H_n}-e^{-H}}
			\leq \epsilon
			\, ,
\end{align*}
which implies that $e^{-H_n} \to e^{-H}$ in $L^1(\mu)$ as $n \to \infty$.
\end{proof}

\begin{lemma} \label{lem:strong-weak-conv-log-conc-gauss-conv}
	Let $\mu \in L_+^1(\R^d)$ be a $K$-log-concave probability density 
	for some matrix 
		$K \in \R^{d\times d}$ with $K \succ 0$,
	and define
		$\mu_n = \mu* \gamma_{\frac{1}{n}I_d}$
	for $n \geq 1$. 
	Then
	\[
		\int f\dd\mu_n \to \int f\dd\mu
	\]
	for all continuous functions $f\colon \R^d \to \R$ satisfying
	$\abs*{f(x)}\le C\exp\tond*{C\norma{x}}$
	for some $C>0$.
\end{lemma}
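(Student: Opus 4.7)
The plan is to combine weak convergence $\mu_n \weakly \mu$ with a uniform exponential moment bound on $\{\mu_n\}_{n \ge 1}$, and then conclude by a standard truncation of $f$.

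To set up both, I realise $X \sim \mu$ together with independent Gaussian vectors $Z_n \sim \gauss{0,\frac{1}{n}I_d}$ on a common probability space. Then $X_n := X + Z_n$ has law $\mu_n$ and $X_n \to X$ almost surely, which already gives $\mu_n \weakly \mu$. The same coupling yields a uniform moment bound. Indeed, setting $\kappa := \lambda_{\min}(K) > 0$, the measure $\mu$ is $\kappa$-log-concave in the scalar sense, so the Bakry--\'Emery criterion provides a log-Sobolev inequality with constant $\kappa$ and hence Gaussian concentration for $\mu$; in particular $\E\, e^{2C|X|} < \infty$. Since $|Z_n|$ is distributed as $|Z_1|/\sqrt{n}$, it is dominated by $|Z_1|$ under the coupling, and therefore, by independence of $X$ and $Z_n$,
\[
M := \sup_{n \ge 1} \int e^{2C|x|}\dd\mu_n(x)
\le \E\, e^{2C|X|}\cdot \E\, e^{2C|Z_1|} < \infty.
\]
(Alternatively, one could invoke Lemma \ref{lem:convolution} to see that the $\mu_n$ are uniformly $\tilde K$-log-concave with $\tilde K^{-1} = K^{-1} + I_d$ and apply Bakry--\'Emery directly to $\mu_n$.)

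I then conclude by truncation. Pick a cut-off $\chi_R \in C_c(\R^d)$ with $0 \le \chi_R \le 1$ and $\chi_R = 1$ on $\overline B_R(0)$. Since $f \chi_R$ is bounded and continuous, weak convergence gives $\int f \chi_R \dd\mu_n \to \int f \chi_R \dd\mu$ for each fixed $R$. For the tails, Cauchy--Schwarz combined with the pointwise bound $|f|^2 \le C^2 e^{2C|x|}$ and Markov's inequality applied to $e^{2C|x|}$ yield, uniformly in $n$,
\[
\Bigl|\int f(1-\chi_R)\dd\mu_n\Bigr|
\le \Bigl(\int |f|^2 \dd\mu_n\Bigr)^{\!1/2} \mu_n\bigl(\R^d \setminus \overline B_R(0)\bigr)^{\!1/2}
\le C M\, e^{-CR},
\]
and the same bound holds with $\mu$ in place of $\mu_n$. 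Sending first $n\to\infty$ and then $R\to\infty$ gives $\int f \dd\mu_n \to \int f \dd\mu$.

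The main obstacle is the uniform exponential moment bound; the decisive input is the strict positive definiteness $K \succ 0$, which supplies Gaussian tails for $\mu$ via Bakry--\'Emery. The coupling by independent Gaussian noise then transfers this bound uniformly to the family $\{\mu_n\}$, after which the truncation step is essentially bookkeeping.
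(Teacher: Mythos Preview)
Your argument is correct and follows essentially the same strategy as the paper: couple $X_n = X + Z/\sqrt{n}$, use strong log-concavity to obtain a uniform exponential moment bound, and deduce convergence of the integrals. The paper reaches the uniform $L^2$ bound by applying Bakry--\'Emery/Herbst to each $\mu_n$ (using Lemma~\ref{lem:convolution}) and then concludes via uniform integrability and convergence in probability; your route---bounding $\E e^{2C|X_n|} \le \E e^{2C|X|}\cdot \E e^{2C|Z_1|}$ directly from the coupling and then truncating---is a slightly more elementary variant of the same idea. One cosmetic point: with independent $Z_n$ the phrase ``dominated by $|Z_1|$ under the coupling'' is not literally true pathwise; what you use (and what suffices) is the distributional inequality $\E e^{2C|Z_n|} = \E e^{2C|Z_1|/\sqrt{n}} \le \E e^{2C|Z_1|}$.
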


\begin{proof}
	We show first that the integrals above are finite.
	Let $f$ be as in the statement, 
	let 
		$X\sim \mu$ and $Z\sim \gamma_{I_d}$ be independent, 
	and set 
		$X_n = X+ \frac{Z}{\sqrt{n}}$. 

	Let $\kappa > 0$ be the smallest eigenvalue of $K$, 
	and fix $\kappa' \in (0,\kappa)$.
	It follows from Lemma \ref{lem:convolution} 		
	that 
		$\mu_n$ is $\kappa'$-log-concave 
		for all $n$ sufficiently large.
	Therefore, the Bakry-\'Emery criterion implies that
		the measures $\mu$ and $\mu_n$ satisfy a logarithmic 
		Sobolev inequality with the same constant.
	Using this, the growth assumption on $f$, 
	and the fact that 
		$\expe{X_n} = \expe{X}$,
	the so-called Herbst argument 
		\cite[Prop.~5.4.1]{Bakry-Gentil-Ledoux:2014}
	implies that 
		$f(X_n) \in L^2$	
	and that the sequence $\{f(X_n)\}_n$ is bounded in $L^2$. 
	In particular, $f(X), f(X_n) \in L^1$, 
	hence the integrals above are finite.
	It remains to show that
	\[
		\expe{f(X_n)} \to \expe{f(X)}.
	\]
	For this purpose, note first that 
	$X_n \to X$ in probability.
	Since $f$ is continuous, 
	$f(X_n) \to f(X)$ in probability as well; 
		see, e.g.~\cite[Lem.~5.3]{Kallenberg:2021}.
	Therefore, to conclude that 
		$f(X_n)\to f(X)$ in $L^1$
	it suffices to show that 
		$\{ f(X_n) \}_n$ is uniformly integrable;
	see, e.g.~\cite[Thm.~5.12]{Kallenberg:2021}.
	But this follows from the fact
	that the sequence $\{ f(X_n) \}_n$ is bounded in $L^2$,
	which we proved above.
\end{proof}

\subsection{Isotropic case}

The simplest non-trivial case of Theorem \ref{thm:main-abstract-coupl} is the following estimate, which we stated as Proposition \ref{prop:simple} above.

\begin{corollary}\label{cor:log-lip-perturb-coupl}
	Let $\mu$ and $\nu$ 
	be probability densities on $\R^d$.
	Suppose that  
		$\mu$ is $\kappa$-log-concave for some $\kappa > 0$,
	and that $\nu = e^{-H} \mu$,
	where $H \in C(\R^d)$ is
		$L$-Lipschitz for some $L < \infty$. 
	Then: 
	\begin{align}
		\label{eq:transport-bound}
		W_{\infty} (\mu,\nu) \le \frac{L}{\kappa}\, .
	\end{align}
\end{corollary}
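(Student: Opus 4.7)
The plan is to derive this as a direct specialisation of Theorem \ref{thm:main-abstract-coupl}. The hypotheses of the corollary fit into the framework of that theorem with the particular choices $K = \kappa I_d$ (since $\mu$ is $\kappa$-log-concave) and $\ell(z) = L |z|$ (since $H$ is $L$-Lipschitz, the bound $|H(x)-H(y)| \le L|x-y| = \ell(x-y)$ holds and $\ell$ is continuous and positively $1$-homogeneous).

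With these choices, it remains only to compute the quantity $M$ appearing in the theorem. Since $\langle z, K z \rangle = \kappa |z|^2$, the defining constraint $\langle z, K z \rangle \le \ell(z)$ becomes $\kappa |z|^2 \le L |z|$, which for $z \neq 0$ is equivalent to $|z| \le L/\kappa$ (and is trivially satisfied at $z = 0$). Therefore
\begin{equation*}
    M = \sup_{z \in \R^d} \bigl\{ |z| \,:\, \kappa |z|^2 \le L |z| \bigr\} = \frac{L}{\kappa},
\end{equation*}
and Theorem \ref{thm:main-abstract-coupl} yields $W_\infty(\mu,\nu) \le L/\kappa$, which is exactly \eqref{eq:transport-bound}.

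There is no real obstacle here: the heavy lifting (the coupling argument via synchronously driven Langevin dynamics, the mollification of $\mu$, and the mollification of $H$) has already been carried out in the proof of Theorem \ref{thm:main-abstract-coupl}. The only thing to verify is that the isotropic hypothesis genuinely fits the abstract framework, which is immediate, and that the one-dimensional optimisation defining $M$ gives the expected answer $L/\kappa$.
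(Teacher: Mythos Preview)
Your proof is correct and matches the paper's approach exactly: the paper also deduces the corollary directly from Theorem~\ref{thm:main-abstract-coupl} with the choices $K = \kappa I_d$ and $\ell(z) = L|z|$. Your computation of $M = L/\kappa$ just spells out what the paper leaves implicit.
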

	
	\begin{proof}            
    This is an application of Theorem \ref{thm:main-abstract-coupl} with $K = \kappa I_d$ and $\ell(z) = L |z|$. 
	\end{proof}

	The following result is a reformulation of Corollary \ref{cor:log-lip-perturb-coupl} as a functional inequality.
	
	\begin{theorem}[$\infty$-Transport-Information Inequality]
		\label{thm:funct-ineq}
		Let $\mu \in L_+^1(\R^d)$ be a $\kappa$-log-concave probability density for some $\kappa > 0$.
		Then the \emph{transport-information inequality}
		\begin{equation}\label{eq:linf-transp-inf-ineq}
			W_\infty(\mu,\nu) 
				\le 
			\frac{1}{\kappa} 
				\fisinf{\nu}{\mu}
		\end{equation}
		holds for all probability densities $\nu \in L_+^1(\R^d)$.
	\end{theorem}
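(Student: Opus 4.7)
The plan is to derive Theorem \ref{thm:funct-ineq} as a direct reformulation of Corollary \ref{cor:log-lip-perturb-coupl} through Definition \ref{def:infinity-Fisher}. If $\fisinf{\nu}{\mu} = +\infty$, the inequality is vacuous, so I may assume $L := \fisinf{\nu}{\mu} < \infty$. By the definition of the $L^\infty$ relative Fisher information, this forces $\nu \ll \mu$ together with the existence of an $L$-Lipschitz function $f \colon \supp \mu \to \R$ for which $\log(\ddd\nu/\ddd\mu) = f$ holds $\mu$-almost everywhere.

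The next step is to promote $-f$, which is only defined on the closed convex set $\supp \mu$, to a Lipschitz function on all of $\R^d$ with no increase of its Lipschitz constant. Since $\supp \mu$ is a subset of the metric space $\R^d$, the McShane extension theorem yields an $L$-Lipschitz function $H \in C(\R^d)$ with $H = -f$ on $\supp \mu$. Because $\mu$ is supported on $\supp \mu$ and $\nu \ll \mu$, the identity $e^{-H} = e^f = \ddd\nu/\ddd\mu$ on $\supp \mu$ suffices to guarantee the measure equality $\nu = e^{-H}\mu$ on $\R^d$.

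At this point the pair $(\mu, \nu)$ satisfies all hypotheses of Corollary \ref{cor:log-lip-perturb-coupl} with Lipschitz constant $L$, so the corollary delivers
\[
W_\infty(\mu, \nu) \le \frac{L}{\kappa} = \frac{1}{\kappa}\, \fisinf{\nu}{\mu},
\]
which is the claim. I do not foresee any substantive obstacle here: the only ingredient beyond the already-established transport bound is the elementary McShane extension, and the proof is essentially a translation of the perturbation estimate into the functional-inequality language. The main conceptual content is simply recognising that $\fisinf{\nu}{\mu}$ is, by its very definition, the optimal Lipschitz constant of the log-density $\log(\ddd\nu/\ddd\mu)$ on $\supp\mu$.
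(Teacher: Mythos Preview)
Your proof is correct and follows essentially the same route as the paper's own argument: reduce to the finite case, extract the Lipschitz representative $f$ from Definition~\ref{def:infinity-Fisher}, extend it to all of $\R^d$ without increasing the Lipschitz constant, and then invoke Corollary~\ref{cor:log-lip-perturb-coupl}. The only cosmetic difference is that the paper cites the Kirszbraun theorem for the extension while you use McShane; for real-valued Lipschitz functions either works equally well.
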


	\begin{proof}
		Suppose that $\fisinf{\nu}{\mu} < +\infty$; otherwise there is nothing to prove.
		In view of Definition \ref{def:infinity-Fisher} there exists a Lipschitz function $h : \supp \mu \to \R$ with Lipschitz constant 
			$L := \fisinf{\nu}{\mu}$, 
			that agrees with 
			$\log\bigl( \frac{\ddd \nu}{\ddd \mu} \bigr)$ $\mu$-a.e.. 
		By the Kirszbraun theorem,
			$h$ can be extended to a Lipschitz function $H$ on $\R^d$ 
		with the same Lipschitz constant $L$.
		Since $\nu = e^{-H}\mu$, the result follows from
		Corollary \ref{cor:log-lip-perturb-coupl}.
	\end{proof}

    \begin{remark}
\label{rem:transport-information}
	The inequality \eqref{eq:linf-transp-inf-ineq} is an $L^\infty$-analogue of the well-known  
	$L^2$-based transport-information inequality
	\begin{align}
		\label{eq:WI2}
		W_2(\mu,\nu) 
			\le 
		\frac{1}{\kappa}
			\sqrt{\fistwo{\nu}{\mu}} \ ,
	\end{align}
	where $\fistwo{\nu}{\mu} 
		:= 
			\bigl\| 
				\nabla \log \bigl(\tfrac{\ddd\nu}{\ddd\mu} \bigr)
			\bigr\|_{L^2(\nu)}^2$ 
    denotes the $L^2$-relative Fisher Information 
	for sufficiently regular densities $\nu$.

	The latter inequality holds under the assumption that 
		$\mu$ satisfies a logarithmic Sobolev inequality
	$
		\kldiv{\nu}{\mu} \leq \frac{1}{2 \kappa} 
			\fistwo{\nu}{\mu},
	$
	and thus for every
	$\kappa$-log concave measure $\mu$
	by the Bakry--\'Emery criterion.
	To prove \eqref{eq:WI2}, note that 
	the logarithmic Sobolev inequality implies the
	transport-entropy inequality 
	$
		W_2(\mu, \nu) 
			\leq 
			\sqrt{
					\frac{2}{\kappa} 
					\kldiv{\nu}{\mu}
			} 
	$
	by the work of Otto and Villani \cite{ott-vil-2000}. 
	Combining these two inequalities immediately yields \eqref{eq:WI2}. 	
    
    In fact, by modifying the proof of Theorem \ref{thm:main-abstract-coupl}, in Appendix \ref{app-p-transp-inf}  we show that an analogue of \eqref{eq:WI2} holds for any $p\in [1,\infty]$.
	For a systematic study of transport-information inequalities 
	we refer to \cite{Guillin-Leonard-Wu:2009}.
\end{remark}

	\subsection{Anisotropic case}

We will now develop a more refined criterion, 
that yields improved bounds in situations where the measures behave differently in different directions.
Let $A$ and $B$ be non-empty subspaces of $\R^d$ that are orthogonal and satisfy $A \oplus B = \R^d$. 
Let $\sP_A$ and $\sP_B$ be the corresponding orthogonal projections.

\begin{corollary}
\label{cor:directional}
Let 
$\mu$ and 
$\nu$ be 
probability densities on $\R^d$ satisfying the following assumptions:
\begin{enumerate}[(i)]
	\item $\mu$ is $K$-log-concave, 
		with $K = \kappa_A \sP_A + \kappa_B \sP_B$ for some 
		$\kappa_A, \kappa_B > 0$.
	\item $\nu = e^{-H} \mu$ 
  		with $H \in C(\R^d)$ satisfying,
		for some $L_A < \infty$,
		\begin{align*}
			\norma*{H(x)-H(y)}
				\leq 
			L_A |\sP_A (x-y) |
			\quad \text{for all }
			x, y \in \R^d \, .
		\end{align*}
\end{enumerate}
Then:
\begin{equation}
	\begin{aligned}
	\label{eq:two-cases}
	W_{\infty}(\mu, \nu) 
	\leq 
	\begin{cases}
		\frac{L_A}{\kappa_A} 
		&
		\text{ if } 
		\kappa_A \leq 2 \kappa_B \, ,
		\\
		\frac{L_A}
		{2 \sqrt{\kappa_B(\kappa_A - \kappa_B)}} 
		&
		\text{ if } 
		\kappa_A \geq 2 \kappa_B\, .
	\end{cases}
	\end{aligned}
\end{equation}
\end{corollary}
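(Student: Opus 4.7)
The approach is to apply the general criterion of Theorem \ref{thm:main-abstract-coupl} with the prescribed matrix $K = \kappa_A \sP_A + \kappa_B \sP_B$ and with the positively $1$-homogeneous continuous function $\ell(z) := L_A |\sP_A z|$. The hypothesis \eqref{it:directional-Lip} is exactly the assumed bound $|H(x) - H(y)| \leq L_A |\sP_A(x-y)| = \ell(x-y)$, so Theorem \ref{thm:main-abstract-coupl} yields $W_\infty(\mu,\nu) \leq M$, where
\[
M = \sup_{z \in \R^d} \Big\{ |z| \,:\, \ip{z, Kz} \leq \ell(z) \Big\}.
\]
The remainder of the proof is a finite-dimensional optimisation problem.

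I would decompose every $z \in \R^d$ orthogonally as $z = a + b$ with $a := \sP_A z \in A$ and $b := \sP_B z \in B$. Since $\sP_A$ and $\sP_B$ commute with $K$ and are orthogonal projections onto orthogonal subspaces, one has $\ip{z, Kz} = \kappa_A |a|^2 + \kappa_B |b|^2$, $\ell(z) = L_A |a|$, and $|z|^2 = |a|^2 + |b|^2$. Setting $u := |a|$ and $v := |b|$, the problem reduces to maximising $u^2 + v^2$ over $(u,v) \in [0,\infty)^2$ subject to $\kappa_A u^2 + \kappa_B v^2 \leq L_A u$.

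For any admissible $u$ (which forces $u \leq L_A/\kappa_A$), the largest admissible $v^2$ is $(L_A u - \kappa_A u^2)/\kappa_B$, giving
\[
u^2 + v^2 \leq f(u) := \frac{(\kappa_B - \kappa_A)\, u^2 + L_A u}{\kappa_B}
\qquad \text{on } \ u \in [0, L_A/\kappa_A].
\]
The critical point of $f$ is $u^\star = L_A/\bigl(2(\kappa_A - \kappa_B)\bigr)$ when $\kappa_A > \kappa_B$. A direct comparison shows $u^\star \leq L_A/\kappa_A$ if and only if $\kappa_A \geq 2\kappa_B$. Hence I split into two cases: if $\kappa_A \leq 2\kappa_B$, the function $f$ is non-decreasing on $[0,L_A/\kappa_A]$ (either because $\kappa_B \geq \kappa_A$ or because $u^\star$ lies to the right of the interval), so the maximum is attained at $u = L_A/\kappa_A$, $v = 0$, yielding $M = L_A/\kappa_A$; if $\kappa_A \geq 2\kappa_B$, the maximum is attained at $u^\star$ and a short computation gives $f(u^\star) = L_A^2 / \bigl(4\kappa_B(\kappa_A - \kappa_B)\bigr)$, whence $M = L_A/\bigl(2\sqrt{\kappa_B(\kappa_A - \kappa_B)}\bigr)$.

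There is no real obstacle here: once Theorem \ref{thm:main-abstract-coupl} is invoked, everything is an explicit constrained optimisation reducible to one real variable by symmetry. The only point requiring mild care is the case distinction corresponding to whether the interior critical point of $f$ falls inside or outside the feasible interval, and checking continuity at the boundary $\kappa_A = 2\kappa_B$, where both expressions agree and equal $L_A/\kappa_A$.
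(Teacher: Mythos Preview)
Your proposal is correct and follows essentially the same approach as the paper: both invoke Theorem \ref{thm:main-abstract-coupl} with $K = \kappa_A \sP_A + \kappa_B \sP_B$ and $\ell(z) = L_A|\sP_A z|$, reduce to the two scalar variables $(|\sP_A z|,|\sP_B z|)$, optimise out the $B$-component first, and then distinguish the two cases according to whether the critical point of the resulting one-variable quadratic lies inside the feasible interval $[0,L_A/\kappa_A]$. The only differences are notational.
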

	
	\begin{proof}
		Applying 
		Theorem \ref{thm:main-abstract-coupl}
		with 
		$K = \kappa_A \sP_A + \kappa_B \sP_B$
		and 
		$\ell(z) =  L_A |\sP_A z |$,
		we infer that
			$W_{\infty}(\mu, \nu) \leq M$,
		where 
		\begin{equation}
			M 
			:= 
			\sup_{ z_A , z_B \geq 0 }
			\Bigl\{
			\sqrt{z_A^2 + z_B^2 }
			\ : \
			\kappa_A z_A^2 + \kappa_B z_B^2  
			\leq L_A z_A 
			\Bigr\}\, .
		\end{equation}
		Performing the maximisation over $z_B$ first, 
		we observe that
		\begin{align*}
			M^2
			&= 
			\sup
			\Bigl\{
			z_A^2 
			+ 
			\frac1{\kappa_B}
			\bigl(L_A z_A - \kappa_A z_A^2\bigr) 
			\ : \
			0 \leq z_A \leq \frac{L_A}{\kappa_A }
			\Bigr\}
			\\& 
			=
			\frac{1}{\kappa_B}
			\sup 
			\Bigl\{
			p(z_A)
			\ : \
			0 \leq z_A \leq \frac{L_A}{\kappa_A }
			\Bigr\}
			\, ,
		\end{align*}
		where $
		p(z) = 
		L_A z
		- (\kappa_A - \kappa_B) z^2 
		$.
		We now distinguish two cases.
		
		If
		$\kappa_A \leq 2 \kappa_B$,
		then 
		$p$ is non-decreasing on the interval 
		$[0,L_A/\kappa_A ]$.
		Therefore, the supremum of $p$ on  
		$[0,L_A/\kappa_A ]$ is attained at the right endpoint of this interval, hence
		\begin{align*}
			M^2 = 
			\frac{1}{\kappa_B}
			p\Bigl(\frac{L_A}{\kappa_A}\Bigr)
			= \frac{L_A^2}{\kappa_A^2} \, .
		\end{align*}
		If
		$\kappa_A > 2 \kappa_B$,
		then 
		$p$ attains its global maximum 
		in the open interval
		$(0,L_A/\kappa_A )$,
		at $z := \frac{L_A}{2(\kappa_A - \kappa_B)}$.
		Therefore,
		\begin{align*}
			M^2 = 
			\frac{1}{\kappa_B}
			p\Bigl(\frac{L_A}{2(\kappa_A - \kappa_B)}\Bigr)
			= \frac{L_A^2}{4 \kappa_B(\kappa_A - \kappa_B)} \, ,
		\end{align*}
		as desired.
	\end{proof}
	
	\begin{remark}\label{rmk:change-behav-direct}
		Note that the right-hand side of \eqref{eq:two-cases} 
		involves the 
		the ratio of a ``directional Lipschitz constant'' and an 
		``effective convexity parameter''.
		In this sense, the bound has the same form as 
		\eqref{eq:transport-bound}.
		The bound \eqref{eq:two-cases} is sharp for 
		$\kappa_A \leq 2 \kappa_B$, 
		as we will see in the application to the Fisher model below.
	\end{remark}

	We finally state a corollary that will be used in the application to Fisher's infinitesimal model.
	Let $F = e^{-V}$ be a $\kappa$-log-concave 
	probability density on $\R^{2d}$ 
	for some $\kappa > 0$.
	For $x \in \R^d$ we consider the probability density
	$P(\cdot;x)$ on $\R^{2d}$ defined by 
	\begin{align}
		\label{eq:P-shape}
		P(x_1, x_2;x) 
		= \frac{1}{Z_x} 
		\exp\Bigl( - V(x_1, x_2) - \frac12 \Big| x - \frac{x_1 + x_2}{2}\Big|^2 \Bigr) \, ,		
	\end{align}
	where $Z_x > 0$ is the normalising constant which ensures that $P_x$ is a probability density.
	The transition rates appearing in the Fisher model are precisely of this form; see Theorem \ref{thm:main-W-infty}.

	\begin{corollary}
		\label{cor:main-W-infty}
		Let $F$ be a $\kappa$-log-concave 
		probability density on $\R^{2d}$
		for some $\kappa > \frac12$.
		Then, for any $x,\tilde x\in \R^d$,
		\begin{align}
			\label{eq:product-perturbation}
			W_{\infty}
				\bigl( 
				P(\cdot; x) , P(\cdot; \tilde x)
				\bigr) 
			\le 
			\frac{1}{\frac12 + \kappa} 
		\frac{| x - \tilde x|}{\sqrt{2}}
				\, .
		\end{align}
	\end{corollary}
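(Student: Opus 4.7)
The plan is to apply the anisotropic bound of Corollary \ref{cor:directional} to the densities $\mu = P(\cdot;\tilde x)$ and $\nu = P(\cdot;x)$ on $\R^{2d}$, where the relevant subspace decomposition $\R^{2d} = A \oplus B$ is given by the diagonal $A = \{(v,v) : v \in \R^d\}$ and the antidiagonal $B = \{(v,-v) : v \in \R^d\}$. The reason this decomposition is natural: the log-ratio of $P(\cdot;x)$ and $P(\cdot;\tilde x)$ depends on $(x_1,x_2)$ only through the midpoint $(x_1+x_2)/2$, which is determined by $\sP_A(x_1,x_2)$, and the extra Gaussian factor in \eqref{eq:P-shape} adds a rank-$d$ Hessian contribution along precisely this diagonal direction.

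First, I would write
\[
\log \frac{P(x_1,x_2; x)}{P(x_1,x_2; \tilde x)}
= -\tfrac12 \Bigl|x - \tfrac{x_1+x_2}{2}\Bigr|^2
+ \tfrac12 \Bigl|\tilde x - \tfrac{x_1+x_2}{2}\Bigr|^2 + c
= \Bigip{x - \tilde x, \tfrac{x_1+x_2}{2}} + c'
\]
for constants $c, c'$ depending only on $x, \tilde x$. Thus $\nu = e^{-H}\mu$ with $H$ affine linear and depending on $(x_1,x_2)$ only through $(x_1+x_2)/2$. Using $|z_1+z_2| = \sqrt{2}\,|\sP_A(z_1,z_2)|$, a direct estimate gives
\[
|H(y_1,y_2) - H(x_1,x_2)|
\le \tfrac{1}{2}|x-\tilde x|\cdot|y_1-x_1+y_2-x_2|
\le \tfrac{|x-\tilde x|}{\sqrt 2}\,|\sP_A\bigl((y_1,y_2)-(x_1,x_2)\bigr)|,
\]
so assumption (ii) of Corollary \ref{cor:directional} holds with $L_A = |x-\tilde x|/\sqrt 2$.

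Next I would verify the log-concavity hypothesis (i) for $\mu = P(\cdot;\tilde x)$. Its potential on $\R^{2d}$ is $V(x_1,x_2) + \frac12|\tilde x - (x_1+x_2)/2|^2 + \log Z_{\tilde x}$. The first term has Hessian $\succcurlyeq \kappa I_{2d}$ by the hypothesis on $F = e^{-V}$. The second term has Hessian equal to $\tfrac14\bigl(\begin{smallmatrix}I_d & I_d\\ I_d & I_d\end{smallmatrix}\bigr) = \tfrac12 \sP_A$, since $\sP_A$ has matrix representation $\tfrac12\bigl(\begin{smallmatrix}I_d & I_d\\ I_d & I_d\end{smallmatrix}\bigr)$. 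Using $I_{2d} = \sP_A + \sP_B$, this shows $\mu$ is $K$-log-concave with
\[
K = \bigl(\kappa + \tfrac12\bigr)\sP_A + \kappa\, \sP_B,
\]
so $\kappa_A = \kappa + \tfrac12$ and $\kappa_B = \kappa$.

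Finally I would check which branch of Corollary \ref{cor:directional} applies: the condition $\kappa_A \le 2\kappa_B$ reads $\kappa + \tfrac12 \le 2\kappa$, i.e.\ $\kappa \ge \tfrac12$, which holds by assumption. The first branch then yields
\[
W_\infty\bigl(P(\cdot;x), P(\cdot;\tilde x)\bigr) \le \frac{L_A}{\kappa_A} = \frac{1}{\frac12 + \kappa}\cdot\frac{|x-\tilde x|}{\sqrt 2},
\]
as desired. There is essentially no serious obstacle here; the only thing to be careful about is tracking the projection factors of $\sqrt 2$ consistently and correctly identifying the contribution of the Gaussian factor as $\tfrac12 \sP_A$ rather than, say, $\tfrac14 I_{2d}$, which would give a strictly weaker bound.
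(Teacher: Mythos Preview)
Your proof is correct and follows essentially the same approach as the paper: the same diagonal/antidiagonal decomposition of $\R^{2d}$, the same identification of $K = (\kappa+\tfrac12)\sP_A + \kappa\sP_B$ and $L_A = |x-\tilde x|/\sqrt 2$, and the same application of Corollary~\ref{cor:directional} in the regime $\kappa_A \le 2\kappa_B$. The only minor point is that your Hessian computation tacitly assumes $V \in C^2$, whereas the paper notes that the general case follows from the nonsmooth formulation of $K$-convexity; this is a cosmetic gap at most.
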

	
	Before proving this result, 
	we first show that an application of the isotropic criterion from 
	Corollary 
	\ref{cor:log-lip-perturb-coupl}
	yields a suboptimal result.
	For ease of notation, suppose that $V \in C^2(\R^{2d})$.
	Fix $x, \tilde x \in \R^d$ 
	and let us write  
	$\mu_x = e^{-U} := P(\cdot;x)$ and 
	$\mu_{\tilde x} = e^{-H}\mu := P(\cdot; \tilde x)$.
        Then: 
	\begin{align*}
		\hess U(x_1, x_2) 
		= &
		\hess V(x_1, x_2) 
		+ \frac14 
		\begin{pmatrix}
			I_d & I_d
			\\
			I_d & I_d
		\end{pmatrix}
	\tand
		\nabla H(x_1, x_2)
		= \frac12
		\begin{pmatrix}
			x - \tilde x 
			\\
			x - \tilde x 
		\end{pmatrix} 
		\, .
	\end{align*}
	Taking into account that 
	$\hess V \succcurlyeq \kappa I_{2d}$ by assumption, 
	we have the bounds
	\begin{align}
		\label{eq:uniform-bounds}
		\hess U(x) \succcurlyeq  \kappa I_{2d}
		\tand
		|\nabla H(x)| 
			\leq 
		\frac{|x - \tilde x|}{\sqrt{2}}
		\, .
	\end{align}
	An application of 
		Corollary 
		\ref{cor:log-lip-perturb-coupl} 
	then yields the estimate
	$W_{\infty}(\mu_x, \mu_{\tilde x}) 
		\leq 
	\frac{|x - \tilde x|}{\kappa\sqrt{2}}$,
	which is weaker than the desired inequality \eqref{eq:product-perturbation}.
	(In particular, in the application to the Fisher model, where $\kappa = \beta$,
	the comparison of norms $|x|_1 \leq \sqrt{2}|x|_2$
	implies that 
	$W_{\infty,1}(\mu_x, \mu_{\tilde x}) 
	\leq 
	\frac{|x - \tilde x|}{\beta}$, 
	which is weaker than the desired inequality \eqref{eq:key-2}.)

	The following proof crucially exploits 
	anisotropy to obtain the sharp constant.

	\begin{proof}[Proof of Corollary \ref{cor:main-W-infty}]
	Consider the orthogonal decomposition of 
	$\R^{2d}$
	into symmetric and anti-symmetric vectors:
	$\R^{2d} = \R_{\rm s}^{2d} \oplus \R_{\rm a}^{2d}$, 
	where 
	\begin{align*}
		\R_{\rm s}^{2d} 
		:=
		\bigg\{ 
		\begin{pmatrix}
			x
			\\
			x
		\end{pmatrix}   \in \R^{2d}
		\ : \
		x\in \R^d
		\bigg\}
		\tand
		\R_{\rm a}^{2d} 
		:=
		\bigg\{ 
		\begin{pmatrix}
			x
			\\
			- x
		\end{pmatrix}  \in \R^{2d}
		\ : \
		x\in \R^d
		\bigg\}
		\, .
	\end{align*}
	The corresponding orthogonal projections
	$\sP_{\rm s}, \sP_{\rm a} : \R^{2d} \to \R^{2d}$ 
	have the form
	\begin{align*}
		\sP_{\rm s}
		\begin{pmatrix}
			x_1
			\\
			x_2
		\end{pmatrix} 
		=
		\frac{1}{2}
		\begin{pmatrix}
			x_1 + x_2
			\\
			x_1 + x_2
		\end{pmatrix}
		\tand 
		\sP_{\rm a}
		\begin{pmatrix}
			x_1
			\\
			x_2
		\end{pmatrix} 
		=
		\frac{1}{2}
		\begin{pmatrix}
			x_1 - x_2
			\\
			x_2 - x_1
		\end{pmatrix}
		\, .
	\end{align*}
	The crucial observation is now that the isotropic bounds \eqref{eq:uniform-bounds} can be replaced by more refined estimates that take into account how $U$ and $H$ behave in symmetric and anti-symmetric directions. 
	Namely, since 
	$
	\begin{psmallmatrix}
		I_d & I_d
		\\
		I_d & I_d
	\end{psmallmatrix} 
	= 2 \sP_{\rm s}
	$,
	we have the following improvement over \eqref{eq:uniform-bounds}:
	\begin{align*}
		\hess U(x) 
			\succcurlyeq
		\Bigl(\frac12 + \kappa\Bigr) \sP_{\rm s}  
		+ 
		\kappa \sP_{\rm a}
		\tand
		| \ip{\nabla H(x), z}|
		\leq \frac{| x - \tilde x|}{\sqrt{2}} |\sP_{\rm s} z | \,.
	\end{align*}
			(The first inequality holds when $V \in C^2(\R^{2d})$.
			In the general case, 
		the corresponding nonsmooth statement holds, 
		which asserts that 
		$U$ is 
		$K$-convex with 
		$K = \tond*{\frac12 + \kappa} \sP_{\rm s} 
		+ 
		\kappa \sP_{\rm a}$.)	
	Therefore, an application of 
	Corollary \ref{cor:directional}
	to 
	$A = \R^{2d}_{\rm s}$
	and 
	$B = \R^{2d}_{\rm a}$
	with parameters 
	\begin{align*}
		\kappa_A = \frac12 + \kappa
		\, , \quad
		\kappa_B = \kappa
		\, , \quad
		L_A =  \frac{| x - \tilde x|}{\sqrt{2}}
		\, ,
	\end{align*}
	yields, if 
	$\kappa \geq \frac12$,
	\begin{align*}
		W_{\infty}( \mu_x, \mu_{\tilde x} ) 
		\leq 
		\frac{1}{\frac12 + \kappa} 
		\frac{| x - \tilde x|}{\sqrt{2}}\, ,
	\end{align*}
	which is the desired inequality.
	\end{proof}

    \begin{remark}[Optimality]
		The constants in \eqref{eq:product-perturbation} are sharp.
		In fact, it was observed in \cite[Remark 2.7]{Calvez-Poyato-Santambrogio:2023} 
		that equality holds in the context of Fisher's infinitesimal model
		with quadratic selection in dimension $1$,
		which means that 
		$m(x) = \frac{\alpha}{2} x^2$
		with $\alpha > 0$. 
		In this case, we have
			$V(x_1,x_2) = \frac{\beta}{2}\tond*{x_1^2+x_2^2}$,
		with $\beta > \frac12$ as in Theorem \ref{thm:main-existence}.
		The measures 
			$P(\cdot  ; x)$ 
		are then Gaussian with mean 
			$(\frac12+\beta)^{-1}(\frac{x}{2},\frac{x}{2})$
		and 	
		the same covariance matrix. 
		The $W_\infty$-distance between two such measures is simply 
		the Euclidean distance between the respective means, which corresponds to the right-hand side in \eqref{eq:product-perturbation}.
		
		To show that this bound cannot be improved, take arbitrary densities $\mu$ and $\nu$ with finite first moment, and random variables 
		$X$ and $Y$ with marginals $\mu$ and $\nu$ respectively.
		Then:
		\begin{align*}
			\abs*{\int x \dd \mu(x) - \int x \dd\nu(x)} 
			= \bigl|  \expe {X - Y} \bigr|
			\leq \expe {| X - Y | }\, ,
		\end{align*} 
		which implies that, $\abs*{\int x \dd \mu(x) - \int x \dd\nu(x)}  \leq 
		W_\infty(\mu,\nu)$.
    \end{remark}

\subsection{Boundedness of the forward-flow transport map}

In this subsection we sketch an alternative argument to prove the transport bound of Corollary \ref{cor:log-lip-perturb-coupl}. Instead of constructing a suitable coupling, we provide an upper bound on the displacement of the \emph{forward-flow map}, whose inverse is
the so-called \emph{Langevin transport map}. 
The Langevin transport map and the forward-flow map were introduced by Kim and Milman \cite{Kim-Milman:2012} in their work on generalisations of Cafferelli's contraction theorem \cite{Caffarelli:2000}.
Subsequently, there has been a lot of interest in 
Lipschitz bounds for the forward-flow map 
\cite{mik-she-2023,Fathi-Mikulincer-Shenfeld:2023, nee-2022, kla-put-2021},
as such bounds allow one 
to transfer functional inequalities from log-concave measures to their image under the forward-flow map.
Here we show that $L^\infty$-bounds can be obtained as well.

As we already provided a rigorous proof of Corollary \ref{cor:log-lip-perturb-coupl} by a different method,
we keep the arguments in this section formal, 
so as not to obscure the main ideas.
In particular, we do not discuss the delicate issues of existence of flow maps. For more details on the construction and rigorous justifications we refer the reader to \cite{ott-vil-2000, Kim-Milman:2012, mik-she-2023, Fathi-Mikulincer-Shenfeld:2023}.

\paragraph*{Construction of the forward-flow map}

Consider probability densities 
	$\mu$ and $\nu$.
Here we assume that $\mu = e^{-U}$ and $\nu = e^{-H}\mu$   with smooth $U,H\colon\R^d\to \R $.
Moreover, $\mu$ is assumed to be $\kappa$-log-concave
(i.e.,~$\hess U \ge \kappa I_d$) for some $\kappa > 0$
and $\nu$ is a log-Lipschitz perturbation 
(i.e. $|\nabla H|\le L$ for some $L < \infty$).

	We shall briefly and informally describe the construction of the forward-flow map $S \colon \R^d \to \R^d$, 
which pushes-forward $\nu$ onto $\mu$
(i.e.,~$S_\# \nu = \mu$), referring the reader to the aforementioned references for details.

	The key idea is to interpolate between 
		$\nu$ and $\mu$
	using the Langevin dynamics 
	\[
		X_0 \sim \nu, 
		\qquad \dd X_t 
		= -\nabla U(X_t) \dd t 
		+ \sqrt{2} \dd B_t \, . 
	\]
	Denoting $\rho_t := \law (X_t)$, 
	we have $\rho_0 = \nu$ and 
	$\rho_t \to \mu$ weakly as $t \to \infty$. 
	Moreover, $\rho_t$ satisfies the Fokker-Planck equation, which we formulate here as a continuity equation
	\begin{equation}
		\partial_t \rho_t - \dive \bigl(\rho_t 
			\nabla \log f_t \bigr) = 0 \, ,
	\end{equation} 
	where $f_t := \frac{\ddd \rho_t}{\ddd \mu}$.
	Since $f_0 = \frac{\ddd \nu}{\ddd \mu} = e^{-H}$,
	our assumptions imply the pointwise bound
		$|\nabla \log f_0| 
		= |\nabla H| 
		\leq L$. 
	We will show that 
	\begin{align}
		\label{eq:nabla-log-f}
		|\nabla \log f_t| \leq L e^{- \kappa t}
	\end{align}	
	for all $t \geq 0$.

	For this purpose, let 
		$(P_t)_{t \geq 0}$ 
	be the transition semigroup associated to the Langevin dynamics, 
	and note that $f_t = P_t f_0$ by reversibility.
	Since $\mu$ is $\kappa$-log-concave, 
	the Bakry-\'Emery theory \cite[Thm.~3.3.18]{Bakry-Gentil-Ledoux:2014} 
	implies the pointwise gradient estimate 
	\begin{align}
		\label{eq:grad-est}
		| \nabla P_t f |
		\leq e^{-\kappa t} P_t |\nabla f| 
	\end{align}
	for all sufficiently regular $f : \R^d \to \R$.
	Using this inequality, the inequality 
	$|\nabla f_0| \leq L f_0$, and the positivity of $P_t$,
	we obtain
	\begin{align*}
		|\nabla f_t| 
		\leq 
		e^{-\kappa t} P_t |\nabla f_0|
		\leq 
		L e^{-\kappa t} P_t  f_0
		= 
		L e^{-\kappa t} f_t \, ,
	\end{align*}
	which yields the claimed bound \eqref{eq:nabla-log-f}.

        For $t \geq 0$, consider the flow map $S_t : \R^d \to \R^d$ 
	associated to the vector field 
		$(t,x) \mapsto - \nabla \log f_t(x)$, which satisfies
	\[
		S_0(x) = x \, , \qquad 
		\ddt S_t(x) = -\nabla \log f_t\big(S_t(x)\big) \, .
	\]  
	Then, by construction,
	$(S_t)_\# \nu = \rho_t$, 
	and for $0 \le s \le t$, \eqref{eq:nabla-log-f} yields
	\begin{equation}
		\label{eq:bound-displ-lang-flow}
		\| S_s - S_t \|_{L^\infty} 
		\leq
		\int_s^t
		 	\| \nabla \log (f_r \circ S_r) \|_{L^\infty} 
		\dd r
			\leq 
		\frac{L}{\kappa}
			\tond*{e^{-\kappa s} - e^{-\kappa t}} \, .
	\end{equation}
	Passing to the limit, it is now simple to deduce that 
	the forward-flow map
		$S = \lim_{t\to \infty} S_t$ 
	is well-defined, 
	that 
		$S_\#\nu = \mu$, 
	and that 
        \begin{equation}\label{eq:bound-flow-map}
         \| S - I\|_{L^\infty(\R^d)}\le \frac{L}{\kappa} \, .
         \end{equation}
	This is the desired bound,
	which immediately implies the bound $W_\infty(\mu,\nu)\le \frac{L}{\kappa}$ from \eqref{eq:transport-bound}.
 
	\smallskip 
	The inverse of the forward-flow map $S$ is known as the Langevin transport map. In general, these maps do not coincide with the Brenier map \cite{tat-2021, lav-san-2022}, except in dimension $1$. 
	An analogous bound to \eqref{eq:bound-flow-map} was proved in \cite[Prop. 3.1]{Calvez-Poyato-Santambrogio:2023} for the Brenier map, 
		under the stronger conditions that $\mu$ and $\nu$ are $\kappa$-log-concave, 
		supported on a Euclidean ball, and bounded away from $0$ on it. 
	It would be interesting to investigate whether \eqref{eq:bound-flow-map} can be improved in the presence of anisotropy, similarly to Theorems \ref{thm:main-abstract-coupl} and \ref{thm:main-directional}.

	\section{Applications to Fisher's infinitesimal model}
	\label{sec:application-fisher}
	
	Throughout this section, we fix $\alpha > 0$ 
	and an $\alpha$-convex mortality function
		$m \in C^1(\R^d)$. 
	We assume that $m \geq 0$ and $m(0) = 0$. 
	These assumptions are without loss of generality, except for the claim in Theorem \ref{thm:sol-trunc-prob} below that $\bflambda \in (0,1)$.
	We also fix $\beta > \frac12$ through the identity $\beta = \alpha + \frac{\beta}{\frac12 + \beta}$, as in Theorem \ref{thm:main-existence}.

        The following result is taken from 
	\cite[Lem.~2.4]{Calvez-Poyato-Santambrogio:2023}.
	For the convenience of the reader we include their proof.
	Recall that the metric $W_{\infty,1}$ was defined after \eqref{eq:key-1}.
	
	\begin{lemma}
	\label{lem:dual-inf-fish}
	Let $c > 0$, 
	and let 
		$P(\cdot \, ;\, x)$ 
	be a probability density on $\R^{2d}$
	for each $x \in \R^d$.
	Suppose that 
	 $u_{0}, u_1\in C(\R^d)$
	are strictly positive functions,
	that $\log u_0$ is $L$-Lipschitz,
	and that 
	\begin{equation}
		u_1({x}) = 
                c
			\int_{\mathbb{R}^{2d}} P({x}_1, {x}_2; x) 
			u_0({x}_1) 
			u_0({x}_2) 
			\dd {x}_1 \dd {x}_2 
	\end{equation}
	for all $x\in \R^d$.
	Then we have
  \begin{equation}
			|\log u_1({x}) - \log u_1 (\Tilde{{x}})|
				\leq 
			L \,
			W_{\infty, 1}\bigl( P(\cdot; {x}), P(\cdot; \Tilde{{x}})\bigr)
		\end{equation}
		for all ${x}, \Tilde{{x}} \in \mathbb{R}^d$.
	\end{lemma}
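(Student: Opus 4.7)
The plan is to exploit a near-optimal coupling between $P(\cdot;x)$ and $P(\cdot;\tilde x)$ for the $W_{\infty,1}$ distance, together with the pointwise Lipschitz bound on $\log u_0$, to compare the integrands directly before integration. Concretely, write $D := W_{\infty, 1}\bigl( P(\cdot; x), P(\cdot; \tilde x)\bigr)$ and fix $\epsilon > 0$. By (an approximate version of) the definition of $W_{\infty, 1}$, there exists a coupling $\pi$ on $\R^{2d} \times \R^{2d}$ with marginals $P(\cdot;x)$ and $P(\cdot;\tilde x)$ such that, on the support of $\pi$,
\begin{equation*}
  |x_1 - \tilde x_1| + |x_2 - \tilde x_2| \le D + \epsilon .
\end{equation*}

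Next, I would use the $L$-Lipschitz property of $\log u_0$ to convert this metric bound into a multiplicative bound on products of $u_0$-values. Namely, for all $(x_1, x_2), (\tilde x_1, \tilde x_2) \in \R^d \times \R^d$,
\begin{equation*}
  u_0(x_1) u_0(x_2) \le \exp\bigl( L \bigl( |x_1 - \tilde x_1| + |x_2 - \tilde x_2| \bigr) \bigr)\, u_0(\tilde x_1) u_0(\tilde x_2),
\end{equation*}
which on $\supp \pi$ gives the uniform bound $u_0(x_1) u_0(x_2) \le e^{L(D+\epsilon)} u_0(\tilde x_1) u_0(\tilde x_2)$.

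Then I would rewrite both $u_1(x)$ and $u_1(\tilde x)$ as integrals against $\pi$, using that the marginals of $\pi$ are $P(\cdot;x)$ and $P(\cdot;\tilde x)$ respectively:
\begin{equation*}
u_1(x) = c \int u_0(x_1) u_0(x_2)\dd \pi, \qquad u_1(\tilde x) = c \int u_0(\tilde x_1) u_0(\tilde x_2)\dd \pi.
\end{equation*}
Applying the pointwise inequality above under the integral yields $u_1(x) \le e^{L(D+\epsilon)}\, u_1(\tilde x)$. Taking logarithms and letting $\epsilon \to 0$ gives $\log u_1(x) - \log u_1(\tilde x) \le L D$; the reverse inequality follows by exchanging the roles of $x$ and $\tilde x$.

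I do not expect significant obstacles: the main observation is simply that the $W_{\infty,1}$ bound allows one to control the $L$-Lipschitz log-perturbation pointwise before integration, which is where the additive Lipschitz bound on $\log u_0$ becomes a multiplicative factor common to the whole integrand. The only mild technical point is the existence of an $\epsilon$-optimal coupling realizing $W_{\infty, 1}$ up to error $\epsilon$; this is standard, since $W_{\infty,1}$ is the increasing limit of $W_{p,1}$ as $p \to \infty$ and optimal couplings exist for each finite $p$.
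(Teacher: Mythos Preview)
Your proposal is correct and follows essentially the same approach as the paper: take a (near-)optimal coupling for $W_{\infty,1}$, use the Lipschitz bound on $\log u_0$ to obtain a pointwise multiplicative comparison of $u_0(x_1)u_0(x_2)$ and $u_0(\tilde x_1)u_0(\tilde x_2)$, integrate against the coupling, and symmetrize. The only cosmetic difference is that the paper uses an exact optimal coupling while you use an $\epsilon$-optimal one and pass to the limit.
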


    \begin{proof}
        Fix $x,\tilde x \in \R^d$, and let  $\gamma \in \pp\tond*{\R^{2d}\times \R^{2d}}$ be an optimal coupling in the definition of 
			$W_{\infty,1}\bigl(
				P(\cdot; x), P(\cdot; \tilde x)
				\bigr)$.
		For $(x_1, x_2), (\tilde x_1, \tilde x_2) \in \R^{2d}$ 
		we have
		\begin{align*}
			& \log\bigl( 
				u_0({x}_1) u_0({x}_2) 
				\bigr) 
				- 
			\log\bigl(  
				u_0(\tilde{x}_1) 
					u_0(\tilde{x}_2) 
				\bigr)
			\\ & = 	
				\log u_0 ({x}_1) 
				- 
				\log u_0 (\tilde{{x}}_1) 
				+ 
				\log u_0 ({x}_2) 
				- 
				\log u_0 (\tilde{{x}}_2)
			 \leq  
				L  \bigl(
				|{x}_1 - \tilde{{x}}_1| 
			+ 	|{x}_2 - \tilde{{x}}_2|
				\bigr) \, .
		\end{align*}
        Writing $W :=W_{\infty, 1}
		\bigl(
		   P(\cdot;x), P(\cdot;\tilde x)
	   \bigr)$, it follows using the bound above
		that
		\begin{align*}
			u_1({x}) 
			& = 
                c
			\int_{\R^{4d}} 
			u_0({x}_1) u_0({x}_2) 
				\; \gamma
				(\ddd x_1, \ddd x_2, 
				\ddd \tilde x_1, \ddd \tilde x_2) 
			\\ & \leq 
                c
			\int_{\mathbb{R}^{4d}} 
			\exp\Bigl(
				L  \bigl(
				|{x}_1 - \tilde{{x}}_1| 
			+ 	|{x}_2 - \tilde{{x}}_2|
				\bigr)
				\Bigr)
			u_0(\tilde{{x}}_1) 
			u_0(\tilde{{x}}_2) 
				\; \gamma(\ddd {x}_1, \ddd {x}_2, \ddd \tilde{{x}}_1, \ddd \tilde{{x}}_2) 
			\\ & \leq 
				c 
				e^{
				L 
				W}
				 \int_{\mathbb{R}^{4d}} 
			u_0(\tilde{{x}}_1) 
			u_0(\tilde{{x}}_2) 
				\; \gamma(\ddd {x}_1, \ddd {x}_2, \ddd \tilde{{x}}_1, \ddd \tilde{{x}}_2) 
			= e^{LW}
				u_1(\tilde{{x}})
				\, .
		\end{align*}
	The desired conclusion follows after exchanging the roles of $x$ and $\tilde x$.
	\end{proof}

	\subsection{Analysis of a localised problem}
	
	As in \cite[Sec.~4]{Calvez-Poyato-Santambrogio:2023}, we study an auxiliary localised problem. 
	Specifically, for $R>0$, we consider the localised selection function
	\[
	m_R(x) \coloneqq m(x) + \chi_{\overline{B_R}} \, ,
	\]
	where $\chi$ denotes the convex indicator function,        
	i.e.,
	\[
	\chi_A (x) = \begin{cases}
		0 &\text{ if } x\in A \, ,
		\\
		+\infty &\text{otherwise}\, .
	\end{cases}
	\]
	The corresponding localised operator is given by
	\[
	\cT_R[F](x) = e^{-m_R(x)}   \int_{\R^{2d}} G\Big(x - \frac{x_1 + x_2}{2}\Big)  \frac{F(x_1) F(x_2)}{\|F\|_{L^1}} \dd{x_1} \dd{x_2}\, .
	\]
	In this section we establish the existence of a quasi-stationary distribution for the localised problem, adapting the proof of 
		\cite[Thm. 4.1(i)]{Calvez-Poyato-Santambrogio:2023}.
	
	\begin{theorem}\label{thm:sol-trunc-prob}
		Let $R > 0$. 
		There exists $\bflambda_R \in (0,1)$ and a $\beta$-log-concave probability density $\bfF_R$ on $\R^d$ 
		that 
		is 
		bounded away from $0$ on its support
		$\overline{B_R}$,
		and 
		satisfies
		\[
		\cT_R[\bfF_R] = \bflambda_R \bfF_R 
		\, .
		\]  
	\end{theorem}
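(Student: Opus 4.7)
The plan is to apply a Schauder-type fixed-point theorem to the normalized map $\Phi(F):=\cT_R[F]/\|\cT_R[F]\|_{L^1}$, exploiting a key log-concavity-preservation property of $\cT_R$ together with the defining identity $\beta=\alpha+\frac{\beta}{\beta+1/2}$.

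\textbf{Step 1 (Preservation of $\beta$-log-concavity).} First, I would show that if $F$ is a $\beta$-log-concave probability density supported on $\overline{B_R}$, then so is $\Phi(F)$. Writing $\cR[F]$ as the density of $(X_1+X_2)/2+Z$ with $X_1,X_2\simind F$ and $Z\sim\gauss{0,I_d}$ independent, a change of variable combined with Lemma \ref{lem:convolution} shows that $(X_1+X_2)/2$ is $2\beta$-log-concave; convolving with the standard Gaussian (again via Lemma \ref{lem:convolution}) then yields that $\cR[F]$ is $\frac{\beta}{\beta+1/2}$-log-concave. Multiplying by $e^{-m_R}$, whose potential is $\alpha$-convex on $\overline{B_R}$ and $+\infty$ outside, produces a density that is $\bigl(\alpha+\frac{\beta}{\beta+1/2}\bigr)=\beta$-log-concave by the defining equation for $\beta$.

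\textbf{Step 2 (Fixed-point argument).} Next, I would apply a Tychonov--Schauder fixed-point theorem to $\Phi$. The set of $\beta$-log-concave probability densities supported on $\overline{B_R}$ is compact in the weak topology (as a weakly closed subset of probability measures on the compact set $\overline{B_R}$) and invariant under $\Phi$ by Step 1, but it fails to be convex under standard convex combinations, since linear mixtures of log-concave measures are not generally log-concave. To circumvent this, I would parametrize $\beta$-log-concave densities by their convex potentials $U=-\log F$: the set of $\beta$-convex functions on $\overline{B_R}$ (extended by $+\infty$ outside and normalized to give a probability density) is convex, and Step 1 translates into invariance under the induced map $U\mapsto -\log\Phi(e^{-U})$. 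Continuity follows from smoothness and uniform boundedness of the Gaussian kernel $G$; compactness follows from standard estimates on bounded convex functions together with the uniform lower bound on the densities produced by $\Phi$ derived in Step 3. Schauder then yields $\bfF_R$ with $\cT_R[\bfF_R]=\bflambda_R\bfF_R$, where $\bflambda_R:=\|\cT_R[\bfF_R]\|_{L^1}$.

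\textbf{Step 3 (Remaining quantitative properties).} For any $x,x_1,x_2\in\overline{B_R}$ one has $|x-(x_1+x_2)/2|\leq 2R$, so $G(x-(x_1+x_2)/2)\geq (2\pi)^{-d/2}e^{-2R^2}$; averaging over $(x_1,x_2)\sim\bfF_R\otimes\bfF_R$ yields $\cR[\bfF_R](x)\geq (2\pi)^{-d/2}e^{-2R^2}$ on $\overline{B_R}$, whence $\bfF_R(x)\geq e^{-\max_{\overline{B_R}}m}(2\pi)^{-d/2}e^{-2R^2}/\bflambda_R>0$ for $x\in\overline{B_R}$. The same estimate gives $\bflambda_R>0$. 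The upper bound $\bflambda_R<1$ follows from the observation that $\cR[\bfF_R]$, as a Gaussian convolution of a measure supported on $\overline{B_R}$, has strictly positive mass on the complement of $\overline{B_R}$, so $\int_{\overline{B_R}}\cR[\bfF_R]<1$, and $e^{-m_R}\leq 1$ on $\overline{B_R}$.

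\textbf{Main obstacle.} The principal difficulty is the non-convexity of the class of $\beta$-log-concave probability measures, which obstructs a direct Schauder argument in the space of densities. The workaround via convex potentials requires care in choosing the right topology and in controlling the boundary behavior of convex functions on $\overline{B_R}$; an alternative bootstrap approach---applying Schauder on all probability measures on $\overline{B_R}$ and then improving log-concavity iteratively via the recursion $\gamma_{k+1}=\alpha+\gamma_k/(\gamma_k+1/2)\to\beta$---avoids this issue but introduces the secondary problem of establishing an initial log-concavity estimate $\gamma_0\geq 0$ for the a priori unstructured fixed point.
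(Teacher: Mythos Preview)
Your approach is genuinely different from the paper's. The paper does \emph{not} use a Schauder-type argument; instead it iterates $F_{n+1}=\cT_R[F_n]$ from a concrete Gaussian-truncated initial datum and proves that the sequence $(\nabla\log F_n)_n$ is Cauchy in $C(\overline{B_R})$. The Cauchy property comes from the one-step contraction
\[
\fisinf{F_{n+1}}{F_n}\le\bigl(\tfrac12+\beta\bigr)^{-1}\fisinf{F_n}{F_{n-1}},
\]
which is itself an application of the paper's main new tool, the anisotropic $W_\infty$ bound (Corollary \ref{cor:main-W-infty}). So the existence proof in the paper is tightly coupled to the transport estimates of Section~\ref{sec:transport-bounds}, and as a by-product it yields convergence of the iterates, not merely existence of a fixed point. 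Your route, by contrast, would bypass the $W_\infty$ machinery entirely and give a softer, more self-contained existence statement --- at the cost of providing no information about convergence of iterates or rates.

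There is, however, a concrete gap in Step~2. The set of $\beta$-convex potentials on $\overline{B_R}$ \emph{normalized so that $e^{-U}$ is a probability density} is \emph{not} convex: if $\int e^{-U_1}=\int e^{-U_2}=1$ then H\"older gives $\int e^{-(tU_1+(1-t)U_2)}\le 1$ with strict inequality in general, so the constraint is not preserved under convex combinations. You need to quotient by additive constants (e.g.\ fix $\min U=0$ and let the map re-normalize), and then the compactness issue becomes real: $\beta$-convex functions on a ball with $\min U=0$ have no a priori upper bound. The fix is the one you hint at --- use that $\cR[F]$ is a Gaussian convolution, so by Lemma~\ref{lem:upp-bound-hess-pot-gauss-conv} the output potential has Hessian $\preccurlyeq I_d+\hess m$, which together with your Step~3 lower bound confines the image of $\Phi$ to a set that is convex, compact (Arzel\`a--Ascoli in $C^1$), and invariant. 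With these repairs the Schauder route should go through, but as written the convexity claim is false and the compactness is not justified.
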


        The following result, proved in \cite[Lem.~2.2, 2.3]{Calvez-Poyato-Santambrogio:2023}, is an immediate consequence of the fact that log-concavity is preserved by convolution (Lemma \ref{lem:convolution}) and pointwise multiplication with log-concave functions.
	\begin{lemma}[Preservation of log-concavity]\label{lem:log-conc-prop}
		Let $R > 0$.
		If $F$ is $\kappa$-log-concave for some $\kappa > 0$, 
		then 
		$\cT[F]$ and $\cT_R[F]$ are $\kappa'$-log-concave
		with 
		$\kappa' := 
			\alpha + \frac{2\kappa}{1+2\kappa}$.
		In particular, 
		if $F$ is $\beta$-log-concave, then
		$\cT[F]$ and $\cT_R[F]$ are $\beta$-log-concave as well.
	\end{lemma}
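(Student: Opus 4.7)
The plan is to use Lemma \ref{lem:convolution} (convolution preserves log-concavity) after rewriting $\cR[F]$ as a convolution of rescaled copies of $F$ with a Gaussian, and then to account for the selection factor $e^{-m_R}$ separately.

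First I would observe that, up to the normalising constant $\|F\|_{L^1}$ (which does not affect log-concavity), $\cR[F]$ is the density of
$Y = \tfrac{X_1 + X_2}{2} + Z$
where $X_1, X_2$ are i.i.d.~with density $f := F/\|F\|_{L^1}$ and $Z \sim \gauss{0, I_d}$ is independent. Equivalently, writing $f_{1/2}$ for the density of $X_i/2$, we have $\cR[F] \propto f_{1/2} * f_{1/2} * \gamma_{I_d}$. If $F = e^{-U}$ with $U$ being $\kappa$-convex, then $f_{1/2}(y) = 2^d f(2y)$ has log-density $U(2y) + \text{const}$, with Hessian $4\,\hess U(2y) \succcurlyeq 4\kappa I_d$; hence $f_{1/2}$ is $4\kappa$-log-concave. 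Since $\gamma_{I_d}$ is $1$-log-concave, two applications of Lemma \ref{lem:convolution} give that $\cR[F]$ is $K$-log-concave with
\begin{equation*}
K^{-1} = \frac{1}{4\kappa} I_d + \frac{1}{4\kappa} I_d + I_d = \frac{1 + 2\kappa}{2\kappa} I_d \, ,
\end{equation*}
i.e.,~$\cR[F]$ is $\frac{2\kappa}{1+2\kappa}$-log-concave.

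Next, since $\cT[F] = e^{-m} \cR[F]$ and $m$ is $\alpha$-convex by assumption, the function $-\log \cT[F] = m - \log \cR[F]$ is a sum of an $\alpha$-convex function and a $\frac{2\kappa}{1+2\kappa}$-convex function; hence it is $\kappa'$-convex with $\kappa' = \alpha + \frac{2\kappa}{1+2\kappa}$, proving the claim for $\cT[F]$. For $\cT_R[F]$ the only change is that $m$ is replaced by $m_R = m + \chi_{\overline{B_R}}$. Since $\chi_{\overline{B_R}}$ is convex as a function with values in $\R \cup \{+\infty\}$, the sum $m_R$ remains $\alpha$-convex, and the same argument yields $\kappa'$-log-concavity of $\cT_R[F]$.

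Finally, for the ``in particular'' statement it suffices to verify that $\beta$ is a fixed point of the map $\kappa \mapsto \alpha + \frac{2\kappa}{1+2\kappa}$. This follows directly from the defining relation $\beta = \alpha + \frac{\beta}{1/2 + \beta}$ by multiplying numerator and denominator of the last fraction by $2$, so that setting $\kappa = \beta$ in $\kappa' = \alpha + \frac{2\kappa}{1+2\kappa}$ returns $\kappa' = \beta$. There is no substantial obstacle here: the only point to keep track of carefully is the factor of $4$ arising from the rescaling $X_i \mapsto X_i/2$ inside the convolution, which is precisely what produces the sharp constant $\frac{2\kappa}{1+2\kappa}$ instead of $\frac{\kappa}{1+\kappa}$.
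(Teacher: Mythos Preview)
Your proof is correct and follows exactly the approach indicated in the paper, which merely cites \cite[Lemmas 2.2, 2.3]{Calvez-Poyato-Santambrogio:2023} and remarks that the result is an immediate consequence of Lemma \ref{lem:convolution} together with the fact that pointwise multiplication by a log-concave function preserves (and strengthens) log-concavity. Your write-up supplies precisely the details behind that remark, including the key rescaling step that turns the $\kappa$-log-concavity of $F$ into $4\kappa$-log-concavity of $f_{1/2}$.
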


	The key ingredient in the proof of 
		Theorem \ref{thm:sol-trunc-prob} 
	is the following contractivity estimate.
	
	\begin{lemma}
		\label{lem:cauchy-inf-fish}
		Define $F_0 \in L^1_+(\R^d)$ by 
		$F_0(x) = 
		\exp
		\bigl(- \frac{\beta}{2}\norma{x}^2
		- \chi_{\overline{B}_R(0)}(x)
		\bigr)
		$
		and set $F_{n + 1} \coloneqq T_R[F_{n}]$ for $n \geq 0$. 
		Then, for all $n \geq 1$:
		\[
		\fisinf{F_{n+1}}{F_{n}} 
		\leq 
		\bigl(\tfrac12 + \beta\bigr)^{-1}
		\fisinf{F_{n}}{F_{n-1}} 
		\, .
		\]
	\end{lemma}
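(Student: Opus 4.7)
My plan is to reduce the claim to the combination of Lemma~\ref{lem:dual-inf-fish} and Corollary~\ref{cor:main-W-infty}, following the template of the one-step contractivity argument from \cite{Calvez-Poyato-Santambrogio:2023} but exploiting the anisotropic transport bound to replace the two-dimensional Monge--Amp\`ere argument. First I would check by induction, using Lemma~\ref{lem:log-conc-prop} and the fact that $F_0$ is $\beta$-log-concave with support $\overline{B_R}$, that each $F_n$ is $\beta$-log-concave, supported on $\overline{B_R}$, and smooth and strictly positive on $\overline{B_R}$ (the positivity comes from $m_R$ being finite on $\overline{B_R}$ and $G>0$; continuity of $\cT_R[F_{n-1}]$ follows from Gaussian convolution). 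By compactness of $\overline{B_R}$, the ratio $u_n := F_{n+1}/F_n$ is then bounded between two positive constants on $\overline{B_R}$, and $\log u_n$ is Lipschitz with Lipschitz constant exactly $\fisinf{F_{n+1}}{F_n}$ by Definition~\ref{def:infinity-Fisher}.

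\paragraph{Recursion for $u_n$.}
Writing $F_n(x_i) = u_{n-1}(x_i) F_{n-1}(x_i)$ in the definition $F_{n+1}(x) = \cT_R[F_n](x)$ and dividing by $F_n(x) = \cT_R[F_{n-1}](x)$, the factor $e^{-m_R(x)}$ cancels and I obtain the identity
\[
u_n(x) \;=\; \frac{\|F_{n-1}\|_{L^1}}{\|F_n\|_{L^1}} \int_{\R^{2d}} P_{n-1}(x_1,x_2;x)\, u_{n-1}(x_1)\, u_{n-1}(x_2)\, \dd x_1 \dd x_2,
\]
where $P_{n-1}(\,\cdot\,;x)$ is the probability density on $\R^{2d}$ proportional to $F_{n-1}(x_1) F_{n-1}(x_2)\, G\bigl(x - \tfrac{x_1+x_2}{2}\bigr)$. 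This matches the template of Lemma~\ref{lem:dual-inf-fish} with constant $c = \|F_{n-1}\|_{L^1}/\|F_n\|_{L^1}$, and the kernels $P_{n-1}(\,\cdot\,;x)$ have exactly the shape~\eqref{eq:P-shape} when one sets $F(x_1,x_2) := F_{n-1}(x_1) F_{n-1}(x_2)/\|F_{n-1}\|_{L^1}^2$.

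\paragraph{Applying the transport contraction.}
Set $L := \fisinf{F_n}{F_{n-1}}$, and extend $\log u_{n-1}$ from $\overline{B_R}$ to an $L$-Lipschitz function on $\R^d$ via Kirszbraun, as in the proof of Theorem~\ref{thm:funct-ineq}. Lemma~\ref{lem:dual-inf-fish} then gives
\[
|\log u_n(x) - \log u_n(\tilde x)| \;\le\; L \cdot W_{\infty,1}\bigl(P_{n-1}(\,\cdot\,;x),\, P_{n-1}(\,\cdot\,;\tilde x)\bigr) .
\]
Since $F_{n-1}$ is $\beta$-log-concave on $\R^d$, the product $F(x_1,x_2) = F_{n-1}(x_1) F_{n-1}(x_2)/\|F_{n-1}\|_{L^1}^2$ is $\beta$-log-concave on $\R^{2d}$, so Corollary~\ref{cor:main-W-infty} applies (using $\beta > 1/2$) and yields
\[
W_\infty\bigl(P_{n-1}(\,\cdot\,;x),\, P_{n-1}(\,\cdot\,;\tilde x)\bigr) \;\le\; \frac{1}{\tfrac12 + \beta} \cdot \frac{|x - \tilde x|}{\sqrt{2}} .
\]
Combining with the elementary inequality $W_{\infty,1} \le \sqrt{2}\, W_\infty$ gives $W_{\infty,1}(P_{n-1}(\,\cdot\,;x), P_{n-1}(\,\cdot\,;\tilde x)) \le (\tfrac12 + \beta)^{-1} |x - \tilde x|$, and taking the supremum over $x \neq \tilde x$ in $\overline{B_R}$ produces the desired bound $\fisinf{F_{n+1}}{F_n} \le (\tfrac12 + \beta)^{-1} \fisinf{F_n}{F_{n-1}}$.

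\paragraph{Main obstacle.}
The heavy lifting has already been done in Corollary~\ref{cor:main-W-infty}, where the anisotropic decomposition into symmetric and antisymmetric subspaces produces the sharp factor $(\tfrac12 + \beta)^{-1}$ rather than the loose $\beta^{-1}$ that the isotropic Corollary~\ref{cor:log-lip-perturb-coupl} would yield; without this improvement the constant in Lemma~\ref{lem:cauchy-inf-fish} would not close up the contraction in the required regime. The only additional care required here is the bookkeeping in the Setup paragraph, ensuring $\log u_n$ is actually Lipschitz so that Lemma~\ref{lem:dual-inf-fish} applies.
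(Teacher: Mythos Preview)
Your proposal is correct and follows essentially the same approach as the paper: derive the recursion for the ratios $u_n$, invoke Lemma~\ref{lem:dual-inf-fish}, and close with Corollary~\ref{cor:main-W-infty} together with $W_{\infty,1}\le\sqrt{2}\,W_\infty$. The only cosmetic differences are that the paper indexes $u_n:=F_n/F_{n-1}$ (a shift by one from your $u_n:=F_{n+1}/F_n$) and, instead of extending $\log u_{n-1}$ via Kirszbraun to apply Lemma~\ref{lem:dual-inf-fish} verbatim, simply reruns the argument of that lemma on $\overline{B_R}$.
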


        \begin{proof}
		Set $B_R := {B}_R(0)$ for brevity,
		and 
		define $u_n \coloneqq \frac{F_n}{ F_{n-1}}$ for 
			$n\ge 1$.     
         Note that $u_n$ is of class $C^1$ on $\overline{B_R}$; moreover, it is strictly positive there, since so is $F_n$ (by induction, using that the integral of a strictly positive function on a set of strictly positive measure is also strictly positive).
		Using the identities
		\begin{align*}
            F_{n}(x) & =   \frac{e^{-m_R(x)}}{\normad{F_{n-1}}_{L^1}} \int_{\R^{2d}} F_{n-1}(x_1) F_{n-1}(x_2)  
			G\Bigl(x - \frac{x_1 + x_2}{2}\Bigr) \dd x_1 \dd x_2\, , \\
			F_{n+1}(x)
            & =  \frac{e^{-m_R(x)}}{\normad{F_n}_{L^1}} \int_{\R^{2d}} F_{n-1}(x_1)u_n(x_1) F_{n-1}(x_2) u_n(x_2) 
			G\Bigl(x - \frac{x_1 + x_2}{2}\Bigr)\dd x_1 \dd x_2 \, , 
        \end{align*}
		we obtain the recursion relation
		\begin{equation}
			u_{n+1}(x) 
			= 
			 \frac{\normad{F_{n-1}}_{L^1}}{\normad*{F_{n}}_{L^1}}
			\int_{\R^{2d}} 
			P_n(x_1,x_2;x) u_{n}(x_1) u_{n}(x_2) 
			\dd x_1 \dd x_2 
		\end{equation}
		for $x\in \clobal$ and $n\ge 1$,
		with $n$-dependent transition rates 
		\[
		P_n(x_1,x_2;x) 
		= \frac{1}{Z_{n}(x)}  
		{F_{n-1}(x_1) F_{n-1}(x_2) }
		G\Big(x - \frac{x_1 + x_2}{2}\Big) \,,
		\]
        where $Z_{n}(x) > 0$ is the normalising constant
		ensuring that $P_n(\cdot \, ;  x)$
			is a probability density for all $x \in \R^d$.
		Arguing as in the proof of 
			Lemma \ref{lem:dual-inf-fish},
		we infer that 
		\begin{align*}
			|\log u_{n+1}({x}) 
				- 
			\log u_{n+1} (\Tilde{{x}})|
			\leq 
		\| \nabla \log u_n \|_{L^\infty(\overline B_R)} \,
		W_{\infty, 1}\bigl( P_n(\cdot; {x}), P_n(\cdot; \Tilde{{x}})\bigr)
	\end{align*}
	for all ${x}, \Tilde{{x}} \in 
		\overline{B_R}$.
	Since
	$F_n$ is $\beta$-log-concave 
	by Lemma \ref{lem:log-conc-prop}, 
	Corollary \ref{cor:main-W-infty}
	yields, 
	in view of the elementary 
	comparison of norms
		$|(x_1, x_2)|_1 \leq \sqrt{2} |(x_1, x_2)|$ 
	for $x_1, x_2 \in \R^d$,
	\begin{align*}
		W_{\infty,1}
				\bigl( 
				P_n(\cdot; x) , P_n(\cdot; \tilde x)
				\bigr) 
				\leq 
			\sqrt{2}
			W_{\infty}
				\bigl( 
				P_n(\cdot; x) , P_n(\cdot; \tilde x)
				\bigr) 
			\le 
			\frac{| x - \tilde x|}{\frac12 + \beta}  \, .
	\end{align*}
	Combining these inequalities, we find
	\begin{align*}
		\| \nabla \log u_{n+1} \|_{L^\infty(\overline B_R)}
		\leq \bigl(\tfrac12 + \beta\bigr)^{-1}
		\| \nabla \log u_n\|_{L^\infty(\overline B_R)}\, ,
	\end{align*}
	which is the desired inequality.
\end{proof}

	\begin{proof}[Proof of Theorem \ref{thm:sol-trunc-prob}]
		
		Set $F_0 =  \frac{1}{Z} \exp
		\bigl(- \frac{\beta}{2}\norma{x}^2
		- \chi_{\overline{B_R}}
		\bigr)$ as in Lemma \ref{lem:cauchy-inf-fish}
		and define 
		$F_{n+1} = \cT_R [F_n]$
		for $n \geq 0$,
		and 
		write $V_n := - \log F_n$.
		Clearly, the restriction of $F_n$ to $\overline{{B_R}}$ 
		(which will simply be denoted by $F_n$ as well)
		is bounded away from $0$ and it
		belongs to 
		$C^1(\overline{{B_R}})$ for all $n \geq 0$.
		Adapting arguments from 
		\cite{Calvez-Poyato-Santambrogio:2023}, 
		we will show that 
		$\log \bigl(  F_n / \| F_n\|_{L^1}  \bigr)$ 
		converges in $C(\overline{{B_R}})$ as $n \to \infty$.
		This statement will follow from two claims.		
		
		Firstly, we claim that 
		$\nabla \log \bigl(  F_n / \| F_n\|_{L^1}  \bigr) =  -\nabla V_n$ 
		converges in $C(\overline{{B_R}})$ as $n \to \infty$.	 
		To prove this, we observe that 
		Lemma \ref{lem:cauchy-inf-fish} 
		yields
		\begin{align*}
			\fisinf{F_{n+1}}{F_{n}} 
			\leq 
			\bigl(\tfrac12 + \beta\bigr)^{-n}\,
			\fisinf{F_{1}}{F_{0}} \, .
		\end{align*}
		Since 
		$\fisinf{F_{n+1}}{F_{n}}
		= 
		\| 
		\nabla V_n -  \nabla V_{n+1} 
		\|_{C(\overline{{B_R}})}$
		,
		the sequence $\nabla V_n$ is Cauchy in $C(\overline{{B_R}})$, hence convergent. 
		
		Secondly, we claim that 
			$\frac{F_n(0)}{\|F_n\|_{L^1}}$ 
		converges in $\R$ as $n \to \infty$. 
		To show this, we use the identity
		\begin{equation*}
			\frac{F_n(x)}{\|F_n\|_{L^1}}
			=
			\frac{
				\int_{\qbr} G\tond*{x - \frac{x_1+x_2}{2}} 
				\exp 	  \bigl(- m(x) 
				- v_{n}(x_1)- v_{n}(x_2) \bigr) \dd x_1 \dd x_2 }
			{\iint_{\qbr\times \clobal}G\tond*{x'-\frac{x_1+x_2}{2} } \exp   \bigl(
				-m(x')
				- v_{n}(x_1)- v_{n}(x_2)
				\bigr)
				\dd x_1 \dd x_2 \dd x' } \, ,
		\end{equation*}
		where we write $v_n(x) = V_{n-1}(x) - V_{n-1}(0)$ for brevity. 
		Note that the artifically introduced factors $e^{V_{n-1}(0)}$ cancel out. 
		Writing
		$v_{n}(x) = x\cdot \int_0^1 \nabla V_{n-1}(\theta x) \dd \theta$ 
		we infer from the first claim that $v_{n}$ converges uniformly. 
		Therefore, the second claim follows using dominated convergence.
		
		The two claims combined imply that 
		$\log \bigl(  F_n / \| F_n\|_{L^1}  \bigr)$ 
		converges in $C(\overline{{B_R}})$ as $n \to \infty$.
		Let $-\bfV_R$ be its limit, and define 
		$\bfF_R := \exp\bigl(-\bfV_R - \chi_{\overline{B_R}}\bigr)$.
		It remains to verify that $\bfF_R$ has the desired properties.
		
		Since $\bfV_R$ is bounded, it follows that $\bfF_R$ is bounded away from $0$ on its support $\overline{B_R}$.
		
		To prove the identity  
		$
		\cT_R\quadr*{\bfF} = \bflambda_R
		\bfF_R$ 
		we write
		\[
		\frac{ \| F_{n+1} \|_{L^1} }
			 { \| F_{n}   \|_{L^1} }  
		= 
		\int_{\qbr} H_R(x_1,x_2) 
		\frac{F_{n}(x_1)}{\|F_{n}\|_{L^1}} 
		\frac{F_{n}(x_2)}{\|F_{n}\|_{L^1}} \dd x_1 \dd x_2 \, ,
		\]
		where 
		$H(x_1,x_2) 
			= 
			\int_{\clobal} 
				e^{-m(x)}
				G\bigl(x-\frac{x_1+x_2}{2}\bigr) 
			\dd x$
		is bounded.
		Since $H_R$ is bounded and 
			${F_{n}}/{\|F_{n}\|_{L^1}}$ 
		converges uniformly by the first part of the proof,
		we infer that 
		$\frac{ \| F_{n+1} \|_{L^1} }
			  { \| F_{n}   \|_{L^1} }  
			\to \bflambda_R$ 
		for some 
			$\bflambda_R > 0$.
		Since $\frac{F_n}{\|F_n\|_{L^1}} \to \bfF$ 
		in $C(\overline{B_R})$, 
		it follows that 
			$\cT_R\quadr*{ \frac{F_n}{\|F_n\|_{L^1}} }
				\to 
			\cT_R\quadr*{\bfF}$.
		On the other hand,
		\[
		\cT_R\quadr*{ \frac{F_n}{\|F_n\|_{L^1}} }
		= 
		\frac{ \| F_{n+1} \|_{L^1}}
			 { \|F_{n}    \|_{L^1}}
		\frac{F_{n+1}}{\|F_{n+1}\|_{L^1}} 
		\to
		\bflambda_R
		\bfF_R
		\]
		as $n \to \infty$.
		This yields the desired identity 
		$
		\cT_R \quadr*{\bfF} 
		= 
		\bflambda_R \bfF_R$.
		
		\smallskip
		Since $F_0$ is a $\beta$-log-concave density, 
		so 
		are all $F_n$ by Lemma \ref{lem:log-conc-prop}.
		Therefore, the functions 
		$-\log \bigl(  F_n / \| F_n\|_{L^1}  \bigr)$ 
		are $\beta$-convex, 
		and so is their uniform limit $\bfV_R$.
		It follows that
		$\bfF_R$ is $\beta$-log-concave.

		\smallskip
		Finally, we will show that 
		$\bflambda_R \in (0,1)$. 
		Indeed, 
		since $\bfF_R$ is quasi-stationary, 
		we have
            \[
                \bflambda_R \bfF_R(x) e^{m_R(x)} =   \int_{\R^{2d}} G\Big(x - \frac{x_1 + x_2}{2}\Big)  \bfF_R( x_1) \bfF_R({x_2}) \dd{x_1} \dd{x_2}\, .
            \]
        From this, 
		it is immediate to see that $\bflambda_R>0$, by choosing $x=0$.
            To see that $\bflambda_R<1$, it suffices to integrate over $x\in \R^d$ on both sides.
            Indeed, there exists a small $\delta\in(0,R)$ such that 
			$c_\delta := \int_{B_\delta(0)} \bfF_R(x) \dd x <1$. But then, using the assumptions on $m$, 
            \begin{align*}
                \int_{ \R^d} e^{m_R(x)}\bfF_R(x) \dd x 
                & \ge  
                    e^{{\alpha}\delta^2/{2}}\int_{B_\delta(0)} \bfF_R(x) \dd x + \int_{B_\delta(0)^c}\bfF_R(x) \dd x 
				\\& = 
                    c_\delta e^{{\alpha}\delta^2/{2}} + (1-c_\delta) > 1 \,,
            \end{align*}
            while
            \[
                \int_{\R^d} \int_{\R^{2d}} G\Big(x - \frac{x_1 + x_2}{2}\Big)  \bfF_R( x_1) \bfF_R({x_2}) \dd {x_1} \dd {x_2}\, \dd x = 1
				\, .
            \]
			Consequently, $\bflambda_R \in (0,1)$. 
	\end{proof}
	
	\subsection{Existence of a \texorpdfstring{$\beta$}{beta}-log-concave quasi-equilibrium}
 
	The following second-moment bound is an analogue of \cite[Prop. 5.1]{Calvez-Poyato-Santambrogio:2023}, but the proof is based on different arguments that seem more convenient in the multi-dimensional setting.
	In particular, we use various properties of maxima of strongly log-concave densities, that are proved in Section \ref{sec:peaks-log-conc}.

	\begin{proposition}
		\label{prop:second-moment-bound}
		For $R > 0$, let $F_R$ be a solution to the localised problem given in Theorem \ref{thm:sol-trunc-prob}.
		Then:  
		\[
		\sup_{R>0} 
			\int_{\R^d} |x|^2 F_R(x) \dd x 
			< \infty \, .
		\]
	\end{proposition}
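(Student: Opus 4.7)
\emph{Plan.} My starting point is the quasi-stationarity relation $\bflambda_R F_R(x) = e^{-m(x)} \cR[F_R](x)$ on $\overline{B_R}$, where $\cR[F_R](x) = \int_{\R^{2d}} G\bigl(x - \frac{x_1+x_2}{2}\bigr) F_R(x_1) F_R(x_2) \dd x_1 \dd x_2$. Since $\cR[F_R]$ is a convolution against the standard Gaussian $G$, it satisfies the universal pointwise bound $\cR[F_R](x) \leq \|G\|_{L^\infty} = (2\pi)^{-d/2}$. Combined with the coercivity $m(x) \geq \frac{\alpha}{2}|x|^2$ (which follows from $\alpha$-convexity together with $m \geq 0$, $m(0) = 0$, and hence $\nabla m(0) = 0$), this yields the uniform Gaussian envelope
\[
F_R(x) \leq \frac{(2\pi)^{-d/2}}{\bflambda_R}\, e^{-\alpha|x|^2/2}, \qquad x \in \overline{B_R},
\]
and consequently $\int_{\R^d} |x|^2 F_R(x) \dd x \leq d\, \alpha^{-d/2-1}/\bflambda_R$. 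The problem therefore reduces to proving the uniform lower bound $\inf_{R > 0} \bflambda_R > 0$.

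For this lower bound I intend to invoke the peak lemmas for strongly log-concave densities proved in Section~\ref{sec:peaks-log-conc}, applied to $F_R$ (which is $\beta$-log-concave). The crucial inputs I expect are (i) a uniform upper bound $\|F_R\|_\infty \leq C_d\, \beta^{d/2}$, complementing the classical lower bound $\|F_R\|_\infty \geq (\beta/2\pi)^{d/2}$; and (ii) concentration of $F_R$ around its mode $x_R^*$, giving $\int |x - x_R^*|^2 F_R \dd x \leq C_d/\beta$ together with the mode-to-mean bound $|x_R^* - \bar X_R| \leq C_d/\sqrt{\beta}$, where $\bar X_R = \E_{F_R}[X]$. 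Evaluating the quasi-stationarity at $x = 0$, where $m(0) = 0$, gives $\bflambda_R F_R(0) = \cR[F_R](0) = \E[G(Y)]$ with $Y := (X_1 + X_2)/2$ and $X_i \sim F_R$ iid. Jensen's inequality applied to the concave logarithm yields $\cR[F_R](0) \geq (2\pi)^{-d/2}\exp(-\tfrac12 \E|Y|^2)$, while Brascamp--Lieb (applied via (ii)) bounds $\E|Y|^2 \leq |\bar X_R|^2 + d/(2\beta)$. Combining with (i) produces $\bflambda_R \geq c_{d,\beta}\, \exp(-|\bar X_R|^2/2)$ for some constant depending only on $d$ and $\beta$.

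The main obstacle is closing the resulting circular dependence: plugging this lower bound into the envelope estimate, and using $|\bar X_R|^2 \leq \int|x|^2 F_R$, one obtains an implicit inequality of the form $|\bar X_R|^2\, e^{-|\bar X_R|^2/2} \leq K_{d,\alpha,\beta}$. Since $u \mapsto u\, e^{-u/2}$ attains its maximum $2/e$ at $u = 2$ and decreases monotonically beyond, this constrains $|\bar X_R|^2$ either to a bounded interval near zero or, a priori, to a large branch $[u_2, \infty)$. I plan to rule out the large branch by a continuity/bootstrap argument in $R$: for $R$ small, $F_R$ is supported in $\overline{B_R}$ and so $|\bar X_R| \leq R$ trivially lies in the bounded branch, and continuous dependence of $F_R$ on $R$---traceable through the iterative construction in Theorem~\ref{thm:sol-trunc-prob}---propagates this bound to all $R > 0$. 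This continuity step is likely the most delicate part of the argument.
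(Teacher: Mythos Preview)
Your reduction to a lower bound on $\bflambda_R$ is natural, but the argument has two genuine gaps.

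\textbf{The sup-norm bound (i) is not available.} Section~\ref{sec:peaks-log-conc} contains only Lemma~\ref{lem:conc-around-argmin} (second-moment concentration around the mode) and Lemma~\ref{lem:min-sum-conve-and-con-smooth} (location of the minimiser of a sum); neither yields $\|F_R\|_\infty \le C_d\,\beta^{d/2}$. In fact, $\beta$-log-concavity alone gives only the \emph{lower} bound $\|F_R\|_\infty \ge (\beta/2\pi)^{d/2}$; any $\beta'$-log-concave density with $\beta' \gg \beta$ is still $\beta$-log-concave and can have arbitrarily large sup norm. The only upper bound available from the structure of the problem is your own envelope $\|F_R\|_\infty \le (2\pi)^{-d/2}/\bflambda_R$, which is circular. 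Without (i) you cannot conclude $\bflambda_R \ge c_{d,\beta}\,e^{-|\bar X_R|^2/2}$.

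\textbf{The implicit inequality may be vacuous.} Even granting (i), your chain of inequalities yields $u\,e^{-u/2} \le K_{d,\alpha,\beta}$ with $u = |\bar X_R|^2$. Since $\sup_{u\ge 0} u\,e^{-u/2} = 2/e$, this inequality places \emph{no} restriction whatsoever on $u$ as soon as $K_{d,\alpha,\beta} \ge 2/e$, which happens for large $d$ or small $\alpha$ (your $K$ contains a factor $d\,\alpha^{-d/2-1}$). In that regime there are no ``two branches'' to separate, and the continuity argument in $R$ --- itself not established --- has nothing to work with.

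The paper avoids both issues by working directly with the mean $\mu_R$ rather than $\bflambda_R$. Writing $\bfF_R = e^{-\bfV_R}$ and $\cR[\bfF_R] = e^{-U_R}$, the quasi-stationarity gives $\bfV_R = m_R + U_R + \log\bflambda_R$. The key structural input is the two-sided control $\alpha I_d \preccurlyeq \hess m_R$ and $\hess U_R \preccurlyeq I_d$ (the latter from Lemma~\ref{lem:upp-bound-hess-pot-gauss-conv}, since $\cR[\bfF_R]$ is a Gaussian convolution). Lemma~\ref{lem:min-sum-conve-and-con-smooth} then bounds the mode $u_R$ of $U_R$ in terms of the distance between the modes of $m_R$ (at $0$) and $\bfV_R$, and Lemma~\ref{lem:conc-around-argmin} converts modes to means. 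This gives $\sup_R|\mu_R|<\infty$ without ever touching $\bflambda_R$, and the second-moment bound follows from the Poincar\'e inequality.
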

	
	\begin{proof}
		Let $\mu_R = \int_{\R^d} x \bfF_R(x) \dd x$ 
		be the barycenter of $\bfF_R = e^{-\bfV_R}$.
		Since the measures $\bfF_R$ are $\beta$-log-concave,
		it follows from the Poincar\'e inequality
		\cite[Prop.~4.8.1]{Bakry-Gentil-Ledoux:2014}
		that
		\begin{align*}
			\sup_{R>0} \int_{\R^d} |x - \mu_R|^2 F_R(x) \dd x
			\leq \frac{d}{\beta} <\infty \, .
		\end{align*}
		Therefore, it suffices to show that 
		$\sup_{R > 0} |\mu_R| < \infty$.
		Let $v_R \in \R^d$ be the unique minimiser 
		of $ \bfV_R$. 
		Since $\bfF_R$ is $\beta$-log-concave,
		Lemma 
		\ref{lem:conc-around-argmin} 
		implies that
		\[
		|v_R - \mu_R|  \le \sqrt{\frac{d}{\beta}} \, .
		\]
		
		Define $G_R := \cR[\bfF_R]$ and write $G_R = e^{-U_R}$. 
		The barycenter $\mu_R$ of
		$\bfF_R$ is also the barycenter of $G_R$, 
		since 
		$G_R$ can be written in probabilistic terms 
		as
		$G_R = \law\bigl(\frac{X_R+\tilde X_R}{2} + Z\bigr)$,
		where $X_R, \tilde X_R$ are independent random variables with law $F_R$ and  
		$Z$ is standard Gaussian, and we have
		$\E\bigl[\frac{X_R+\tilde X_R}{2} + Z\bigr] 
		= \E[ X_R]$.
		Moreover, 
		Lemma \ref{lem:log-conc-prop} implies that 
		$G_R$ is $\tau$-log-concave with 
		$\tau := \beta/(\frac12 + \beta)$.
		Therefore,
		another application of Lemma 
		\ref{lem:conc-around-argmin} yields
		\begin{align}
			\label{eq:mu-u}
			|u_R - \mu_R|  \le \sqrt{\frac{d}{\tau}} \, ,
		\end{align}
		where $u_R \in \R^d$ denotes the unique minimiser of $U_R$.
		
		Since
		$\cT[\bfF_R] = \bflambda \bfF_R$,
		it follows that 
		$\bfV_R = m_R + U_R + \log \bflambda_R$.
		Recall that 
			$m_R$ has its unique minimiser at $0$
		and satisfies $\hess m_R \succcurlyeq \alpha I_d$.
		Observe that Lemma \ref{lem:upp-bound-hess-pot-gauss-conv}
		implies that 
		$\hess U_R \preccurlyeq I_d$.
		Therefore, 
			Lemma \ref{lem:min-sum-conve-and-con-smooth}
		implies that 
		\begin{align*}
			{\alpha} |u_R  | \leq (1+\alpha)  | u_R - v_R | \, .
		\end{align*}
		
		Combining the three inequalities above, we find
		\begin{align*}
			\frac{\alpha}{1+\alpha}  |u_R| 
			\leq 
			| u_R - v_R | 
			& \leq 
			| u_R - \mu_R | + |  v_R - \mu_R |
			\leq 
			\sqrt{\frac{d}{\tau}}  
			+ \sqrt{\frac{d}{\beta}}  \, .
		\end{align*}
		Another application of \eqref{eq:mu-u} implies that $\sup_{R > 0} |\mu_R| < \infty$, as desired.
	\end{proof}

To prove Theorem \ref{thm:main-existence},
 we can now
follow the argument from 
\cite[Thm. 5.2]{Calvez-Poyato-Santambrogio:2023}.

\begin{proof}[Proof of Theorem \ref{thm:main-existence}]
	It follows from Proposition \ref{prop:second-moment-bound} that 
	the family of probability measures 
			$\{ \bfF_R \}_{R > 0}$ 
	is tight. 
	Therefore, there exists a sequence of radii $(R_n)_n$ with $R_n\uparrow \infty$ and a limiting probability measure $\bfF$ such that
        $\bfF_{R_n}\to \bfF$ weakly.
	Then, proceeding as in the proof of \cite[Thm. 5.2]{Calvez-Poyato-Santambrogio:2023}, it follows that $\bflambda_{R_n}$ converges to some $\bflambda\in [0,1]$, that $\bfF$ is $\beta$-log-concave, and that the pair $(\bflambda,\bfF)$ satisfies 	$\cT [\bfF] = \bflambda \bfF$. 
	Proceeding as in the proof of Theorem \ref{thm:sol-trunc-prob} we also find that $\bflambda\in(0,1)$.
\end{proof}

\subsection{Exponential convergence to quasi-equilibrium}

\begin{proof}[Proof of Theorem \ref{thm:main-W-infty}]
	Recall from \eqref{eq:P-def} that 
	\begin{align}
		P(x_1, x_2;x) 
		= \frac{1}{Z_x} 
		\bfF(x_1) \bfF(x_2)
		\exp\Bigl(
			- \frac12 \Big| x - \frac{x_1 + x_2}{2}\Big|^2 
			\Bigr) \, ,		
	\end{align}
	where $\bfF$ is a
	$\beta$-log-concave  quasi-equilibrium 
	obtained in Theorem \ref{thm:main-existence}.
	Therefore, the result follows from
	Corollary \ref{cor:main-W-infty}.
\end{proof}

\begin{proof}[Proof of Corollary \ref{cor:main-conv-fisher-model}]

We follow the proof of \cite[Thm.~1.1]{Calvez-Poyato-Santambrogio:2023}; for the convenience of the reader, we reproduce the argument here.
	
\smallskip

	\eqref{it:fis-inf-contr}: \ 
	Take $0 \neq F_0 \in L^1_+(\R^d)$ with 
		$\fisinf{F_0}{\bfF} < \infty$. 
	Then we can write 
		$F_0 = u_0 \bfF$ 
	for some strictly positive 
		$u_0\in C(\R^d)$ 
	such that $\log u_0$ is $L$-Lipschitz with 
		$L: = \fisinf{F_0}{\bfF}$.
	For $n \ge 1$, 
	set 
		$u_n = \frac{F_n}{\bflambda^n \bfF}$. 
	We will show by induction that 
    	$\log u_n$ is $L_n$-Lipschitz
	with  
		$L_n = \tond*{\frac12+\beta}^{-n}\fisinf{F_0}{\bfF}$, 
	which implies \eqref{it:fis-inf-contr} in Corollary \ref{cor:main-conv-fisher-model}.
    To this end, recall that we have the recursion
     \begin{align*}
		u_{n+1}(x)
		=
		\int_{\R^{d} \times \R^{d}} 
		\frac{u_n(x_1) u_n(x_2)}{\|u_n \bfF\|_{L^1}}
		P(x_1, x_2;x)
		\dd x_1 \dd x_2
	\end{align*}
	for all $x \in \R^d$.
   Therefore, Lemma \ref{lem:dual-inf-fish} implies
        \begin{align*}
            \norma*{\log u_{n+1}(x_1)-\log u_{n+1}(x_2)} 
			\le 
			\fisinf{F_n}{\bfF}\,
			W_{\infty,1}
			\bigl( P(\cdot; {x}), P(\cdot; \Tilde{{x}})\bigr)
        \end{align*}
	for all $x_1, x_2\in \R^d$ with $x_1 \neq x_2$.
	Using the 
	elementary bound 
		$W_{\infty,1}\le \sqrt{2}W_{\infty}$
	and Theorem \ref{thm:main-W-infty}, we obtain
		\begin{align*}
			W_{\infty,1}
				\bigl( 
				P(\cdot; x) , P(\cdot; \tilde x)
				\bigr) 
				\leq 
			\sqrt{2}
			W_{\infty}
				\bigl( 
				P(\cdot; x) , P(\cdot; \tilde x)
				\bigr) 
			\le 
			\frac{| x - \tilde x|}{\frac12 + \beta}  
			\, .
        \end{align*}
		Combining these inequalities, we find
		$\fisinf{F_{n+1}}{\bfF} 
		\leq
		\big(\tfrac12 + \beta\big)^{-1}
	\fisinf{F_n}{\bfF}$,
        which implies the desired conclusion. 
        
		\smallskip

		\eqref{it:directional-Lip}: \
		For brevity, write 
			$\widehat F_n := {F_n}/{\| F_n \|_{L^1}}$.
        Since $\bfF$ is $\beta$-log-concave, 
		it satisfies a logarithmic Sobolev inequality by the Bakry--\'Emery theory \cite[Cor. 5.7.2]{Bakry-Gentil-Ledoux:2014}).
		Using this 
		and the trivial bound 
			$\fistwo{\cdot}{\bfF}
			\le 
			\fisinf{\cdot}{\bfF}^2$, 
		we deduce that
        \[
            \kldiv{ \widehat F_n}
					   {\bfF} 
			\le 
				\frac{1}{2\beta} 
					\fistwo{
						\widehat F_n}
						{\bfF} 
			\le
				\frac{1}{2\beta} 
					\fisinf{F_n}{\bfF}^2
			\le 
				\frac{\fisinf{F_0}{\bfF}^2}{2\beta\bigl(\tfrac12+\beta\bigr)^{2n}} 
				\, .
        \]
        As for the last conclusion, set $\phi := e^{-m}*G \in C_b(\R^d)$,
		and note that
        \begin{align*}
            \frac{ \| F_{n+1} \|_{L^1} }
				{ \| F_n \|_{L^1} } 
			& = 
			\int_{\R^{2d}} 
			\phi\Bigl( \frac{x_1+x_2}{2} \Bigr)
			\widehat F_n(x_1)
			\widehat F_n(x_2)
				\dd x_1 \dd x_2 \, ,
        \\
        \bflambda & = 			
			\int_{\R^{2d}}
			\phi\Bigl( \frac{x_1+x_2}{2} \Bigr) 
				\bfF(x_1) 
				\bfF(x_2) 
					\dd x_1 \dd x_2	\,,
        \end{align*}
		hence, by H\"older's inequality,
		\begin{align*}
			\abs*{ \frac{ \| F_{n+1} \|_{L^1} }{ \| F_n \|_{L^1} } 
		- \bflambda } 
		& \le 
			\|\phi\|_{L^\infty} 
			\normad*{
				\widehat F_n 
				\otimes 
				\widehat F_n 
				- \bfF \otimes \bfF 
				}_{L^1} \, .
		\end{align*}
	Using Pinsker's inequality, the tensorization of the relative entropy, and the previous step we deduce that
        \begin{align*}
			\normad*{
				\widehat F_n 
				\otimes 
				\widehat F_n 
				- \bfF \otimes \bfF 
				}_{L^1}
        &  \le  
		\sqrt{\frac12 
		\kldiv{ \widehat F_n \otimes \widehat F_n }
				{\bfF \otimes \bfF}}
         \\& \le 
		\sqrt{\kldiv{ \widehat F_n }{\bfF}}
         \le 
		\frac{\fisinf{F_0}{\bfF} }{\sqrt{2\beta}\bigl(\tfrac12+\beta\bigr)^{n}} 
		\, ,
        \end{align*}
        which gives the desired conclusion.
\end{proof}

 \section{Other information metrics}
 \label{sec:different_metrics}

 In view of the  contraction estimate 
 	$\fisinf{\cT[F]}{\bfF}
		\leq 
		\big(\tfrac12 + \beta\big)^{-1}
	\fisinf{F}{\bfF}
	$,
it is natural to ask whether analogous inequalities
hold for other functionals $\cF$, such as the relative entropy $\kldiv{\cdot}{\bfF}$ and the $L^2$ relative Fisher information 
$\fistwo{\cdot}{\bfF}$, 
which play a central role in the 
	Bakry--\'Emery theory for diffusion equations.

Here we consider the case of quadratic selection 
	$m(x) = \frac{\alpha}{2}|x|^2$ for some $\alpha > 0$
in dimension $d = 1$,
which has been analysed in detail in \cite{Calvez-Lepoutre-Poyato:2021}.
In this case, the operator $\cT$ maps Gaussian densities to multiples of Gaussian densities. 
Indeed, 
for $\mu \in \R$ and $\sigma > 0$, we have
\begin{align}
	\label{eq:recursion-rel}
	\cT[\gamma_{\mu, \sigma^2}]
		\propto 
		\gamma_{\tilde \mu, \tilde \sigma^2}\, , 
	\quad \text{ where } \
	\tilde \mu 
		= 
		\frac{\mu}{1 + \alpha \bigl(1 + \frac{\sigma^2}{2}\bigr)} 
	\tand 
	\tilde \sigma^2 
		= 
		\frac{1 + \frac{\sigma^2}{2}}{1 
			+ \alpha \bigl(1 + \frac{\sigma^2}{2}\bigr)} \, ,
\end{align}
see (3.5) in \cite{Calvez-Lepoutre-Poyato:2021}.
Moreover, the unique quasi-stationary probability distribution $\bfF$ is the centered Gaussian density with variance $\frac{1}{\beta}$,
where $\beta >\frac12$ is the log-concavity parameter in Theorem \ref{thm:main-existence}; see (1.12) and (1.13) in \cite{Calvez-Lepoutre-Poyato:2021}.  

To analyse the behaviour of the three functionals under $\cT$, 
we consider the renormalised operator
$\widehat\cT$ given by 
	$\hcT[F] := \cT[F] / \|\cT[F]\|_{L^1}$
that preserves probability densities.
Let us first consider the case where 
	$G_\mu := \gamma_{\mu, \frac1{\beta}}$ is 
	a Gaussian density 
	having the variance $\frac1\beta$ of the quasi-equilibrium $\bfF$
	with arbitrary nonzero mean $\mu \in \R$. 
Then $\hcT[G_\mu]$ is Gaussian with variance $\frac1\beta$ as well, and the three functionals contract with the same rate:
\begin{align}
	\label{eqn:mu_n_sigma_n}
	\frac{\kldiv{\hcT[G_\mu]}{\bfF}}{\kldiv{G_\mu}{\bfF}}		
	= 
	\frac{\fistwo{\hcT[G_\mu]}{\bfF}}{\fistwo{G_\mu}{\bfF}}
	= 
	\Biggl(\frac{\fisinf{\hcT[G_\mu]}{\bfF}}{\fisinf{G_\mu}{\bfF}}
	\Biggr)^2
	= \bigl( \tfrac{1}{2} + \beta \bigr)^{-2} 	< 1	\, .
\end{align}
These equalities readily follow from the following Gaussian identities,
which hold for $\mu, \bar \mu \in \R$ and $\sigma^2, \bar \sigma^2 > 0$:
\begin{align}
	\kldiv{
		\gamma_{\mu, \sigma^2}}
		{\gamma_{\bar \mu, \bar \sigma^2}}
			& = 
			\frac12\biggl(
				\frac{
				(\mu - \bar \mu)^2}{\bar \sigma^2}
			+ \log\tond*{\frac{\bar \sigma^2}{\sigma^2}} 
			- 1
			+ 
				\frac{\sigma^2}{\bar \sigma^2} 
			 \biggr)
			\, ,
			\\
			\fistwo{
				\gamma_{\mu, \sigma^2}}
				{\gamma_{\bar \mu, \bar \sigma^2}}
				& = 
				\frac{(\mu - \bar \mu)^2 
					}
						{\bar \sigma^4}
					+ \frac{
					 (\sigma^2 - \bar \sigma^2)^2}
						{\sigma^2 \bar \sigma^4} \, ,
				\\
    \fisinf{
			\gamma_{\mu, \sigma^2}}
			{\gamma_{\bar \mu, \bar \sigma^2}}
	& = \frac{| \mu - \bar \mu |}{\sigma^2} 
		\quad \text{if } 
			\sigma = \bar \sigma;  \quad 
		\text{otherwise, } \
		\fisinf{\gamma_{\mu, \sigma^2}}
		{\gamma_{\bar \mu, \bar \sigma^2}}  = + \infty \; . 
\end{align}

Next, let us suppose that $G = \gamma_{\mu, \sigma^2}$ is a Gaussian density with arbitrary mean $\mu \in \R$ and 
variance $\sigma^2 \neq \frac1\beta$.
In this case, 
$\fisinf{G}{\bfF} 
= \fisinf{T[G]}{\bfF}
= + \infty$. However, the relative entropy and the $L^2$ relative Fisher information are finite, so
one might wonder whether these functionals contract under $\hcT$
with the rate suggested by \eqref{eqn:mu_n_sigma_n}.
The following result shows that this is not the case.

\begin{proposition}
	Let $m(x) = \frac{\alpha}{2}|x|^2$ for some $\alpha > 0$, 
	and define 
	$\beta > \max \bigl\{ \frac12, \alpha \bigr\}$ 
	by $\beta = \alpha + \frac{\beta}{\frac12 + \beta}$, 
	as in Theorem \ref{thm:main-existence}.
	Then there exist Gaussian probability densities 
		$G \in L_+^1(\R)$
	such that
	\begin{align*}
			\frac{\fistwo{\hcT[G]}{\bfF}}
				{\fistwo{G}{\bfF}}
				> \bigl( \tfrac{1}{2} + \beta \bigr)^{-2} 
		\tand
			\frac{\kldiv{\hcT[G]}{\bfF}}
				{\kldiv{G}{\bfF}}
				> \bigl( \tfrac{1}{2} + \beta \bigr)^{-2} 
		\, .
	\end{align*}		
\end{proposition}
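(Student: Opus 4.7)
The plan is to exploit Gaussians whose variance is strictly smaller than the quasi-equilibrium variance $1/\beta$. For such measures, the \emph{mean} does not contract at rate $(1/2+\beta)^{-1}$ under $\hcT$, and taking $|\mu|$ large makes this slower rate dominate both $\fistwo{\cdot}{\bfF}$ and $\kldiv{\cdot}{\bfF}$ simultaneously.

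Set $a(\sigma^2) := 1+\alpha(1+\sigma^2/2)$, so that \eqref{eq:recursion-rel} reads $\hcT[\gamma_{\mu,\sigma^2}] = \gamma_{\mu/a(\sigma^2),\,\tilde\sigma^2}$. A short algebraic manipulation of the fixed-point identity $\beta = \alpha + \beta/(1/2+\beta)$ yields $a(1/\beta) = 1/2+\beta$. Since $\alpha > 0$, the map $\sigma^2 \mapsto a(\sigma^2)$ is strictly increasing, so for every $\sigma^2 \in (0,1/\beta)$ we have the strict inequality
\begin{equation*}
a(\sigma^2)^{-2} > (1/2+\beta)^{-2}.
\end{equation*}

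Fix any such $\sigma^2$ and consider the family $G_\mu := \gamma_{\mu,\sigma^2}$. From the Gaussian identities recorded after \eqref{eqn:mu_n_sigma_n}, both $\fistwo{G_\mu}{\bfF}$ and $\kldiv{G_\mu}{\bfF}$ decompose as a \emph{mean contribution} scaling like $\mu^2$ plus a \emph{variance contribution} depending only on $\sigma^2$. The operator $\hcT$ preserves this decomposition, but replaces $\mu$ by $\mu/a(\sigma^2)$ and $\sigma^2$ by $(1+\sigma^2/2)/a(\sigma^2)$, so as $|\mu|\to\infty$ the mean contribution dominates both numerator and denominator, giving
\begin{equation*}
\lim_{|\mu|\to\infty} \frac{\fistwo{\hcT[G_\mu]}{\bfF}}{\fistwo{G_\mu}{\bfF}} = \lim_{|\mu|\to\infty} \frac{\kldiv{\hcT[G_\mu]}{\bfF}}{\kldiv{G_\mu}{\bfF}} = \frac{1}{a(\sigma^2)^2}.
\end{equation*}
Combined with the strict inequality above, choosing $|\mu|$ sufficiently large produces a single Gaussian $G = G_\mu$ witnessing both strict inequalities required by the proposition.

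No serious obstacle is anticipated. The variance contributions are bounded constants once $\sigma^2$ is fixed, so the mean-squared terms genuinely dominate as $|\mu|\to\infty$; the only mildly technical step is the algebraic check that $a(1/\beta) = 1/2+\beta$, which follows from the defining equation for $\beta$ by clearing denominators.
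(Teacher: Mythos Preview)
Your proof is correct and follows essentially the same approach as the paper: both arguments fix a Gaussian with small variance, send $|\mu|\to\infty$, and identify the limiting contraction factor as $a(\sigma^2)^{-2}=\bigl(1+\alpha(1+\sigma^2/2)\bigr)^{-2}$. The only minor difference is that the paper further sends $\sigma^2\to 0$ to obtain the explicit value $(1+\alpha)^{-2}$ and then checks $1+\alpha<\tfrac12+\beta$, whereas you instead verify the sharper identity $a(1/\beta)=\tfrac12+\beta$ and use monotonicity of $a$ to conclude directly for any $\sigma^2<1/\beta$; this is a cosmetic variation on the same idea.
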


\begin{proof}
	Let $\cF$ be either 
	$\kldiv{\cdot}{\bfF}$ 
	or 
	$\fistwo{\cdot}{\bfF}$.
	Using \eqref{eq:recursion-rel} 
	we observe that 
	\begin{align*}
		\lim_{\mu \to \infty}
		\frac{\cF \bigl(\hcT[\gamma_{\mu, \sigma^2}]\bigr) }{\cF (\gamma_{\mu, \sigma^2})}		
		=  \Bigl( 
			1 + \alpha
			\bigl(
				1 + \tfrac{\sigma^2}{2}
			\bigr)
			\Bigr)^{-2} \, . 
	\end{align*}
	Consequently, 
	\begin{align}
		\label{eq:cont-fac}
		\lim_{\sigma^2 \to 0}
		\lim_{\mu \to \infty}
		\frac{\cF \bigl(\hcT[\gamma_{\mu, \sigma^2}]\bigr) }{\cF (\gamma_{\mu, \sigma^2})}		
		=  ( 
			1 + \alpha
			)^{-2} \, . 
	\end{align}
	Since
		$(1 + \alpha)^{-2}
			 > 
		(\frac{1}{2} + \beta)^{-2}$,
	the claim follows.
\end{proof}

We illustrate the behaviour of the contraction factor
	$ \displaystyle C_\cF(\mu, \sigma^2) := 
	\frac{\cF \bigl(\hcT[\gamma_{\mu, \sigma^2}]\bigr) }{\cF (\gamma_{\mu, \sigma^2})}$ 
	in Fig. \ref{fig:contraction}.

\begin{figure}[ht]
    \centering
    \begin{subfigure}[t]{0.45\columnwidth}
    \centering
    \includegraphics[width=\columnwidth]{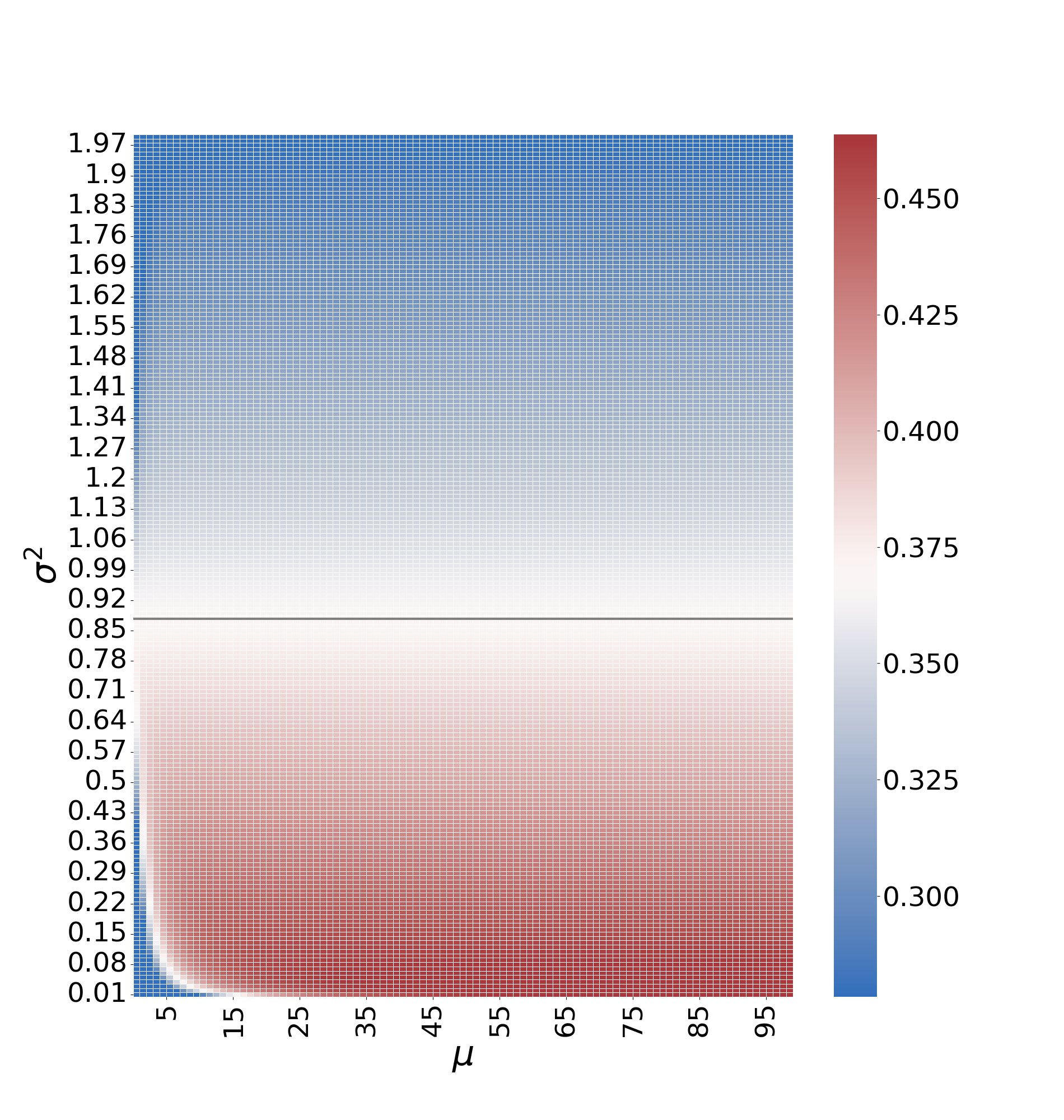}
    \caption{}
    \label{fig:L22}
    \end{subfigure}
    \begin{subfigure}[t]{0.45\columnwidth}
    \centering
    \includegraphics[width=\columnwidth]{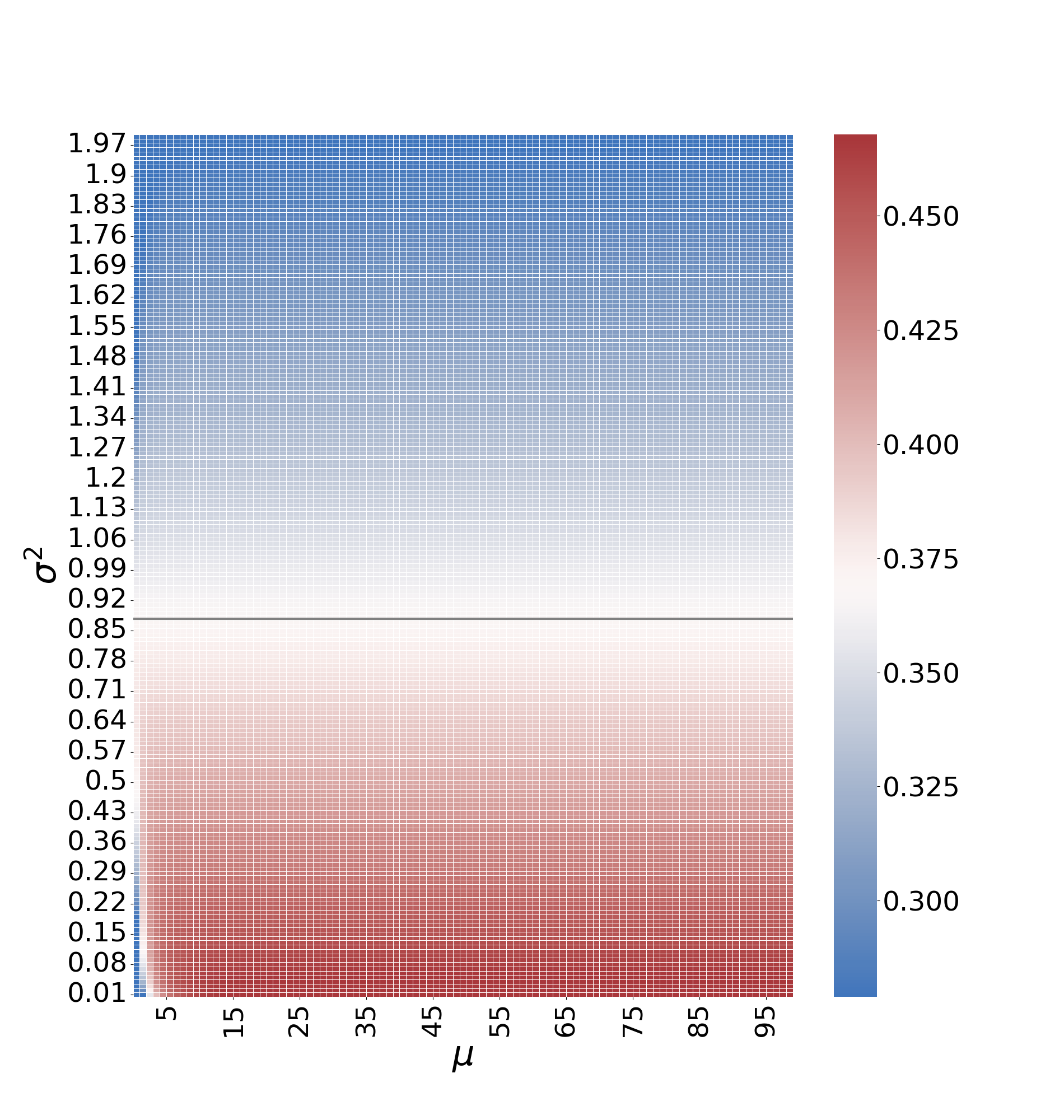}
    \caption{}
    \label{fig:Dkl2}
    \end{subfigure}
    \caption{Heatmaps of
	$C_\cF(\mu, \sigma^2)$
	for 
		(a) $\cF = \fistwo{\cdot}{\bfF}$ 
	and 
		(b) $\cF = \kldiv{\cdot}{\bfF}$.
	The chosen parameter value is $\alpha = 0.45$, and the  variance of the Gaussian quasi-equilibrium is 
	$1/\beta \approx 0.87$.
	This value is indicated by the grey line.
	The corresponding contraction factor  is 
	$\bigl(\frac{1}{2} + \beta\bigr)^{-2} \approx 0.37$
	as computed in \eqref{eqn:mu_n_sigma_n}.
	As $\sigma^2 \to 0$ after $\mu \to \infty$, the contraction factor $C_\cF(\mu, \sigma^2)$ approaches $(1 + \alpha)^{-2} \approx 0.48$, as computed in \eqref{eq:cont-fac}.}
    \label{fig:contraction}
\end{figure}

\section{Peaks of strongly of log-concave densities}
\label{sec:peaks-log-conc}

The following standard result asserts that strongly log-concave distributions concentrate around the minimiser of their potential.
Since we apply the result for general log-concave densities (not necessarily having full support on $\R^d$), we provide a detailed proof.

\begin{lemma} \label{lem:conc-around-argmin}
	Let $\mu = e^{-V}$ be a $\kappa$-log-concave probability density 
	on $\R^d$ for some $\kappa>0$.
	Assume that $V$ is lower semicontinuous, and 
	set
	$\hat{x} := \argmin V$. Then we have 
	\begin{equation}
		\int_{\R^d}  |x-\hat x|^2 \mu(x) \dd x
			\leq \frac{d}{\kappa} \, .
	\end{equation}
\end{lemma}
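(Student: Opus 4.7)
The plan is to combine a strong-convexity inequality with an integration-by-parts identity applied to $e^{-V}$. After translation, I may assume $\hat x = 0$ and $V(\hat x) = 0$; since $V$ is lsc and $\kappa$-convex with $\kappa > 0$, it is strictly convex and $\hat x$ is the unique minimizer.

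The crux of the argument, in the smooth setting where $V \in C^1$ with $\nabla V(\hat x) = 0$, is the pointwise bound
\begin{equation*}
\bip{\nabla V(x),\, x - \hat x} \;\geq\; \kappa\,|x - \hat x|^2
\qquad \text{for all } x \in \R^d.
\end{equation*}
I would derive this by invoking the definition of $\kappa$-convexity at two different base points: at $\hat x$ (evaluated at $x$) it yields $V(x) - V(\hat x) \geq \tfrac{\kappa}{2}|x - \hat x|^2$, while at $x$ (evaluated at $\hat x$) it yields $\bip{\nabla V(x),\, x - \hat x} \geq V(x) - V(\hat x) + \tfrac{\kappa}{2}|x - \hat x|^2$; adding these two produces the claim. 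The Gaussian tail decay forced by $\kappa$-log-concavity then justifies the integration-by-parts identity
\begin{equation*}
\int_{\R^d} \bip{x - \hat x,\, \nabla V(x)}\,e^{-V(x)}\dd x
\;=\; \int_{\R^d} \mathrm{div}(x - \hat x)\,e^{-V(x)}\dd x
\;=\; d,
\end{equation*}
and combining this with the pointwise bound immediately gives $\kappa\int|x-\hat x|^2\dd\mu \leq d$.

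To reduce the general statement to this smooth case I would employ the Moreau envelope
$V_\epsilon(x) := \inf_{y\in\R^d}\bigl\{V(y) + \tfrac{1}{2\epsilon}|y - x|^2\bigr\}$,
which is $C^{1,1}(\R^d)$, is $\kappa_\epsilon$-convex with $\kappa_\epsilon := \kappa/(1+\epsilon\kappa) \uparrow \kappa$ as $\epsilon \downarrow 0$, satisfies $V_\epsilon \uparrow V$ pointwise, and still attains its minimum value $V(\hat x) = 0$ at $\hat x$. Applying the smooth version of the lemma to the probability density $e^{-V_\epsilon}/Z_\epsilon$ (with $Z_\epsilon := \int e^{-V_\epsilon}$) and sending $\epsilon \downarrow 0$ via monotone convergence recovers the desired bound for $\mu$: one checks that $Z_\epsilon \to 1$ and that $|x-\hat x|^2 e^{-V_\epsilon} \downarrow |x-\hat x|^2 e^{-V}$ with an integrable dominant at, say, $\epsilon = 1$.

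The main obstacle is the approximation step. It requires a smoothing procedure that simultaneously (i) preserves $\kappa$-log-concavity up to a vanishing error, (ii) preserves the location of the minimum, and (iii) permits controlled passage to the limit in both numerator and normalizing integrals. A subtle point is that $\mu$ may have compact support (as in the truncated problem in Section~\ref{sec:application-fisher}), whereas $e^{-V_\epsilon}$ is spread over all of $\R^d$; the monotone pointwise convergence $e^{-V_\epsilon} \downarrow e^{-V}$ makes this mismatch harmless.
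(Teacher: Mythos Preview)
Your argument is correct and genuinely different from the paper's proof. In the smooth case, the paper runs the Langevin SDE $\dd X_t = -\nabla V(X_t)\dd t + \sqrt{2}\dd B_t$ started at $\hat x$, applies It\^o's formula to obtain the differential inequality $\tfrac12\ddt\E[|X_t-\hat x|^2]\leq -\kappa\,\E[|X_t-\hat x|^2]+d$, and lets $t\to\infty$. Your integration-by-parts identity $\int\langle x-\hat x,\nabla V\rangle e^{-V}\dd x = d$ combined with the monotonicity bound $\langle\nabla V(x),x-\hat x\rangle\geq\kappa|x-\hat x|^2$ reaches the same conclusion without any stochastic machinery. For the reduction to the smooth case, the paper convolves with a Gaussian, $\mu_n=\mu*\gamma_{1/n}$; this shifts the minimiser to some $\hat x_n\neq\hat x$, and the bulk of their proof is devoted to showing $\hat x_n\to\hat x$. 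Your choice of the Moreau envelope sidesteps this entirely, since $V_\epsilon$ keeps its minimum exactly at $\hat x$ and the monotone convergence $e^{-V_\epsilon}\downarrow e^{-V}$ handles the limit cleanly. Your route is shorter and more elementary; the paper's fits its probabilistic theme but pays for it in the approximation step.
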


\begin{proof} Note first that since $V$ is lower semicontinuous and $\kappa$-convex, it indeed admits a minimiser.
	The proof then consists of two steps.

	\smallskip
	\textit{Step 1.} Assume that 
		$V\colon \R^d\to \R$ is of class $C^2$ 
	and such that $\nabla V$ is Lipschitz. 
	In this case, $\hess V \succcurlyeq \kappa I_d$ 
	and there exists a solution to the Langevin equation
	\[
		dX_t = -\nabla V (X_t) dt + \sqrt{2} dB_t\,, 
		\qquad 
		X_0 = \hat{x} \, . 
	\]
	Using It\^o's formula, 
	the $\kappa$-convexity of $V$, 
	and the fact that $\nabla V(\hat x) = 0$, we find
	\begin{align*}
		\frac12 \ddt \expe {|X_t-\hat{x}|^2} &  
		=
		- \expe{\nabla V(X_t)\cdot(X_t-\hat x)} + d
		\leq 
		- \kappa \expe{|X_t-\hat x|^2} 
		+ d \, .
	\end{align*}
	Hence, 
	\[
		\expe{| X_t-\hat x|^2} \leq \frac{d}{\kappa}
	\]
	for all $t\geq 0$. 
	As $\expe{\norma*{X_t-\hat x}^2} 
	= W_2(\law(X_t), \delta_{\hat x})^2$,
	the conclusion follows by passing to the limit $t\to \infty$, 
	since $W_2(\law(X_t), \mu) \to 0$; see e.g.,~
	\cite[Thm.~11.2.1]{Ambrosio-Gigli-Savare:2008}.

	\smallskip
	\textit{Step 2.} 
	We remove the additional assumptions on $\mu$. 
	To this end, 
	define $\mu_n :=  \mu * \gamma_{\frac{1}{n}}$,
	set $V_n := - \log \mu_n$,
	and $\hat x_n := \argmin V_n$. 
	Then $\mu_n$ is $\frac{n\kappa}{n+\kappa}$-log-concave
	by
	Lemma \ref{lem:convolution}.
	Using the triangle inequality in $L^2(\mu_n)$ 
	and an application of Step 1 to $\mu_n$
	we find 
	\begin{align*}
		\bigg(\int 
		| x-\hat x |^2
		\mu_n(x)
		\dd x 
		\bigg)^{1/2}
		&\leq 
		\bigg(
		\int 
			| x - \hat x_n |^2
			\mu_n(x)
		\dd x 
		\bigg)^{1/2}
		+ 
			|\hat x_n - \hat x|
		\\
		& \leq 
		\bigg( d\frac{n+\kappa}{n\kappa}
		\bigg)^{1/2} 
		+
			| \hat x_n - \hat x | \, .
	\end{align*}
	Since $\mu_n$ converges weakly to $\mu$, 
	and $x \mapsto |x - \hat x|^2$ is continuous and bounded from below,
	we have
	\begin{align*}
		\bigg(\int 
		| x-\hat x |^2
		\mu(x)
		\dd x 
		\bigg)^{1/2}
		\leq \liminf_{n \to \infty}
		\bigg(\int 
		| x-\hat x |^2
		\mu_n(x)
		\dd x 
		\bigg)^{1/2} \, .
	\end{align*}
	Thus, to obtain the desired result, it remains to show that 
	$\norma{\hat x_n - \hat x} \to 0$.
			 
For this purpose, fix $\epsilon \in (0,1)$.
It remains to show that there exists 
	$\hat n \geq 1$
such that	
$\mu_n$ attains its maximum in 
a ball of radius $\epsilon$ around $\hat x$
whenever $n \ge \hat n$.

Let $\delta > 0$ be a small parameter, only depending on $\epsilon$, that will be specified later.

First we will argue that 
	$\mu_n$ attains a large value near $\hat x$.
For this purpose,
observe that $\dom{V}$ has non-empty interior, 
since $\mu$ is a log-concave density. 
Take $z \in \dom{V}^\circ$. 
Since $V$ is continuous on its domain,
$V$ is bounded on an open ball around $z$.
Therefore, by convexity of $V$, 
we can find 
$y \in 
	B_{\frac{\epsilon}{2}}(\hat x)
	\cap 
	\dom{V}^\circ$ 
and a radius 
$h > 0$ such that 
	$\mu(x) \ge \mu(\hat x)-\delta$ 
for all $x \in B_h(y)$.
Without loss of generality, we choose 
	$h \leq \min \{ \delta, \frac{\epsilon}{2} \}$.
Observe now that there exists a constant 
	$\hat n \geq 1$
depending only on $h$ and the dimension $d$, 
such that
\begin{align}
	\label{eq:h-choice}
	\int_{B_h(0)} \gamma_{\frac{1}{n}}(x) \dd x \geq 1-h
\end{align}
for all $n \geq \hat n$. 
Hence, for $n\ge \hat n$, \eqref{eq:h-choice} yields
		\begin{align}
			\label{eq:lower-bound-Fn-close}
			\mu_n(y)  
			\geq
			\bigl( \mu(\hat x) - \delta \bigr) (1-h) 
			\ge 
			\bigl( \mu(\hat x) - \delta \bigr) (1-\delta) \,.
	\end{align}

Next we will quantify the fact that 
	$\mu_n$ decreases fast if $|x - \hat x|$ increases.
Indeed, since $V$ is $\kappa$-convex
		and $\hat x = \argmin V$, we have
		$
			V(x) \geq V(\hat x) 
				+ \frac{\kappa}{2} |x - \hat x|^2 
		$
		for all $x \in \R^d$, 
		hence
		\begin{align*}
			\mu(x) 
				\leq 
				e^{- \frac{\kappa}{2} |x - \hat x|^2 } \mu(\hat x) \, .
		\end{align*}
		Therefore, if $|x - \hat x| > \epsilon$, another application of \eqref{eq:h-choice} yields, taking into account that $h \leq \frac{\epsilon}{2}$,
		\begin{align}
			\label{eq:upp-bound-Fn-far}
			\mu_n(x) 
				\leq 
			\sup_{|y - x| \leq h} \mu(y)
				+ h \sup_{y \in \R^d} \mu(y)
				\leq 
			\sup_{|y - \hat x| \geq \frac{\epsilon}{2}} \mu(y)
				+ h \mu(\hat x)
				\leq 
				\big( 
					e^{- \frac{\kappa}{8} \epsilon^2 } 
				+ \delta
				\big)
				\mu(\hat x) \, .
		\end{align}
		Choosing $\delta > 0$ small enough (depending on $\epsilon$),
		it follows by 
		combining \eqref{eq:lower-bound-Fn-close} and \eqref{eq:upp-bound-Fn-far} 
		that 
		\begin{align*}
			\mu_n(y) > \sup_{x: | x - \hat x | > \epsilon } \mu_n(x)\, ,
		\end{align*}
		hence $\hat x_n \in \overline B_{\epsilon}(\tilde x)$
		whenever $n \geq \hat n$, which completes the poof.
\end{proof}

\begin{lemma}\label{lem:min-sum-conve-and-con-smooth}
Let $V, U\colon \R^d \to \R\cup\graf*{+\infty}$ be
strictly convex functions such that 
\begin{enumerate}[(i)]
	\item $V$ is lower semicontinuous and $\alpha$-convex 	
		for some $\alpha > 0$;
	\item $U$ belongs to $C^1(\R^d)$, 
		it admits a minimiser, 
		and
		$\nabla U$ is $\beta$-Lipschitz for some $\beta>0$.
\end{enumerate}
Define
$x = \argmin V$, $y = \argmin U$, and 
$z = \argmin (V + U)$. Then:
\begin{align*}
	\frac{1}{\alpha}\norma*{z-y} & 
	\ge 
	\max \Big\{
	\frac{1}{\beta}\norma*{z - x} \; ,
	\frac{1}{\alpha + \beta} 
	\norma*{{y-x}}
	\Big \}.
\end{align*}
\end{lemma}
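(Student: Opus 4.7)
The plan is to exploit the first-order optimality conditions at $x$, $y$, and $z$ together with the strong convexity of $V$ and the Lipschitz gradient of $U$, and then to combine the two resulting inequalities by a triangle inequality.

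First I would record the optimality conditions. Since $V$ is proper, lower semicontinuous, and $\alpha$-convex with $\alpha>0$, and $U$ is convex and finite on $\R^d$, the sum $V+U$ is $\alpha$-convex and lower semicontinuous, so $z$ is well-defined, and $\partial (V+U)(z)=\partial V(z)+\{\nabla U(z)\}$ since $U$ is differentiable. The optimality conditions read
\[
0\in\partial V(x),\qquad \nabla U(y)=0,\qquad -\nabla U(z)\in\partial V(z).
\]

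Next I would invoke the defining monotonicity inequality of an $\alpha$-convex function: for all $u,v\in\dom V$ and all $p\in\partial V(u)$, $q\in\partial V(v)$,
\[
\langle u-v,\,p-q\rangle\ \ge\ \alpha\,\|u-v\|^2.
\]
Applied at $u=z$, $v=x$ with $p=-\nabla U(z)$ and $q=0$, this gives
\[
\langle z-x,\,-\nabla U(z)\rangle\ \ge\ \alpha\,\|z-x\|^2.
\]
Adding the (zero) term $\langle z-x,\nabla U(y)\rangle$ on the left, applying Cauchy--Schwarz, and using the $\beta$-Lipschitz continuity of $\nabla U$ yields
\[
\alpha\,\|z-x\|^2\ \le\ \langle z-x,\,\nabla U(y)-\nabla U(z)\rangle\ \le\ \|z-x\|\cdot\beta\,\|y-z\|.
\]
Dividing by $\|z-x\|$ (the case $z=x$ being trivial) produces the first bound $\tfrac{1}{\alpha}\|z-y\|\ge\tfrac{1}{\beta}\|z-x\|$.

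Finally, I would deduce the second inequality from the triangle inequality:
\[
\|y-x\|\ \le\ \|y-z\|+\|z-x\|\ \le\ \|y-z\|+\tfrac{\beta}{\alpha}\|y-z\|\ =\ \tfrac{\alpha+\beta}{\alpha}\|y-z\|,
\]
which rearranges to $\tfrac{1}{\alpha}\|z-y\|\ge\tfrac{1}{\alpha+\beta}\|y-x\|$, completing the proof. The only step that requires a bit of care is the subdifferential calculus for $V+U$ and the validity of the monotonicity inequality under the mere assumption that $V$ is lower semicontinuous rather than smooth; this is standard convex-analytic material, so I do not expect a genuine obstacle here.
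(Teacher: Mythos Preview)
Your argument is correct and takes a genuinely different route from the paper. The paper proceeds in two steps: first it proves the result under the extra hypothesis $V\in C^1(\R^d)$ via the chain $\alpha\,|z-x|\le|\nabla V(z)|=|\nabla U(z)|\le\beta\,|z-y|$ (plus the same triangle inequality you use), and then it removes the smoothness assumption on $V$ by Moreau--Yosida regularisation $V_\lambda$, proving that the approximate minimisers $z_\lambda=\argmin(V_\lambda+U)$ converge to $z$ as $\lambda\to 0$. Most of the paper's proof is spent on this convergence.

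Your approach replaces the entire approximation step by a direct appeal to subdifferential calculus: the sum rule $\partial(V+U)(z)=\partial V(z)+\{\nabla U(z)\}$ (valid because $U$ is finite and continuous on $\R^d$) together with the $\alpha$-strong monotonicity of $\partial V$. This is considerably shorter and conceptually cleaner, at the cost of invoking standard convex-analytic facts rather than keeping the proof self-contained. The two arguments are essentially the same in the smooth case; the difference lies only in how the nonsmooth $V$ is handled.
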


\begin{proof}
	Note first that since $V + U$ is lower semicontinuous and $\alpha$-convex, it indeed admits a minimiser.
	\smallskip

	\textit{Step 1.} Assume additionally that $V \in C^1(\R^d)$. 
	Then
	one of the two desired inequalities follows from
	\[
	\alpha \norma{z-x} 
	\le 
	\norma*{\nabla V(z) } 
	= 
	\norma*{\nabla U(z) } 
	\le 
	\beta \norma*{z-y} \, . 
	\]
	The other one follows by combining this inequality with the triangle inequality
	$\norma{ z - x} \geq \norma{ y - x} - \norma{ z - y}$.

	\smallskip
	\textit{Step 2.} 
	We now remove the additional assumption that $V \in C^1(\R^d)$. 
	For $\lambda > 0$, 
	we consider the Moreau-Yosida approximation $V_\lambda$ of $V$ defined by
	\[
		V_\lambda(x) = \inf_{y\in \R^d} \graf*{V(y) + \frac{1}{2\lambda} \norma{x-y}^2}\, .
	\]
	It is classical that $V_\lambda$ is of class $C^1$ and $\alpha_\lambda$-convex with $\alpha_\lambda \downarrow \alpha > 0$ as $\lambda \to 0$ (cf.~\cite[Prop. 3.1]{cle-2009}).
	Clearly, $x = \argmin V_\lambda$. 
	
	Write 
		$z_\lambda := \argmin \tond*{V_\lambda + U}$. 
	An application of Step 1 yields
	\[
			\frac{1}{\alpha_\lambda}\norma*{z_\lambda-y} 
		\ge 
		\max \Big\{
		\frac{1}{\beta}\norma*{z_\lambda - x} \; ,
		\frac{1}{\alpha_\lambda + \beta} 
		\norma*{{y-x}}
		\Big \} \, .
	\]
	Therefore, to derive the desired conclusion, it remains to prove that $z_\lambda \to z$ as $\lambda \to 0$.
	
	\smallskip
	To show this, 
	define
	$\tilde z_\lambda 
		:= \argmin_y\big\{V(y) + \frac{1}{2\lambda}\norma*{y-z_\lambda}^2\big\}
	$, so that
	\begin{align}\label{eq:Vl}
		V_\lambda(z_\lambda) = V(\tilde z_\lambda) + \frac{1}{2\lambda}\norma{z_\lambda - \tilde z_\lambda}^2 \, .
	\end{align}
	We claim that 
	there exists a compact set $\cC$ such that 
	$z_\lambda, \tilde z_\lambda \in \cC$
	for all $\lambda \in (0, 1]$.
	Let us show this.
	Since
	$V_\lambda \leq V$,
	$z_\lambda = \argmin (V_\lambda + U)$,
	and 
	\eqref{eq:Vl},
	we obtain
	\begin{align}
		V(z) + U(z) 
		& \ge 
		  V_\lambda(z)+U(z)
		\ge  
		V_\lambda(z_\lambda)+U(z_\lambda)
		\geq 
		V(\tilde z_\lambda) + \frac{1} {2\lambda}\norma*{z_\lambda - \tilde z_\lambda}^2 + U(z_\lambda)\, .
		\label{eq:lower-bound-z-zlambd-ztillambd}
	\end{align}
	Using this inequality
	and
	the fact that $x = \argmin V$ and $y = \argmin U$,
	we find 
	\begin{align*}
		V(z) + U(z) 
		\ge  
		V(x) 
		+ \frac{1} {2\lambda}
			\norma*{z_\lambda - \tilde z_\lambda}^2 
		+ U(y)\, .
	\end{align*}
	Consequently,
	\begin{equation}\label{eq:bound-zlambd-ztillambd}
		\norma*{z_\lambda - \tilde z_\lambda}^2
			\le 2\lambda M
			\, ,
				\quad 
			\text{ where }
			M :=  V(z)+U(z)-V(x)-U(y)
			\, 			.
	\end{equation}
	Since $x = \argmin V$ and $V$ is $\alpha$-convex,
	$y = \argmin U$, and \eqref{eq:lower-bound-z-zlambd-ztillambd}, 
	we deduce 
	\begin{align*}
		\frac12 | \tilde z_\lambda  - z |^2
		& \leq 
		| \tilde z_\lambda  - x |^2
			+
		| x - z |^2
		\leq 
		\frac{2}{\alpha} V(\tilde z_\lambda)
		+ 
		| x - z |^2
		\\& \leq
		\frac{2}{\alpha} 
		\Big( 
			V(\tilde z_\lambda)
			+ 
			\frac{1} {2\lambda}
				\norma*{z_\lambda - \tilde z_\lambda}^2
			+ U(z_\lambda) - U(y)
		\Big)
		+ 
		| x - z |^2
		\\& \leq
		\frac{2}{\alpha} 
		\Big( 
		V(z) + U(z) - U(y)
		\Big)
		+ 
		| x - z |^2 \, .
	\end{align*}
	Together with \eqref{eq:bound-zlambd-ztillambd}, this estimate yields the claim.	

	\smallskip
	Fix $\epsilon > 0$.
	Since $U$ is uniformly continuous on $\cC$,
	there exists $\delta \in (0,\frac{\epsilon}{2})$ such that 
	\begin{align}
		\label{eq:unif-cont}
		\abs*{U(x_1)-U(x_2)} \le 
		\frac{\alpha \epsilon^2}{8}
	\end{align}
	for all
		$x_1,x_2 \in \cC$ with $\norma*{x_1-x_2}\le \delta$.
	Define
	$
		\hat \lambda 
			:= \min\bigl\{1, 
				\frac{\delta^2}
					{2M} \bigr\}
	$.
	To complete the proof, we shall show that $\norma{z-z_\lambda} \le \epsilon$ whenever $\lambda \le \hat \lambda$.

	Note first that 
		$\norma*{z_\lambda-\tilde z_\lambda}\le \delta$
	for all $0 < \lambda \leq \hat \lambda$ 
	by
	\eqref{eq:bound-zlambd-ztillambd} and the definition of $\hat \lambda$.
	Using \eqref{eq:lower-bound-z-zlambd-ztillambd},
	\eqref{eq:unif-cont},
	the $\alpha$-convexity of $V+U$ 
	and the fact that
	 $z = \argmin (V+U)$,
	we further deduce that 
	\begin{align*}
		V(z) + U(z) 
		\ge V(\tilde z_\lambda) +U(z_\lambda) 
		&
		 \ge V(\tilde z_\lambda) +U(\tilde z_\lambda)
			- \frac{\alpha \epsilon^2}{8}
		\\
		& 
		\ge V(z)+ U(z) +\frac{\alpha}{2} \norma*{z-\tilde z_\lambda}^2 
			- \frac{\alpha \epsilon^2}{8} \, .
	\end{align*}
	This implies $\norma{z-\tilde z_\lambda}\le \frac{\epsilon}{2}$. 
	Since 
		$\norma*{z_\lambda - \tilde z_\lambda}
			\le \delta 
			< \frac{\epsilon}{2}$,
	we obtain the desired result.
	\end{proof}

\begin{appendix}
	\section{\texorpdfstring{$L^p$}{Lp}-transport information inequalities}\label{app-p-transp-inf} 
	
	Let $\mu,\nu = e^{-H} \mu \in L^1_+(\R^d)$  be probability densities. 
	In this section, we assume for simplicity that 
	$H\in C^1(\R^d)$. 
	For $p \in [1, \infty)$ 
	we consider the $L^p$-relative Fisher information $\fisp{\nu}{\mu}$ defined by
	\begin{equation}
		\fisp{\nu}{\mu} 
		= 
		\int_{\R^d} 
			\norma*{\nabla H}^p 
		\dd \nu 
		= 
		\int_{\R^d} 
			\norma*{\nabla \log 
		\Bigl(\frac{\ddd\nu}{\ddd\mu}\Bigr)}^p 
		\dd \nu \,.
	\end{equation}
	Note that $\fistwo{\nu}{\mu}$ 
	is the classical relative Fisher information, 
	while the $L^\infty$-relative Fisher information can be recovered in the limit:
		$\bigl(
				\fisp{\nu}{\mu}
		\bigr)^{1/p} \to \fisinf{\nu}{\mu}$
	as $p \to \infty$.
	
	The following result is the $L^p$-version of Theorem \ref{thm:funct-ineq}.
	
	\begin{theorem}
		\label{thm:p-transp-inf}
		Let $\mu \in L_+^1(\R^d)$ be a $\kappa$-log-concave probability density for some $\kappa > 0$.
		Then the $p$-\emph{transport-information inequality}
		\begin{equation}
			\label{eq:p-transp-inf}
			W_{p} (\mu,\nu) \le \frac{1}{\kappa} \, \tond*{\fisp{\nu}{\mu}}^\frac{1}{p} 
		\end{equation}
		holds for all probability densities 
		$\nu = e^{-H} \mu \in L_+^1(\R^d)$
		with $H\in C^1(\R^d)$.
	\end{theorem}
	
	\begin{proof}
	We assume that $\fisp{\nu}{\mu} <\infty$, since otherwise there is nothing to prove. 
	Moreover, we assume that $p>1$, noting that the case $p=1$ follows from by passing to the limit $p \to 1$.
	The proof is an adaptation of the proof of Theorem \ref{thm:main-abstract-coupl} with an additional approximation argument.
		
	\smallskip
	
	\textit{Step 1.} \
	Suppose first that $\mu = e^{-U}$ for some 
	$U \in C^2(\R^d)$   
	such that $\nabla U$ is Lipschitz,
	and that $H\in C^1(\R^d)$ is also Lipschitz.
	As in the proof of Theorem \ref{thm:main-W-infty},
	there exists a unique strong solution to the following system of SDEs, driven by \emph{the same Brownian motion} $B_t$, for all times $t \geq 0$:
	\begin{align}
		\dd X_t 
		& = 
		-\nabla U(X_t) \dd t 
		+
		\sqrt{2} \dd B_t \, , \quad 
		& X_0 & \sim \nu\, ,
		\\
		\dd Y_t 
		&	= 
		-\nabla U(Y_t) \dd t 
		- \nabla H(Y_t) \dd t 
		+
		\sqrt{2} \dd B_t\, , 
		\quad 
		& Y_0 & = X_0\,.
	\end{align}
	Subtracting these equations in their integral form
	and setting
	$Z := X - Y$, we 
	infer that $t \mapsto Z_t$ is differentiable and
	\begin{align*}
		\frac{1}{p} \ddt  |Z_t|^p
		& =  \norma*{Z_t}^{p-2}
			\graf*{- \bip{X_t-Y_t,
				\nabla U (X_t) - \nabla U(Y_t) }
		+ \bip{Z_t, \nabla H(Y_t)}}
		\\
		& \leq
		-\kappa\abs*{Z_t}^p + \norma*{Z_t}^{p-1}\,\norma*{\nabla H(Y_t)} \, .
	\end{align*}
	It follows that $M_T := \sup_{t \in [0,T]}|Z_t|$ can be bounded by a deterministic constant depending only on $\kappa$, $T$, and the Lipschitz constant of $H$.
	Moreover, 
	\begin{align*}
		\frac{1}{p} \ddt \Bigl( e^{\kappa p t} |Z_t|^p \Bigr)
		= 
		e^{\kappa p t} 
		\Bigl(
			\frac{1}{p} \ddt  |Z_t|^p
			+
			\kappa\abs*{Z_t}^p
		\Bigr)
		\leq
		e^{\kappa p t} | Z_t|^{p-1} \,|\nabla H(Y_t)| \, .
	\end{align*} 
	Integrating this inequality yields
	\begin{align*}
		e^{\kappa p T} |Z_T|^p
		\leq
			p \int_0^T 
				e^{\kappa p t} |Z_t|^{p-1}\, |\nabla H(Y_t)| 	
			\dd t \, ,
	\end{align*}
	hence, using Fubini's theorem and H\"older's inequality,
	\begin{align*}
		 \E |Z_T|^p
		\leq
		p \int_0^T e^{-\kappa p (T-t)} 
			\expe{\norma*{Z_t}^{p}}^{\frac{p-1}{p}} 		
			\, 
			\expe{\norma*{\nabla H(Y_t)}^{p}}^{\frac{1}{p}} 		
			\dd t \, .
	\end{align*}
	Since $Y_t \sim \nu$, we have $\fisp{\nu}{\mu} =\expe{\norma*{\nabla H(Y_t)}^{p}}$. Consequently, we obtain
	\begin{align*}
		\E |Z_T|^p
		\leq 
		\frac{1}{\kappa}
		\fisp{\nu}{\mu}^{\frac{1}{p}} 
		\sup_{0 \leq t \leq T} 
		\expe{\norma*{Z_t}^{p}}^{\frac{p-1}{p}} 		
		\, ,
	\end{align*}
	and therefore, for all $t \geq 0$,
	\begin{align*}
		W_p(\law(X_t), \law(Y_t))
		\leq 
		\bigl( \E |Z_t|^p\bigr)^{\frac{1}{p}}
		\leq 
		\frac{1}{\kappa}
		\fisp{\nu}{\mu}^{\frac{1}{p}}  \, .
	\end{align*}
	The conclusion follows by letting $t\to \infty$ and the joint lower semicontinuity of $W_p$ with respect to weak convergence.

	\smallskip
	\textit{Step 2.} 
	We now remove the extra assumptions on $\mu$, as in Step 2 of the proof of Theorem \ref{thm:main-abstract-coupl}.
		To this end, set 
	$\mu_n = \mu * \gamma_{\frac{1}{n}I_d}$ 
	and define the probability density 
	$\nu_n 
	\propto e^{-H}\mu_n$.
	Note that $U_n = - \log \mu_n$ is smooth
	with
	\[
			\kappa_n I_d \preccurlyeq \hess U_n \preccurlyeq nI_d			
	\]
	and $\kappa_n \coloneqq \tond*{\frac{1}{\kappa} + \frac{1}{n}}^{-1}$
	by Lemma \ref{lem:convolution} and \ref{lem:upp-bound-hess-pot-gauss-conv}.
	Therefore, we are in a position to apply Step 1 and we obtain the bound 
	$W_p(\mu_n, \nu_n) \le \frac{1}{\kappa_n} \tond*{\fisp{\nu_n}{\mu_n}}^{\frac{1}{p}}$.
	Note that $\mu_n \to \mu$ weakly and by Lemma \ref{lem:strong-weak-conv-log-conc-gauss-conv},
	$\nu_n \to \nu$ weakly too. 
	Hence, since $\nabla H \in C_b(\R^d)$ by assumption, it follows that
	\[
			\fisp{\nu_n}{\mu_n} = \int_{\R^d} \norma*{\nabla H}^p \dd \nu_n \to \int_{\R^d} \norma*{\nabla H}^p  \dd\nu = \fisp{\nu}{\mu}\, .
	\]
	The desired conclusion follows by letting $n\to \infty$ and by the joint lower semicontinuity of $W_p$ with respect to weak convergence.

	\smallskip

	\textit{Step 3.} 
	In this step we remove the additional requirement that $H \in C^1(\R^d)$ is Lipschitz, but we assume instead that it is bounded.
	For $R>0$, let $\phi_R\colon \R^d\to [0,1]$ be a smooth function such that
	\[
	\phi_R (x)  = 
			\begin{cases}
			 1 & \text{ if } \norma{x} \leq R
			\\
			0 & \text{ if }  \norma{x} \ge R +1
		\end{cases}
	\]
	and $\norma*{\nabla \phi_R} $ is uniformly bounded by a constant independent of $R$.
	We consider the functions $H_R \coloneqq \phi_R H$ and the probability measures $\nu_R \propto e^{-H_R} \mu$. 
	Note that
	\[
	{\nabla H_R} = \phi_R \nabla H + \nabla \phi_R H\, , 
	\]
	which implies that $H_R$ is Lipschitz. 
	Hence, by the previous step, 
	\begin{equation}\label{eq:p-trans-in-step-boundH}
		W_p\tond{\mu, \nu_R} \leq \frac{1}{\kappa}\tond*{\fisp{\nu_R}{\mu}}^{\frac{1}{p}} \, .
	\end{equation}
	Notice also that for all $f\in C_b(\R^d)$ we have by the dominated convergence theorem that 
	\[
		\int f e^{-H_R} \dd\mu \to \int f e^{-H} \dd\mu  = \int f \dd\nu
	\]
	as $R\to \infty$, which implies that $\nu_R\to \nu$ weakly.
	 Moreover, 
	 using that $|\nabla H_R| \leq C(1 + |\nabla H|)$ for some constant $C < \infty$ not depending on $R$,
	 another application of the dominated convergence theorem yields
	\[
		\int \norma*{\nabla H_R}^p e^{-H_R} \dd\mu 
		\to \int \norma*{\nabla H}^p e^{-H} \dd\mu 
		= \fisp{\nu}{\mu}\, .  
	\]
	 The desired conclusion follows by passing to the limit $R\to \infty$ in \eqref{eq:p-trans-in-step-boundH}.

	\smallskip
	\textit{Step 4.} Finally, we remove the assumption that $H$ is bounded. 
	To this end, let $j\colon\R \to [0\,\infty)$ be a smooth symmetric mollifier supported in $[-1,1]$. 
	For an integer $n\ge 2$, consider the function 
		$\phi_n(x) \coloneqq \graf*{\quadr*{(\cdot)\wedge n\vee (-n)} *j}(x)$.
	Note that $\phi_n$ is smooth, non-decreasing, $1$-Lipschitz and such that $\abs{\phi_n(x)} \le \abs{x}$ and
       \begin{equation}
        \phi_n(x) = \begin{cases}
             -n &\text{ if } x \leq -(n+1),
            \\
            x &\text{ if } |x| \leq  n-1,
            \\
            n &\text{ if } x \geq n+1.
        \end{cases}
    \end{equation}
    Then, define the function 
    $H_n \coloneqq \phi_n \circ H$ and the probability density $\nu_n \propto e^{-H_n} \mu$. Note that $H_n$ converges pointwise to $H$ as $n \to \infty$. Moreover, $e^{-H_n} \leq 1 + e^{-H} \in L^1(\mu)$. By dominated convergence, we have that $e^{-H_n}\to e^{-H}$ in $L^1(\mu)$, which implies that $\nu_n$ to $\nu$ weakly.
    Note also that $H_n$ is bounded, and so by the previous step we have that
    \[
        W_p(\nu_n, \mu) \leq \frac{1}{\kappa} \, 
		\bigl(\fisp{\nu_n}{\mu}\bigr)^\frac{1}{p} \, .
    \]
    The desired conclusion follows by letting $n\to \infty$ if we show that 
    \[
        \limsup_{n\to \infty} \int_{\R^d} 
			\norma*{\nabla H_n}^p e^{-H_n} \dd\mu 
			\leq
		\int_{\R^d} \norma*{\nabla H}^p e^{-H} \dd\mu\, .
    \]
    To this end, notice first that $\norma*{\nabla H_n} = \norma*{\phi_n'(H)\nabla H} \leq \norma*{\nabla H}$.
    Hence, we have
    \begin{equation*}
        \begin{split}
            \int_{\R^d} 
				\norma*{\nabla H_n}^p e^{-H_n} 
			\dd \mu 
		& \leq 
			\int_{\R^d} \norma*{\nabla H}^p e^{-H_n}
				\indic_{H^{-1}([-n-1,n+1])} 
			\dd \mu 
        \\
        & \leq 
			\int_{\R^d}
				\norma*{\nabla H}^pe^{-H} 
			\dd \mu 
			+ e^2\int_{\R^d} 
				\norma*{\nabla H}^p e^{-H} 
					\indic_{H^{-1}([n-1,n+1])} 
				\dd \mu \, .
        \end{split}
    \end{equation*}
    The desired conclusion then follows since $ \int_{\R^d} \norma*{\nabla H}^p e^{-H} \indic_{H^{-1}([n-1,n+1])} \dd \mu \to 0$ as $n\to \infty$ by dominated convergence.
	\end{proof}

\end{appendix}

	\subsection*{Acknowledgement} { 
	This research was funded in part by the Austrian Science Fund (FWF)
	project \href{https://doi.org/10.55776/F65}{10.55776/F65} and the 
	Austrian Academy of Science, DOC fellowship nr.~26293. 
	}

	\small

	\bibliographystyle{my_alpha}
	\bibliography{Fisher}
	
\end{document}